\documentclass[11 pt]{amsart}
\usepackage[margin=1.0in]{geometry}

\usepackage[T1]{fontenc}
\usepackage[utf8]{inputenc}
\usepackage{lmodern}
\usepackage{hyperref, amsmath, amssymb, mathtools}
\usepackage{verbatim}
\usepackage{amsthm}
\usepackage{amsfonts}
\usepackage{amsmath}
\usepackage{mathrsfs}  
\usepackage{comment}
\usepackage{chngcntr}
\usepackage{blkarray}
\usepackage{tikz-cd}
\usepackage{ctable}

\newtheorem{thm}{Theorem}
\newtheorem*{thm*}{Theorem}
\newtheorem{lemma}[thm]{Lemma}

\newtheorem{conjecture}[thm]{Conjecture}

\newtheorem*{prop*}{Proposition}

\theoremstyle{definition}

\theoremstyle{remark}
\newtheorem{remark}[thm]{Remark}
\numberwithin{thm}{section}
\numberwithin{equation}{section}

\DeclareMathOperator*{\E}{\mathbb{E}}

\newcommand{\Z}{\mathbb{Z}}
\newcommand{\F}{\mathbb{F}}
\newcommand{\N}{\mathbb{N}}
\newcommand{\C}{\mathbb{C}}

\newcommand{\calP}{\mathcal{P}}

\newcommand{\fS}{\mathfrak{S}}
\newcommand{\scrP}{\mathscr{P}}

\DeclareMathOperator{\siegel}{Siegel}
\DeclareMathOperator{\cramer}{Cram\'er}
\DeclareMathOperator{\vol}{Vol}

\usepackage{xcolor}

\newcommand{\Comm}[1]{{\color{olive}{$\blacktriangleright$Comment: #1}}}

\title[The distribution of prime values of random polynomials]{The distribution of prime values of random polynomials}

\author[Kravitz]{Noah Kravitz}
\address{St John's College, Oxford and Mathematical Institute, University of Oxford; St Giles', Oxford OX1 3JP, UK}
\email{noah.kravitz@maths.ox.ac.uk}

\author[Woo]{Katharine Woo}
\address{Stanford University, 450 Serra Mall, Stanford, CA 94306}
\email{katywoo@gmail.com}

\author[Xu]{Max Wenqiang Xu} 
\address{Courant Institute of Mathematical Sciences, 251 Mercer Street, New York 10012, USA}
\email{maxxu1729@gmail.com}

\begin{document}
\maketitle

\begin{abstract}
The Bateman--Horn Conjecture predicts how often an irreducible polynomial $f(x) \in \mathbb{Z}[x]$ assumes prime values. We demonstrate that with sufficient averaging in the coefficients of $f$ (viz. exponential in the size of the inputs), one can not only prove Bateman--Horn results on average but also pin down precise information about the distribution of prime values.  We show that 100\% of polynomials (in an $L^k$ sense for all $k \in \mathbb{N}$) satisfy the Bateman--Horn Conjecture, and that that 100\% of polynomials (in an $L^2$ sense) satisfy an appropriate polynomial analogue of the Hardy--Littlewood Prime Tuples Conjecture.  We use the latter to prove that 100\% of polynomials satisfy the appropriate analogue of the Poisson Tail Conjecture, in the sense that the distribution of the gaps between consecutive prime values around the average spacing is Poisson.

We also study the frequencies of sign patterns of the Liouville function evaluated at the consecutive outputs of $f$; viewing $f$ as a random variable, we establish the limiting distribution for every sign pattern. The Chowla problem along random polynomials is a special case. 
A key input behind all of our arguments is Leng's recent quantitative work on the higher-order Fourier uniformity of the von Mangoldt and M\"obius functions (in turn relying on Leng, Sah, and Sawhney's quantitative inverse theorem for the Gowers norms).
\end{abstract}

\section{Introduction}

\subsection{Prime values along (random) polynomials}
A notorious number-theoretic problem is determining whether a given polynomial $f\in \mathbb{Z}[x]$ assumes prime values at infinitely many positive integer inputs. It is clear that $f$ fails to achieve prime values if it factors, if it is uniformly zero modulo some prime (e.g., $x^2+x+2$ is always even), or if it is eventually always negative.  In 1854, Bunyakovsky \cite{bunjakovskij1854diviseurs} conjectured that these are the only three things that can go wrong: If an irreducible polynomial has positive leading coefficient and its outputs have no common factor, then it should be prime for infinitely many integer inputs.  Dirichlet's Theorem on primes in arithmetic progressions proves this conjecture for linear polynomials.  There is no non-linear polynomial for which the conclusion of Bunyakovsky's Conjecture is known to hold.

In 1962, Bateman and Horn \cite{BatemanHorn} refined Bunyakovsky's Conjecture by  predicting how frequently such prime values should occur. Using quasirandomness heuristics for the distribution of primes, they predicted that if $f(x)\in \Z[x]$ is an irreducible polynomial, then 
\begin{equation}\label{eq: Bateman Horn statement}
    \sum_{n\leq X} \Lambda(f(n)) \sim \fS_f X,
\end{equation}
where $\Lambda$ denotes the generalized von Mangoldt function 
\begin{equation*}
    \Lambda(n) := \begin{cases}
        \log(p), & \text{if } n = \pm p^k \text{ for some prime }p;\\ 
        0, & \text{ otherwise}
    \end{cases}
\end{equation*}
and $\fS_f$ denotes the singular series 
\begin{equation*}
    \fS_f := \prod_{p} \left(\frac{p^{-1} \#\{x\in \F_p: f(x) \in \F_p^\times\}}{1-1/p}\right).
\end{equation*}
Using the same heuristics, they also predicted the asymptotic frequency with which several irreducible polynomials \emph{simultaneously} achieve prime values; the Hardy--Littlewood Prime Tuples Conjecture is a special case.  The Bateman--Horn Conjecture, even more than Bunyakovsky's Conjecture, seems far out of reach with current methods.

A related family of problems concerns sums of the Liouville function along polynomials.  Recall that the Liouville function $\lambda: \mathbb{N} \to \{-1,1\}$ is the completely multiplicative function that equals $-1$ at all of the primes.  Cancellation in sums of the Liouville function are closely related to the density of primes and the Riemann Hypothesis.  In 1965, Chowla \cite{Ch65} conjectured that 
\begin{equation*}
    \sum_{n\leq X} \lambda(f(n)) = o(X)
\end{equation*}
for any $f(x)\in \Z[x]$ that cannot be written as a constant times the square of another polynomial. The problem in general is still wide open, although there has been some recent progress in the case where $f$ is a product of linear factors. See the further discussion in Section~\ref{sec: sign}. 

Although we cannot prove the Bateman--Horn Conjecture or the Chowla Conjecture for particular polynomials $f$, we are able to describe what happens for a ``random'' $f$.  For our model, we fix $d \in \N$ and consider a uniformly random polynomial $f \in \Z[x]$ of degree at most $d$ with integer coefficients of size at most $H$.  With a sufficiently large amount of averaging in $f$, we can prove an ``almost-all'' version of the Bateman--Horn Conjecture, and we can precisely describe the lower-order fluctuations for the Chowla Conjecture.  Our averaged Bateman--Horn results also let us show that 100\% of polynomials $f$ satisfy a suitable polynomial analogue of the Poisson Tail Conjecture. 
The moral of this paper is that all of the usual ``prime number quasirandomness'' heuristics hold, and to great precision, if one includes enough averaging.  This can be viewed as weak evidence towards the original non-averaged versions of the conjectures.

The main technical idea, which we will expand on shortly, is that while the expression $a_0 + \hdots + a_d x^d$ is a degree-$d$ polynomial in $x$, it is \emph{linear} in $a_0,\hdots, a_d$ for each fixed $x$. In our proofs, we will be able to switch the order of summation so that the quantities of interest are expressed as average correlations of $\Lambda$ or $\lambda$ evaluated at such linear forms (for various $x$'s).  At this point the tools of higher-order Fourier analysis can be fruitfully deployed.  A particularly important input is Leng's work \cite{Leng} on the higher-order Fourier uniformity of the von Mangoldt and M\"obius functions, which combines the techniques of Green and Tao \cite{GT2,GT3,GT4} with recent quantitative improvements on the Green--Tao--Ziegler machinery \cite{green2012inverse} due to Leng~\cite{leng2023efficient} and Leng, Sah, and Swahney~\cite{leng2024quasipolynomial}. This idea also occurs in the work of Ter\"av\"ainen \cite{Teravainen:Chowla}; it is more potent in our work because of recent quantitative improvements that were not available at the time of \cite{Teravainen:Chowla}.

\subsection{Averaged Bateman--Horn}
The earliest averaged Bateman--Horn result is due to Skorobogatov and Sofos \cite{SkorobogatovSofos}, who in 2020 proved that asymptotically 100\% of fixed-degree polynomials satisfy \eqref{eq: Bateman Horn statement}; among the arithmetic applications of this breakthrough is the fact that as a positive proportion of generalized Ch\^atelet surfaces and a positive proportion of a family of conic bundles have rational points. Let us state the result of \cite{SkorobogatovSofos}.  Let $d \in \mathbb{N}$ and let $\delta>0$ be a small constant, and set $H=H(X):= \exp(X^\delta)$.  Let $\mathcal{P}(d,H)$ denote the set of integer-coefficient polynomials of degree at most $d$ where all coefficients have absolute value at most $H$.  Skorobogatov and Sofos showed that
\begin{equation*}
    \E_{f \in \mathcal{P}(d,H)} \left|\frac{1}{X}\sum_{n\leq X} \Lambda(f(n)) - \fS_f'\right| \ll \frac{1}{\sqrt{\log X}},
\end{equation*}
where $\fS_f'$ is a suitable truncated version of the singular series $\fS_f$.  It is desirable to prove such an inequality with $H$ growing more slowly in terms of $X$ (corresponding to less averaging).  Browning, Sofos, and Ter\"av\"ainen \cite{BrowningSofosTeravainen} later achieved this with $H:= X^c$ for $c>\frac{19d}{5}$ with polylogarithmic savings; their work also handles the setting where the coefficients of $f$ are constrained to certain combinatorial cubes. 
See \cite{BalestrieriRome} for further work in this direction.

This brings us to our first result, which provides more precise information in the regime of $H$ exponential in $X$.  We are able to control not only the first moment of the expression of interest but also all higher moments.

\begin{thm}\label{thm: moments of BH}
Let $d \geq 1$ be an integer, and let $0<\delta<1$ be a real. Set $H=H(X): = \exp(X^\delta)$. Then for every $k\in \N$, we have 
\begin{equation}\label{eq:average-BH-statement}
    \E_{f \in \mathcal{P}(d,H)} \left(\frac{1}{X}\sum_{n\leq X} \Lambda(f(n)) - \fS_f\left(\frac{\log X}{\log\log X}\right)\right)^k = O_{d,\delta,k}((\log X)^{-1+o(1)}),
\end{equation}
where the truncated singular series is given by 
\begin{equation*}
    \fS_f\left(\frac{\log X}{\log\log X}\right) := \prod_{p\leq (\log X)/ (\log\log X) } \left(\frac{p^{-1} \#\{x\in \F_p: f(x)\in \F_p^\times\}}{1-1/p}\right). 
\end{equation*}
\end{thm}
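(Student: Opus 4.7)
The plan is to expand the $k$-th moment into averages of products of $\Lambda$ along linear forms in the coefficient vector $a=(a_0,\dots,a_d)$, and then invoke Leng's quantitative higher-order Fourier uniformity for $\Lambda$ to evaluate these correlations. Writing $S(f):=\frac{1}{X}\sum_{n\le X}\Lambda(f(n))$ and $T(f):=\fS_f(Q)$ with $Q:=(\log X)/\log\log X$, the binomial theorem gives
\[\E_f(S(f)-T(f))^k=\sum_{j=0}^k\binom{k}{j}(-1)^{k-j}\frac{1}{X^j}\sum_{n_1,\dots,n_j\le X}\E_f\Bigl[\prod_{i=1}^j\Lambda(f(n_i))\cdot T(f)^{k-j}\Bigr].\]
Tuples with some $n_i=n_{i'}$ contribute only $O_k(X^{-1}(\log X)^{O_k(1)})$ via the trivial bound $\Lambda(n)\ll\log n\ll X^\delta$, so we may restrict to pairwise distinct $n_i$.

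For distinct $n_1,\dots,n_j$, the linear forms $L_i(a):=a_0+a_1n_i+\cdots+a_dn_i^d$ on $a\in[-H,H]^{d+1}$ are pairwise non-proportional (since the vectors $(1,n_i,\dots,n_i^d)$ are distinct), so $(L_1,\dots,L_j)$ has finite Green--Tao--Ziegler complexity bounded in terms of $k$ and $d$. Decomposing $a$ by residue class mod $P:=\prod_{p\le Q}p$, the function $T(f)^{k-j}$ is constant on each class, and applying Leng's quantitative Gowers-norm estimate for $\Lambda$ on each class yields
\[\E_{\substack{a\equiv\bar a\,(\bmod\,P)\\ |a|\le H}}\prod_{i=1}^j\Lambda(L_i(a))=\prod_{p\le Q}\beta_p(\bar a;\mathbf n)\cdot\prod_{p>Q}\beta_p(\mathbf n)+O_{k,d}((\log H)^{-c}),\]
where $\beta_p$ denote the Green--Tao--Ziegler local factors. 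Since $\log H=X^\delta$, Leng's error is $\ll X^{-c\delta}$, far smaller than the target $(\log X)^{-1+o(1)}$.

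The heart of the argument is matching the singular-series output of Leng's estimate with the explicit truncated $\fS_f(Q)$ factors. For each prime $p\le Q$ and a tuple $\mathbf n$ with pairwise distinct residues mod $p$, the factor $\beta_p(\bar a;\mathbf n)$ equals $\prod_{i=1}^j\frac{p\cdot\mathbf 1[p\nmid f_{\bar a}(n_i)]}{p-1}$; averaging this over $\bar a\pmod p$ and $\mathbf n\in[1,X]^j$ and combining with the mod-$p$ contribution to $T(\bar a)^{k-j}$ reproduces exactly the mod-$p$ factor of $\E_f T(f)^k$. Summing the alternating binomial then causes the main $\E_f T(f)^k$ contributions to telescope to zero, and what remains are the corrections from coincidences $n_i\equiv n_{i'}\pmod p$ for $p\le Q$, together with the primes-$>Q$ tail of the singular series; Mertens-type estimates show that both aggregate to at most $O(1/Q)=(\log X)^{-1+o(1)}$.

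The main obstacle will be the precise bookkeeping in this singular-series matching: one must verify that the Green--Tao--Ziegler local factors from Leng's estimate, after averaging over residues mod $P$ and over $\mathbf n$, align with $\fS_f(Q)^{k-j}$ to the exact order needed for the alternating sum to cancel the main term, and that the error from residue coincidences and from the singular-series tail really aggregates to only $(\log X)^{-1+o(1)}$ uniformly in $k$. The critical feature that makes this possible is that $\log H=X^\delta$ is polynomial in $X$, which makes Leng's error $X^{-c\delta}$ vastly smaller than any $(\log X)^{-C}$ scale and easily absorbs the combinatorial factor $X^j$ from the moment expansion.
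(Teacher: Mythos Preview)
Your approach is essentially the same as the paper's: binomial expansion, reduction to distinct $n_i$, splitting $f$ into residue classes modulo $P=\prod_{p\le Q}p$, applying the Leng/Green--Tao--Ziegler asymptotic on each class, and then observing that the main terms collapse via $\sum_j(-1)^j\binom{k}{j}=0$ to leave only an $O(1/Q)$ error from truncating the singular series. The singular-series matching you sketch is exactly the computation the paper carries out.

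There is, however, a genuine gap in your treatment of the collision terms. You write that tuples with some $n_i=n_{i'}$ contribute $O_k(X^{-1}(\log X)^{O_k(1)})$ ``via the trivial bound $\Lambda(n)\ll\log n\ll X^\delta$''. But this trivial bound gives a contribution of order
\[
X^{-j}\cdot O_k(X^{j-1})\cdot (X^\delta)^j\cdot \fS_f(Q)^{k-j}\ \ll\ X^{-1+j\delta}(\log\log X)^{O_k(1)},
\]
which is polynomial in $X$, not polylogarithmic; for $j=k$ and $\delta\ge 1/k$ it is not even $o(1)$. The paper's proof handles this step the same way but explicitly invokes ``$k\delta<1$ by assumption'' at precisely this point (a hypothesis stronger than the $\delta<1$ in the theorem statement). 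If you want the full range $0<\delta<1$, you cannot bound all $j$ copies of $\Lambda$ trivially: instead, if the tuple has $\ell<j$ distinct values, bound only the $j-\ell$ excess factors by $X^\delta$ and apply the linear-forms asymptotic to the remaining product over the $\ell$ distinct values. This yields a contribution of order $X^{-(j-\ell)(1-\delta)+o(1)}\le X^{-(1-\delta)+o(1)}$, which is acceptable for any $\delta<1$.
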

Let us pause to highlight a few aspects of this theorem statement; these remarks also apply to our later results.
\begin{enumerate}
    \item Recall that the heuristic \eqref{eq: Bateman Horn statement} pertains only to irreducible polynomials $f$.  It has been known since the work of van der Waerden \cite{van36} that a uniformly random polynomial $f \in \mathcal{P}(d,H)$ is irreducible with probability $1-\exp(-\Omega_d((\log H)/(\log\log H)))$.  Thus reducible polynomials $f$ make a negligible contribution to \eqref{eq:average-BH-statement}.  
    \item The truncated singular series is nonzero as long as there is no prime $p\leq (\log X)/(\log\log X)$ such that $f(x)\equiv 0 \bmod p$ for all $x$. For $p>d$, we have $f\equiv 0 \bmod p$ if and only if $p$ divides all of the coefficients of $f$, so our truncated singular series is nonzero for a positive proportion (depending on $d$) of polynomials $f\in \calP(d,H)$. 
 In particular, \eqref{eq:average-BH-statement} provides a meaningful statement about the $\fS_f \neq 0$ case.
    \item We truncate our singular series because of issues related to its absolute convergence; at the present, such a truncation seems necessary.  The authors of \cite{SkorobogatovSofos,BrowningSofosTeravainen} also truncate their singular series, but at different spots ($\log X$ and $\exp(\sqrt{\log X})$, respectively) from ours. We chose the truncation spot $\log X/\log\log X$ in order to make our computations cleaner.
    \item Our proof would still work if instead of averaging over all of $\mathcal{P}(d,H)$, we fixed all but two of the coefficients of $f$ and then averaged over just the last two coefficients.  The work of \cite{BrowningSofosTeravainen} has the same feature, even though their proof techniques there are quite different.
    \item There is no real difference between averaging over polynomials of degree exactly $d$ and averaging over polynomials of degree at most $d$, since only a $O(1/H)$-fraction of polynomials in $\mathcal{P}(d,H)$ have degree strictly smaller than $d$.
    \item We can obtain the same conclusion with $H(X)$ as large as around $\exp(X/(\log X)^{k+1})$. 
\end{enumerate}

\begin{remark}
 Browning, Sofos and Ter\"av\"ainen \cite{BrowningSofosTeravainen} proved a version of Theorem \ref{thm: moments of BH} in the (more interesting and difficult) polynomial regime where $X \in [H^c, 2H^c]$ for $0<c<19d/5$. This does not immediately imply our result in the exponential regime, due to the upper bound needed on $H$.
\end{remark}

\subsection{Averaged Prime Tuples and Poisson Tail}

Theorem~\ref{thm: moments of BH} pertained to the one-polynomial case of the Bateman--Horn Conjecture.  Our methods also are applicable to many instances with multiple polynomials.  We will focus on what might be termed the ``Hardy--Littlewood Prime Tuples Conjecture along polynomials'' because this case has interesting consequences for gaps between prime values of polynomials. Our averaged version goes as follows.

\begin{thm}\label{thm: BH prime tuple hybrid}
Let $d \geq 1$ be an integer and let $0<\delta<1$ be a real. Set $H=H(X): = \exp(X^\delta)$.  Let $1 \leq k < 1/\delta$ be an integer, and let $\alpha>0$ be a real.  Then for any distinct integers $\ell_1,\ldots,\ell_k$ of size $|\ell_i| \leq X^\alpha$, we have
\begin{equation*}
    \E_{f \in \mathcal{P}(d,H)} \left|\frac{1}{X}\sum_{n\leq X} \prod_{i=1}^k \Lambda(f(n+\ell_i)) - \fS_{f,\Vec{\ell}}\,\left(\frac{\log X}{\log\log X}\right)\right|^2 = O_{d,\delta,\alpha,k}((\log X)^{-1+o(1)}),
\end{equation*}
where the truncated singular series is given by
\begin{equation*}
    \fS_{f,\Vec{\ell}}\,\left(\frac{\log X}{\log\log X}\right) := \prod_{p\leq \log X/(\log\log X)} \left(\frac{p^{-1} \#\{x\in \F_p: f(x+\ell_i)  \in \F_p^\times ~ \forall 1\leq i\leq k\}}{(1-1/p)^k}\right).
\end{equation*}
\end{thm}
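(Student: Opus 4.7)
The plan is to follow the \emph{swap-and-linearize} template used to establish Theorem~\ref{thm: moments of BH}, now specialized to a second moment and a product of $k$ factors of $\Lambda$. Setting $M(f) := \frac{1}{X}\sum_{n\le X}\prod_{i=1}^k\Lambda(f(n+\ell_i))$ and $S(f) := \fS_{f,\vec\ell}(\log X/\log\log X)$, I expand
\begin{equation*}
\E_{f} |M(f)-S(f)|^2 = \E_{f} M(f)^2 \;-\; 2\,\E_{f}[M(f)S(f)] \;+\; \E_{f} S(f)^2,
\end{equation*}
so it suffices to show that both $\E_{f} M(f)^2$ and $\E_{f} [M(f)S(f)]$ agree with $\E_{f} S(f)^2$ up to error $O((\log X)^{-1+o(1)})$. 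Expanding the square and interchanging the order of summation gives
\begin{equation*}
\E_{f} M(f)^2 = \frac{1}{X^2}\sum_{n_1,n_2\le X}\E_{f}\prod_{i=1}^k \Lambda(f(n_1+\ell_i))\Lambda(f(n_2+\ell_i)),
\end{equation*}
and a parallel expansion holds for $\E_{f}[M(f)S(f)]$ after writing $S(f)$ as an average over the residues of $\ba \bmod p$ for $p\le \log X/\log\log X$.

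The decisive observation---and the linchpin of Theorem~\ref{thm: moments of BH}---is that $f(n+\ell) = \ba\cdot \bA(n+\ell)$ is \emph{linear} in $\ba = (a_0,\ldots,a_d)$, where $\bA(t) := (1,t,\ldots,t^d)$. Hence, for each fixed pair $(n_1,n_2)$, the inner expectation is a correlation of $\Lambda$ over $\ba \in [-H,H]^{d+1}$ evaluated at the system of $2k$ integer linear forms $\ba \mapsto \ba\cdot \bA(n_j + \ell_i)$ for $j \in \{1,2\}$, $1\le i\le k$. The coefficients of these forms have size $\le X^{d+O(\alpha)}$, while the averaging is over a box of side $2H = 2\exp(X^\delta)$. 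This puts us squarely in the regime of Leng's quantitative asymptotic for linear forms in primes \cite{Leng}, whose heart is the Leng--Sah--Sawhney quantitative inverse theorem for the Gowers norms; it evaluates each correlation as a product of truncated local factors with error $O((\log X)^{-1+o(1)})$. The hypothesis $k<1/\delta$ enters precisely to guarantee that Leng's quantitative bound, applied to $2k$ forms of logarithmic complexity $\lesssim d\log X$, still beats $1/\log X$ inside the window $\log H = X^\delta$.

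For a \emph{generic} pair $(n_1,n_2)$, the $2k$ forms decouple into two blocks of $k$ forms, so the resulting local density at each prime $p \le \log X/\log\log X$ is
\begin{equation*}
\frac{p^{-(d+1)}\,\#\{\ba \in \F_p^{d+1}:\ \ba\cdot \bA(n_j + \ell_i) \in \F_p^\times\ \forall i,j\}}{(1-1/p)^{2k}},
\end{equation*}
which is exactly the $\ba\bmod p$-average of the square of the local factor defining $S(f)$. Summing over $(n_1,n_2)$ and recognizing the total as $\E_{f} S(f)^2$ yields the required cancellation in the first term, and the identical calculation with only $k$ forms gives $\E_{f}[M(f)S(f)] = \E_{f} S(f)^2 + O((\log X)^{-1+o(1)})$. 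The remaining obstacles I foresee are (i) controlling the $O(X)$ non-generic pairs---where two of the $2k$ forms coincide or develop a linear relation modulo a small prime---whose contribution is $O(X^{-1}(\log X)^{O(k)}) = o(1)$ by the trivial bound $\Lambda \ll \log X$; and (ii) establishing that Leng's quantitative error is uniform across $(n_1,n_2,\vec\ell)$ for the permitted coefficient sizes. I expect (ii) to be the principal technical obstacle, and it is this uniformity requirement that forces the sharp constraint $k<1/\delta$.
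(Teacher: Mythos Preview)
Your overall architecture matches the paper's: expand the square, swap the order of summation, apply Leng's quantitative linear-forms result to the inner average over $f$, and separately dispose of the degenerate pairs $(n_1,n_2)$. However, there is a genuine gap in your treatment of the diagonal, and it stems from a misidentification of where the hypothesis $k<1/\delta$ enters.

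You write that the non-generic pairs contribute $O(X^{-1}(\log X)^{O(k)})$ ``by the trivial bound $\Lambda\ll\log X$.'' But $\Lambda$ is evaluated at $f(n+\ell_i)$, which has size up to $HX^{d+O(\alpha)}$, so the correct trivial bound is $\Lambda(f(n+\ell_i))\ll\log(HX^d)\asymp X^\delta$, not $\log X$. With this, bounding all $2k$ factors trivially gives only $O(X^{2k\delta-1})$, which is useless unless $k<1/(2\delta)$. The paper instead bounds only the $j$ repeated factors by $X^\delta$ and applies Theorem~\ref{thm:GT-input} (or a sieve) to the remaining $2k-j$ \emph{distinct} forms, arriving at the diagonal bound $\ll_{\delta,k,\varepsilon}X^{\delta k-1+\varepsilon}$. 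It is \emph{here}, and only here, that $k<1/\delta$ is invoked.

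Conversely, your claim that $k<1/\delta$ is forced by uniformity in Leng's error term is incorrect. Theorem~\ref{thm:james-strengthened} (the paper's adaptation of Leng) already allows linear-form coefficients up to $(\log N)^A$ for any fixed $A$ and delivers an error of $(\log N)^{-A}$ for any fixed $A$; since $\log H=X^\delta$ and the coefficients $(n+\ell_i)^j$ have size at most $X^{d+O(\alpha)}=(\log H)^{O_{d,\alpha,\delta}(1)}$, the off-diagonal error is $O_{d,\delta,\alpha,k}(X^{-B})$ for any $B$, with no constraint linking $k$ and $\delta$. So your anticipated ``principal technical obstacle'' (ii) is not an obstacle at all, while your dismissal of (i) as routine is precisely where the real work---and the real use of $k<1/\delta$---lies.
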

\begin{remark}\label{rem: diagonalization}
A standard diagonalization argument produces a function $\delta(X)=\delta_{d,\alpha}(X)$, tending very slowly to $0$ as $X \to \infty$, such that the following holds with $H(X):=\exp(X^{\delta(X)})$:  For any $k \in \mathbb{N}$ and any distinct integers $\ell_1<\cdots<\ell_k$ of size at most $X^\alpha$, we have
    \begin{equation*}
        \E_{f \in \mathcal{P}(d,H)}\left|\frac{1}{X}\sum_{n\leq X} \prod_{i=1}^k \Lambda(f(n+\ell_i)) - \fS_{f,\Vec{\ell}}\,\left(\frac{\log X}{\log\log X}\right)\right|^2 = O_{d,\alpha,k}((\log X)^{-99/100}).
    \end{equation*}
This inequality holds uniformly in the $\ell_i$'s for each fixed $k$ (but of course it is non-uniform in $k$).  This formulation is useful because it sets us up to study gaps between prime values assumed by $f$; we emphasize that the uniformity in the shifts $\ell_i$ will be important for this application.
\end{remark}

Gallagher \cite{Gallagher} and Montgomery and Soundararajan \cite{MontgomerySound} showed that if one assumes a strong form of the Hardy--Littlewood Prime Tuples Conjecture, then one can show that primes are distributed like Poisson random variables in short intervals and like Gaussian random variables in longer intervals.
Given a polynomial $f$, one can ask about the distribution of integer inputs on which $f$ achieves prime values; of course the general case of this problem is even more difficult than the case $f(x)=x$ that we just described.

By combining the approach of Gallagher with the (diagonalized version of) Theorem~\ref{thm: BH prime tuple hybrid}, we are able to establish \emph{unconditional} almost-all results for this polynomial prime gaps problem.  The setup is a bit delicate.  For $f$ a polynomial and $X,L$ natural numbers, define the counting function
\begin{equation*}
        P_f(X,L;k) := \#\{1 \leq x\leq X: f([x,x+L)) \textrm{ contains exactly $k$ primes}\}.
    \end{equation*}
We obtain a probability distribution $\rho_{f,X,L}$ on $\Z_{\geq 0}$ by setting $\rho_{f,X,L}(k):=X^{-1}P_f(X,L;k)$.
Since $f(X) \asymp HX^d$ for typical $f \in \mathcal{P}(d,H)$, the Poisson Tail Conjecture heuristic suggests that $\rho_{f,X,L}$ should be close to the Poisson distribution with mean $$\mathcal{L}:=\frac{\mathfrak{S}_f\left(w(X)\right) L}{\log(HX^d)},$$  where $w(X)$ is a suitable slowly-growing function and $\fS_f(w(X))$ is a truncated singular series as in Theorem~\ref{thm: moments of BH}.  We will show that this is indeed the case asymptotically for 100\% of polynomials $f \in \mathcal{P}(d,H)$. Note that $\fS_f(w)$ is nonzero with a positive probability; of course, on the positive-probability event that $\fS_f(w)$ does vanish, determining $\rho_{f,X,L}$ is trivial.

\begin{thm}\label{thm: Poisson distribution}
Let $d \geq 1$ be an integer, and let $\delta(X)=\delta_{d,1}(X)$ be the function from Remark~\ref{rem: diagonalization}.  Let $\mathcal{L}>0$.
Let $X_1, X_2, \ldots$ be a sequence of natural numbers satisfying $X_i \geq \exp(i^{1+\gamma})$ for some $\gamma>0$. Set $H_i:= \exp(X_i^{\delta(X_i)})$ and $w_i:=\delta(X_i)(\log X_i)/(\log \log X_i)^2$; for each $i \in \mathbb{N}$, sample $f_i$ uniformly at random from the polynomials $f \in \mathcal{P}(d,H_i)$ with $\mathfrak{S}_f(w_i) \neq 0$, and set $L_i:=\frac{\mathcal{L}\log(H_iX_i^d)}{\mathfrak{S}_{f_i}(w_i)}$.  Then with probability $1$, for all $k \in \mathbb{Z}_{\geq 0}$ we have 
    \begin{equation*}
       \lim_{i \to \infty} \frac{P_{f_i}(X_i,L_i;k)}{X_i}=\frac{e^{-\mathcal{L}}\mathcal{L}^k}{k!}.
    \end{equation*}
\end{thm}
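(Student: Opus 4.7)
The plan is to run a Gallagher-style method of moments and upgrade the averaged $L^2$ bounds from Theorem~\ref{thm: BH prime tuple hybrid} to almost-sure convergence via Borel--Cantelli. Since the Poisson distribution is determined by its moments and its $k$-th factorial moment equals $\mathcal{L}^k$, it suffices to show that for every fixed $k \in \N$ the $k$-th factorial moment of $\rho_{f_i,X_i,L_i}$ converges almost surely to $\mathcal{L}^k$; a countable intersection over $k$ then furnishes the claimed pointwise convergence of mass functions.

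For fixed $k$, I would expand the falling factorial $N_x(f)^{(k)} := N_x(f)(N_x(f)-1)\cdots(N_x(f)-k+1)$, where $N_x(f) := \#\{0 \leq j < L : f(x+j) \text{ prime}\}$, as a sum over ordered distinct $k$-tuples $\vec\ell \in [0,L)^k$:
\begin{equation*}
    \frac{1}{X}\sum_{x=1}^X N_x(f)^{(k)} = \sum_{\substack{\vec\ell \in [0,L)^k \\ \text{distinct}}} \frac{1}{X}\sum_{x=1}^X \prod_{i=1}^k \mathbf{1}[f(x+\ell_i) \text{ prime}].
\end{equation*}
Replacing each primality indicator by $\Lambda(f(x+\ell_i))/\log(HX^d)$ introduces negligible error: in our regime $H$ dominates $X^d$, so $\log|f(x+\ell_i)| = (1+o(1))\log(HX^d)$ for typical $x$, and the prime-power contribution is of strictly lower order. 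The diagonalized form of Theorem~\ref{thm: BH prime tuple hybrid} (via Remark~\ref{rem: diagonalization}, applied with $\alpha = 1$, valid since $L_i \leq X_i$ eventually) then approximates each inner correlation by $\mathfrak{S}_{f,\vec\ell}(w)$ with $L^2$ error $O((\log X)^{-99/100})$ uniformly in $\vec\ell$. Summing over the $\leq L^k$ tuples by Minkowski in $L^2$ and dividing by $\log^k(HX^d)$ yields an aggregated $L^2$ error of $O_k(\mathcal{L}^k \mathfrak{S}_f(w)^{-k}(\log X)^{-99/200})$ for the factorial moment.

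The surviving main term is $\log^{-k}(HX^d)\sum_{\vec\ell} \mathfrak{S}_{f,\vec\ell}(w)$, which I would evaluate via a polynomial analogue of Gallagher's singular-series identity. Since $L_i$ vastly exceeds $\prod_{p \leq w_i} p = \exp(O(w_i))$, the Chinese Remainder Theorem factors the average of $\mathfrak{S}_{f,\vec\ell}(w)$ over $\vec\ell \in [0,L)^k$ as a product of local averages, one per prime $p \leq w_i$; a short computation at each prime yields
\begin{equation*}
    \E_{\vec\ell \in \F_p^k}\!\left[\frac{p^{-1}\#\{x \in \F_p : f(x+\ell_i) \in \F_p^\times \ \forall i\}}{(1-1/p)^k}\right] = \left(\frac{p - z_p(f)}{p-1}\right)^k,
\end{equation*}
where $z_p(f) := \#\{x \in \F_p : f(x) \equiv 0 \bmod p\}$. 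Multiplying over $p \leq w_i$ telescopes to $\mathfrak{S}_f(w_i)^k$, with contributions from "colliding" shifts $\ell_i \equiv \ell_j \pmod p$ and from the discrepancy between $[0,L)^k$ and its CRT idealization being of lower order. Combined with $L = \mathcal{L}\log(HX^d)/\mathfrak{S}_f(w)$, the main term thus evaluates to $\mathcal{L}^k + o(1)$.

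Finally, Chebyshev and the growth hypothesis $X_i \geq \exp(i^{1+\gamma})$ yield $\Pr[|k\text{-th factorial moment}(f_i) - \mathcal{L}^k| > \epsilon] \ll_{k,\epsilon} (\log X_i)^{-99/100}$, summable in $i$ after slightly tightening the diagonalization exponent (since Theorem~\ref{thm: BH prime tuple hybrid} actually provides $(\log X)^{-1+o(1)}$, any $\gamma > 0$ suffices). Borel--Cantelli then gives almost-sure convergence for each $k$, and intersecting over $k \in \N$ concludes the proof. The main obstacle I anticipate is the singular-series averaging step: executing the local-average identity with sufficient uniformity in $f$ (while respecting the conditioning $\mathfrak{S}_{f}(w_i) \neq 0$), carefully handling the negligible contributions from colliding shifts and from the discrepancy between integer shifts in $[0, L)^k$ and the idealized CRT model, and consistently tracking the truncation parameter $w_i$ throughout.
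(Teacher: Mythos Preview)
Your approach is correct and follows the same overall strategy as the paper: a Gallagher-style method of moments combined with the averaged prime-tuples estimate and Borel--Cantelli. The paper works with ordinary moments $M_k(\rho_{f,X,L}) = \sum_r S(k,r)\mathcal{L}^r$ rather than factorial moments and packages the singular-series averaging as a standalone lemma (Lemma~\ref{lem:singular-series-for-M-moments}), but these are cosmetic differences; your factorial-moment formulation is arguably cleaner for the Poisson target.

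There is one point where your write-up is imprecise and where the paper's treatment differs substantively. You assert that ``summing over the $\leq L^k$ tuples by Minkowski in $L^2$'' gives an aggregated $L^2$ error of order $\mathcal{L}^k \mathfrak{S}_f(w)^{-k}(\log X)^{-99/200}$, but $L = L(f)$ and $\mathfrak{S}_f(w)$ are random in $f$, so an $L^2(f)$-bound cannot contain them; the real issue is that the \emph{index set} of your Minkowski sum is itself $f$-dependent. The fix in your framework is easy: since $\mathfrak{S}_f(w) \gg_d (\log w)^{1-d}$ on the event $\mathfrak{S}_f(w) \neq 0$ (Lemma~\ref{lem: lower bound truncated singular series}), there is a deterministic $L_{\max} \asymp \mathcal{L}(\log w)^{d-1}\log(HX^d)$ with $L(f) \leq L_{\max}$, and one may enlarge the outer sum to $\vec\ell \in [0,L_{\max})^k$ with the indicator $\mathbf{1}[\vec\ell \in [0,L(f))^k]\leq 1$ inside; Minkowski then yields a deterministic bound with $L_{\max}^k$ in place of $L(f)^k$, at the harmless cost of a factor $(\log w)^{k(d-1)}$. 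The paper handles this same $f$-dependence differently: it first proves the moment estimate at a \emph{fixed} $L$ (Theorem~\ref{thm:M}), applies it at a short net of deterministic values $L(0) > \cdots > L(\tau)$ with $\tau \asymp \log\log w$, takes a union bound over the net, and then sandwiches $L(f)$ between consecutive net points using the monotonicity of $M_k(\rho_{f,X,L})$ in $L$. Either route works here since the dynamic range $L_{\max}/L_{\min} \asymp (\log w)^d$ is only polylogarithmic.
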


\begin{remark}
The conclusion of this conjecture also implies the following weaker statement along the lines of what sometimes goes under the name of the Poisson Tail Conjecture.  For each $i$, let $n_{i,1}<\cdots<n_{i,R_i}$ be the integers $1 \leq n \leq X_i$ such that $f_i(n_{i,j})$ is prime.  Then for any $\lambda>0$, with probability $1$ we have
$$\#\{1 \leq j<R_i: |f(n_{i,j+1})-f(n_{i,j})| \geq \lambda \log (H_i X_i^d)\} \asymp e^{-\lambda} \frac{X_i}{\log (H_i X_i^d)}.$$

\end{remark}

In the regime where $\mathcal{L}$ tends to infinity, one would expect $\rho_{f,X,L}$ to be close to a Gaussian distribution with mean $\mathcal{L}$.  We can, once again, establish that this is the case asymptotically asymptotically 100\% of polynomials $f \in \mathcal{P}(d,H)$, for a reasonably wide range of parameters.

\begin{thm}\label{thm:primes-in-intervals-gaussian}
Let $d \geq 1$ be an integer, let $\alpha>0$, and let $\delta(X)=\delta_{d,\alpha}(X)$ be the function from Remark~\ref{rem: diagonalization}. Let $\mathcal{L}=\mathcal{L}(X)>0$ be a function tending to infinity with $X$ and satisfying $\log \mathcal{L}(X)=o(\log(\delta(X)(\log X)/(\log\log X)^2))$.
Let $X_1, X_2, \ldots$ be a sequence of natural numbers satisfying $X_i \geq \exp(i^{1+\gamma})$ for some $\gamma>0$, and and set $H_i:= \exp(X_i^{\delta(X_i)})$ and $w_i = \delta(X_i)(\log X_i)/(\log\log X_i)^2$; for each $i \in \N$, sample $f_i $ uniformly at random from the polynomials $f \in \mathcal{P}(d,H_i)$ with $\mathfrak{S}_f(w_i) \neq 0$, and set $p_i:=\frac{\mathfrak{S}_{f_i}(w_i)}{\log(H_iX_i^d)}$ and $L_i:=p_i^{-1}\mathcal{L}(X_i)$.  Then with probability $1$, for all $t \in \mathbb{R}$ we have 
    \begin{equation*}
       \lim_{i \to \infty} \frac{\sum_{k \leq p_i L_i+t\sqrt{(p_i-p_i^2)L_i}}P_{f_i}(X_i,L_i;k)}{X_i}=\frac{1}{\sqrt{2\pi}}\int_{-\infty}^t e^{-x^2/2} \,dx.
    \end{equation*}

\end{thm}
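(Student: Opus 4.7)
The plan is to run a method-of-moments argument analogous to the proof of Theorem~\ref{thm: Poisson distribution}, but now in the regime $\mathcal{L}(X_i)\to\infty$, so that the Poisson limit from the bounded-$\mathcal{L}$ case is itself replaced by its Gaussian limit. Since the standard Gaussian is moment-determined, it suffices to prove that almost surely, for each fixed integer $m\geq 0$,
\begin{equation*}
    \lim_{i\to\infty}\frac{1}{X_i}\sum_{x\leq X_i}\left(\frac{N_{f_i}(x,L_i)-p_iL_i}{\sqrt{(p_i-p_i^2)L_i}}\right)^m = \frac{1}{\sqrt{2\pi}}\int_{\R} t^m e^{-t^2/2}\,dt,
\end{equation*}
where $N_f(x,L):=\#\{n\in[x,x+L):f(n)\text{ prime}\}$.

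First I would expand the falling factorial $N_f(x,L)^{[k]}=\sum_{\vec{\ell}}\prod_{j=1}^k\mathbf{1}_{f(x+\ell_j)\text{ prime}}$, where $\vec{\ell}$ ranges over ordered $k$-tuples of distinct elements of $\{0,\dots,L-1\}$, and replace $\mathbf{1}_{f(n)\text{ prime}}$ with $\Lambda(f(n))/\log(H_iX_i^d)$; prime-power contributions are $O(\sqrt{X_i})$ and negligible for non-square $f$, a condition that fails for only a vanishing fraction of $f\in\mathcal{P}(d,H_i)$. The growth condition $\log\mathcal{L}(X_i)=o(\log(\delta(X_i)\log X_i/(\log\log X_i)^2))$ ensures $L_i\leq X_i^\alpha$ eventually, so the diagonalized form (Remark~\ref{rem: diagonalization}) of Theorem~\ref{thm: BH prime tuple hybrid} applies to each inner correlation $X_i^{-1}\sum_x\prod_j\Lambda(f_i(x+\ell_j))$. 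This replaces each correlation by $\fS_{f_i,\vec{\ell}}(w_i)$ up to an $L^2$-error (in $f_i$) of size $(\log X_i)^{-99/100}$, so the random-polynomial average of the $k$-th factorial moment is well-approximated by $(\log(H_iX_i^d))^{-k}\sum_{\vec{\ell}}\fS_{f_i,\vec{\ell}}(w_i)$.

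Next I would analyze this summed singular series. Its $p$-local factor depends only on $f_i\bmod p$ and $\vec{\ell}\bmod p$: when the residues of the shifts are all distinct modulo $p$ (which is automatic for $p>L_i$), the local factor at $p$ splits as a product of the $k$ single-polynomial local factors appearing in $\fS_{f_i}(w_i)$. The ``collision'' contributions come from the small primes $p\leq L_i$, which are at most $w_i$ in number and affect only a vanishing fraction of tuples. This yields $\sum_{\vec{\ell}}\fS_{f_i,\vec{\ell}}(w_i)=(\fS_{f_i}(w_i))^kL_i^k(1+o(1))$ with high probability in $f_i$, so the $k$-th factorial moment equals $\mathcal{L}(X_i)^k(1+o(1))$, matching the Poisson$(\mathcal{L}(X_i))$ distribution. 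A standard Stirling-number identity converts these factorial moments into normalized central moments, and combined with the fact that the central moments of Poisson$(\mathcal{L})/\sqrt{\mathcal{L}}$ converge to the Gaussian moments as $\mathcal{L}\to\infty$, this gives the desired moment limits in expectation.

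To upgrade in-expectation convergence to almost sure convergence along the sequence $(X_i)$, Markov's inequality on the $L^2$-bounds from Theorem~\ref{thm: BH prime tuple hybrid} yields a failure probability of order $(\log X_i)^{-\epsilon}$ for each fixed $k$ and threshold; the growth rate $X_i\geq\exp(i^{1+\gamma})$ makes this summable in $i$, so Borel--Cantelli applies. The main obstacle will be the singular-series averaging step: one must give a clean quantitative bound on the ``collision'' contributions where two or more $\ell_j$'s coincide modulo some prime $p\leq w_i$, and verify that they contribute only $o(L_i^k)$, uniformly in $k$ over the moments needed to resolve the Gaussian. The constraint on $\mathcal{L}(X)$ is precisely what is required to keep the residual errors negligible compared with the normalization $\mathcal{L}(X_i)^{m/2}$.
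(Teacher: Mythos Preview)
Your high-level strategy is right and matches the paper: compute factorial/raw moments via the averaged prime-tuples result, convert to centered rescaled moments via a Stirling-number identity (this is the content of Lemma~\ref{lem:poisson-to-gaussian-conversion}), and close with Borel--Cantelli. But there is one genuine gap and one place where your mechanism is off.

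\textbf{The main gap: dependence of $L_i$ on $f_i$.} You apply the $L^2$-bound from Remark~\ref{rem: diagonalization} ``to each inner correlation $X_i^{-1}\sum_x\prod_j\Lambda(f_i(x+\ell_j))$'' and then sum over $\vec{\ell}$ with entries in $\{0,\dots,L_i-1\}$. But the $L^2$-bound is an expectation over $f\in\mathcal{P}(d,H_i)$ for a \emph{fixed} tuple $\vec{\ell}$, while your summation range $[0,L_i)$ depends on $f_i$ through $\fS_{f_i}(w_i)$. You cannot swap the average over $f$ with the sum over $\vec\ell$, and a union bound over all possible tuples (up to $X_i^{\alpha k}$ of them) is far too expensive against a $(\log X_i)^{-99/100}$ error. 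The paper resolves this with a ``net'' argument already used in the proof of Theorem~\ref{thm: Poisson distribution}: using Lemmas~\ref{lem: upper bound on truncated singular series} and~\ref{lem: lower bound truncated singular series}, the possible values of $L_i$ lie in an interval of multiplicative width $D\ll(\log w_i)^d$; one lays down a geometric grid $L(0),\dots,L(\tau)$ with $\tau$ growing slowly, applies Theorem~\ref{thm:M} (which packages your factorial-moment computation) at each fixed $L(j)$, union-bounds over the $\tau+1$ grid points, and then sandwiches $M_k(\rho_{f_i,X_i,L_i})$ between its neighbors $M_k(\rho_{f_i,X_i,L(j)})$ and $M_k(\rho_{f_i,X_i,L(j+1)})$ using monotonicity of $M_k$ in $L$. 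Without some device of this sort your Markov/Borel--Cantelli step does not go through.

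\textbf{The singular-series step.} Your proposed mechanism---that the $p$-local factor of $\fS_{f,\vec\ell}(w)$ factorizes as $\prod_j(\text{local factor of }\fS_f(w))$ whenever the $\ell_j$ are distinct modulo $p$---is false (take $f(x)=x$, $k=2$). The correct argument, carried out in Lemma~\ref{lem:singular-series-for-M-moments}, is an exact averaging identity: summing $\#\{x\in\F_p:f(x+\ell_i)\in\F_p^\times\ \forall i\}$ over \emph{all} $\vec\ell\in\F_p^r$ gives $p\cdot\#\{y:f(y)\in\F_p^\times\}^r$ by Fubini, with no ``collision'' casework. The error term $O_r(L^{r-1+o(1)})$ comes only from rounding the count of $\vec\ell$ in each residue class modulo $P(w)$, not from coincidences of residues. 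You correctly flag this as the main obstacle, but the route you sketch would not lead to a proof.
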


\begin{remark}
In the classical setting of primes in short intervals, it is expected that $$\#\{1 \leq x\leq X: [x,x+L]\textrm{ contains exactly $k$ primes}\}=0$$
if $L$ is sufficiently large with respect to $k$ and $X$. 
 Moreover, this quantity should become harder to estimate when $L$ is larger relative to $k$ . Our work corresponds to the regime $L = \log X \cdot (\log\log\log X)^A$ for arbitrary $A>0$ in the classical setting.

If for a suitable choice of $w(X)$ one could replace the upper bound in Theorem~\ref{thm: BH prime tuple hybrid} with square-root cancellation, namely,
\begin{equation*}
    \E_{f\in \calP(d,H)}\left|\frac{1}{X}\sum_{n\leq X} \prod_{i=1}^k \Lambda(f(n+\ell_i)) - \fS_{f,\Vec{\ell}}\,(w(X))\right|^2 = O_{d,\delta,\alpha}(X^{-1/2+o(1)}),
\end{equation*}
then we could extend the range of $\mathcal{L}$ to $\mathcal{L}(X) \ll X^{A\delta(X)}$ for any $A>0$. This would correspond to $L = \log X \cdot (\log \log X)^A$ for $A>0$ in the classical setting. It may be of interest to note that Montgomery and Soundararajan \cite{MontgomerySound} work with a similar uniform square-root cancellation hypothesis in their study of the Gaussian range in the classical setting.  
\end{remark}

\subsection{Averaged polynomial Chowla}

We now turn our attention from the von Mangoldt function $\Lambda$ to the Liouville function $\lambda$.  In the Bateman--Horn-type problems above, we identified main terms (via singular series) in various averages of $\Lambda$.  In Chowla-type problems, the main term is typically zero, and the interesting problems concern the rate of decay and the ``shape'' of lower-order terms.  Instead of establishing ``almost-all'' results as we vary the polynomial $f$, we will view $f$ as a random variable and study the limiting distributions (over $f$) of various Liouville sums involving $f$.

The overarching theme of this subsection is that one should expect $\lambda$ to behave like a uniformly random $\{-1,1\}$-valued sequence, and one can compare its statistics to the corresponding statistics of a truly random $\{-1,1\}$-valued sequence.  For example, the sum of $N$ independent uniformly random $\pm 1$'s is typically of size $\asymp N^{1/2}$, and the limiting distribution of these fluctuations (after normalization) is Gaussian.

The first results in this direction were due to Browning, Sofos, and Ter\"av\"ainen \cite{BrowningSofosTeravainen} and Ter\"av\"ainen \cite{Teravainen:Chowla}; Wilson \cite{Wilson} later studied the higher moments of the expression $$\frac{1}{X^{1/2}}\sum_{n \leq X}\lambda(f(n)),$$ considered as a random variable depending on the random polynomial $f \in \mathcal{P}(d,H)$. Since the Gaussian distribution is determined by its moments, one can deduce that the random variable $X^{-1/2}\sum_{n \leq X}\lambda(f(n))$ converges to a Gaussian (as a distribution) if one knows that all of its individual moments converge to the corresponding moments of a Gaussian.  Wilson made progress towards this goal by using the circle method to show that indeed the first $d+1$ moments of $X^{-1/2}\sum_{n \leq X}\lambda(f(n))$ approach the first $d+1$ moments of a Gaussian of variance $1$.

We mentioned previously that one can view a collection of polynomials $\{a_0+\hdots+a_d n_i^d\}_{i=1}^t$ as a system of linear form in the variables $a_0,\hdots,a_d$.  The overdetermination of this system for $t>d+1$ limits Wilson's circle-method approach to the first $d+1$ moments.  By replacing Wilson's Fourier-analytic approach with higher-order Fourier-analytic tools, we are able to obtain all moments.  Let $C_k:=k!! \cdot {\bf 1}_{\text{$k$ even}}$ denote the $k$-th moment of the Gaussian of mean $0$ and variance $1$.

\begin{thm}\label{thm: moments of polynomial Chowla}
    Let $d \geq 1$ be an integer. Suppose $H=H(X)\geq \exp(X^\delta)$ for some real $\delta>0$.  Then for every $k\in \N$, we have 
\begin{equation*}
    \E_{f \in \mathcal{P}(d,H)} \left(\frac{1}{X^{1/2}}\sum_{n\leq X}\lambda(f(n))\right)^k = C_k + O_{d,\delta,k}(X^{-1}).
\end{equation*}
\end{thm}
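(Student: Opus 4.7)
The plan is to expand the $k$-th moment as
\[
\E_f \left(X^{-1/2} \sum_{n \leq X} \lambda(f(n))\right)^k = X^{-k/2} \sum_{n_1, \ldots, n_k \leq X} \E_f \prod_{i=1}^k \lambda(f(n_i)),
\]
and group the $k$-tuples $(n_1, \ldots, n_k)$ by the set-partition $\pi$ of $\{1, \ldots, k\}$ recording which of the $n_i$'s coincide. For a partition $\pi$ with $r$ blocks of sizes $s_1, \ldots, s_r$ and distinct representatives $m_1, \ldots, m_r$, the integrand factors as $\prod_{t=1}^r \lambda(f(m_t))^{s_t}$, so only the parities $s_t \bmod 2$ matter; the measure-zero event $f(m_t)=0$ contributes $O_k(X/H) = O_k(X^{-1})$ after summing, and $\lambda$ at negative integers is handled by the standard convention $\lambda(-n) = \lambda(n)$.

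The Gaussian main term $C_k$ comes entirely from pair partitions (all blocks of size exactly $2$, which is possible only for $k$ even): each such partition has integrand identically $1$, and the $(k-1)!!$ pair partitions combined with the combinatorial factor $X(X-1)\cdots(X-k/2+1)/X^{k/2} = 1 + O_k(X^{-1})$ yield $C_k + O_k(X^{-1})$. Any all-even partition with at least one block of size $\geq 4$ has $r \leq k/2 - 1$, contributing $O_k(X^{r - k/2}) = O_k(X^{-1})$. It remains to control partitions with at least one odd block: for such $\pi$ with $j \geq 1$ odd blocks and representatives $m_1, \ldots, m_j$ of these blocks, after trivially summing the even-block representatives one must show
\[
X^{r - j - k/2} \left|\sum_{\substack{m_1, \ldots, m_j \leq X \\ \text{distinct}}} \E_f \prod_{t=1}^j \lambda(f(m_t))\right| = O_k(X^{-1}).
\]

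Writing $f(x) = \sum_{i=0}^d a_i x^i$, the inner expectation is the average of $\lambda$ at the linear forms $L_{m_t}(a) := a_0 + a_1 m_t + \cdots + a_d m_t^d$ on $[-H, H]^{d+1}$; these forms are pairwise distinct (and any $\min(j, d+1)$ of them are linearly independent by Vandermonde) because the $m_t$'s are distinct. A generalized von Neumann argument in the Green--Tao--Ziegler mould bounds this average by $\|\lambda\|_{U^{s^*}[N]}$ for some Gowers order $s^* = s^*(d, k)$ and interval length $N = O(HX^d)$, and Leng's quantitative higher-order Fourier uniformity then gives $\|\lambda\|_{U^{s^*}[N]} \ll_{d, k} (\log N)^{-c}$ for some $c = c(d, k) > 0$. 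Since $\log N \asymp X^\delta$ under the hypothesis $H \geq \exp(X^\delta)$, this translates to polynomial-in-$X$ savings $O(X^{-c\delta})$ uniform over distinct $m_t$'s. Summing over the $O(X^j)$ tuples and using $r \leq (k+j)/2$ produces total contribution $O(X^{O_k(1) - c\delta})$, which beats $X^{-1}$ once $c\delta$ is taken large enough, achievable by invoking Leng's bound at a high enough Gowers order depending on $k$ and $\delta$.

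The main obstacle is precisely this quantitative input: Leng's polylog-in-$H$ decay must, under the exponential-height assumption, beat the $X^{k/2}$ trivial count of tuples for every $k$. This is where the recent quantitative inverse-theorem machinery of \cite{Leng, leng2023efficient, leng2024quasipolynomial} is essential, since classical circle-method tools only suffice for systems of Cauchy--Schwarz complexity at most $d+1$ --- the reason Wilson's Fourier-analytic approach was limited to the first $d+1$ moments.
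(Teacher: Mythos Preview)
Your approach is essentially the paper's: expand the moment, group tuples by their coincidence pattern, extract $C_k$ from the all-even (pair-partition) contributions, and kill the odd-block contributions via the Liouville case of Theorem~\ref{thm:GT-input} (Leng's uniformity). One small correction to your last paragraph: you suggest that the required savings are ``achievable by invoking Leng's bound at a high enough Gowers order depending on $k$ and $\delta$,'' but the Gowers order $s^*$ is fixed by the complexity of the linear system $\{L_{m_t}\}_{t\le j}$ (so by $j\le k$), and you cannot simply raise it; what actually happens is that Leng's bound at that \emph{fixed} order already gives $\|\lambda\|_{U^{s^*}[N]}\ll_{s^*,A}(\log N)^{-A}$ for \emph{every} $A>0$, and one just takes $A$ large enough in terms of $k,\delta$ --- exactly as the paper does when it converts $(\log H)^{-A}$ to $X^{-B}$.
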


An immediate corollary is that $X^{-1/2}\sum_{n \leq X}\lambda(f(n))$ converges in distribution to a Gaussian of mean $0$ and variance $1$, i.e.,
$$\lim_{X \to \infty} \Pr_{f \in \mathcal{P}(d,H)}\left( \frac{1}{X^{1/2}}\sum_{n \leq X}\lambda(f(n)) \leq t \right)=\frac{1}{\sqrt{2\pi}}\int_{-\infty}^t e^{-x^2/2} \,dx$$
for all $t \in \mathbb{R}$.

Theorem~\ref{thm: moments of polynomial Chowla} is a special case of a more general result about sign patterns of the Liouville function sampled along a random polynomial.  Recall that the sign-pattern instance of the Chowla Conjecture asserts that the sequence $\lambda(n+1), \ldots, \lambda(n+s)$ assumes each of the $2^s$ possible sign patterns for $(1+o(1))2^{-s}X$ values of $1 \leq n \leq X$.  For a polynomial $f$ that is not a constant times a square of another polynomial, one can make the analogous conjecture about the sign patterns of the sequence
$$\lambda(f(n+1)), \lambda(f(n+2)), \ldots, \lambda(f(n+s)).$$
As usual, we are able to address this conjecture only on average; as in Theorem~\ref{thm: moments of polynomial Chowla}, we show not only that the main term is as expect but also that the lower-order fluctuations of these counts converge to Gaussians.

\begin{thm}\label{thm: sign patterns}
 Let $d \geq 1$ be an integer. Suppose $H=H(X)\geq \exp(X^\delta)$ for some $\delta>0$.  Let $\Vec{\epsilon}=(\epsilon_1,\hdots,\epsilon_s) \in \{-1,1\}^s$ be a sign pattern. Then for every $k\in \N$, we have 
    \begin{equation*}
        \E_{f \in \mathcal{P}(d,H)}\left(\frac{\#\{n\leq X: (\lambda(f(n+1)),...,\lambda(f(n+s))) = \Vec{\epsilon}~\}-2^{-s}X}{X^{1/2}}\right)^k  = \sigma(\Vec{\epsilon})^k C_k + O_{d,\delta,s,k}(X^{-1}),
    \end{equation*}
    where the standard deviation $\sigma(\Vec{\epsilon}) \geq 0$ is given by
$$\sigma(\Vec{\epsilon})^2=4^{-s} \left( \sum_{\substack{\emptyset \neq T_1,T_2 \subseteq [s]\\ \text{translates}}} \prod_{i \in T_1} \epsilon_i \cdot \prod_{i \in T_2} \epsilon_i  \right).$$ 
\end{thm}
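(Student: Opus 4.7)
The plan is to expand the sign-pattern indicator and compute the $k$-th central moment by a combinatorial pairing argument in the style of the moment-method proof of the central limit theorem, using a quantitative Liouville cancellation estimate (of the sort that already underlies Theorem~\ref{thm: moments of polynomial Chowla}) to control the off-diagonal contributions.

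First I would write
$$\mathbf{1}[\lambda(f(n+i))=\epsilon_i\ \forall i\in[s]] \;=\; 2^{-s}\sum_{T\subseteq[s]}\Bigl(\prod_{i\in T}\epsilon_i\Bigr)\prod_{i\in T}\lambda(f(n+i)),$$
so that the $T=\emptyset$ term produces exactly the subtracted main term $2^{-s}X$. Setting $S_T(f):=\sum_{n\le X}\prod_{i\in T}\lambda(f(n+i))$, the $k$-th moment of $X^{-1/2}(\#\{\cdots\}-2^{-s}X)$ becomes
$$2^{-sk}X^{-k/2}\!\!\sum_{\emptyset\ne T_1,\ldots,T_k\subseteq[s]}\Bigl(\prod_{j=1}^k\prod_{i\in T_j}\epsilon_i\Bigr)\sum_{n_1,\ldots,n_k\le X}\E_f\prod_{j=1}^k\prod_{i\in T_j}\lambda\bigl(f(n_j+i)\bigr).$$
Viewing $f(m)=a_0+a_1m+\cdots+a_dm^d$ as a linear form in the coefficient vector $\mathbf{a}\in[-H,H]^{d+1}$ turns the inner expectation into an average of a product of $\lambda$ along a system of linear forms in $\mathbf{a}$, which sets the stage for the higher-order Fourier-analytic input.

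For each tuple $(\vec T,\vec n)$ form the multiset $M:=\{n_j+i:j\in[k],\,i\in T_j\}$ and its odd-multiplicity support $M^{\mathrm{odd}}$. Using $\lambda^2\equiv 1$ off $\{0\}$ and $\Pr_f(f(m)=0)=O(1/H)$, the inner expectation equals $\E_f\prod_{m\in M^{\mathrm{odd}}}\lambda(f(m))$ up to a globally negligible error. If $M^{\mathrm{odd}}=\emptyset$, this is $1-o(1)$; otherwise, the expression is a Liouville correlation along distinct non-degenerate linear forms in $\mathbf{a}$, and Leng's quantitative higher-order Fourier uniformity of $\lambda$ bounds it by $(\log H)^{-A}=(\log X)^{-A\delta}$ uniformly in the forms (hence uniformly in $\vec T,\vec n$). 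Since the total number of tuples is at most $(2^s)^kX^k$, choosing $A\ge(k/2+1)/\delta$ makes the total "unpaired" contribution $O_{d,\delta,s,k}(X^{-1})$.

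The main term comes from "fully paired" tuples with $M^{\mathrm{odd}}=\emptyset$. Generically, such a tuple is encoded by a perfect matching $\pi$ of $[k]$ together with the constraint $T_{\pi(j)}+n_{\pi(j)}=T_j+n_j$ for each pair $\{j,\pi(j)\}$; equivalently, $T_{\pi(j)}$ is a translate of $T_j$ by $n_j-n_{\pi(j)}$. Non-generic tuples (with extra coincidences between distinct pairs) number $O(X^{k/2-1})$ and are negligible. For odd $k$ no matching exists, matching $C_k=0$. For even $k$ there are $(k-1)!!$ matchings, and for each matching each pair contributes the factor
$$\sum_{\substack{\emptyset\ne T,T'\subseteq[s]\\ T'\text{ a translate of }T}}\Bigl(\prod_{i\in T}\epsilon_i\Bigr)\Bigl(\prod_{i'\in T'}\epsilon_{i'}\Bigr)\cdot\#\{(n,n')\in[1,X]^2:T'+n'=T+n\} \;=\; X\cdot 4^s\sigma(\vec\epsilon)^2\cdot(1+O(s/X)),$$
and after multiplying by the $2^{-sk}=4^{-sk/2}$ prefactor, the $k/2$ pair contributions, the $(k-1)!!$ matchings, and the $X^{-k/2}$ normalization, one lands on $C_k\sigma(\vec\epsilon)^k$.

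The main obstacle is the uniform off-diagonal bound used in the second paragraph: one needs $(\log H)^{-A}$ cancellation for $\E_{\mathbf{a}\in[-H,H]^{d+1}}\prod_{m\in M^{\mathrm{odd}}}\lambda(\sum_{i=0}^d a_im^i)$, for any finite set of distinct integers $m$ with $|m|\le X+s$, uniform in the size of the set. This is exactly the input provided by Leng's quantitative higher-order Fourier uniformity result (resting on Leng--Sah--Sawhney's quantitative inverse theorem for the Gowers norms), which already underlies Theorem~\ref{thm: moments of polynomial Chowla} (the case $s=1$, $\vec\epsilon=(1)$). No new analytic estimate is required beyond that theorem; the extra content here is the combinatorial bookkeeping of the shifts $i\in T_j$ in the paired main term, which is precisely what produces the variance $\sigma(\vec\epsilon)^2$ rather than $1$.
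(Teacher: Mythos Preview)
Your proposal is correct and follows essentially the same approach as the paper's own proof: expand the sign-pattern indicator via $2^{-s}\prod(1+\epsilon_i\lambda)$, open the $k$-th power, apply the Liouville input (the paper's Theorem~\ref{thm:GT-input}, built on Leng's work) to kill the tuples whose multiset $\{n_j+i:i\in T_j\}$ is not even, and then count the even-multiset tuples by a perfect-matching argument that forces paired $T_j$'s to be translates. The paper packages the ``generic vs.\ non-generic'' step in graph-theoretic language (an auxiliary graph $J$ on $[k]$ whose connected components control the degrees of freedom in $\vec n$), but this is exactly the bookkeeping you describe more informally; your claim that non-matching configurations contribute $O(X^{k/2-1})$ is the content of the paper's bound for $c<k/2$ components.
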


Of course, it follows that the main quantity under consideration is distributed as a Gaussian of mean $0$ and variance $\sigma(\Vec{\epsilon})^2$ in the limit.  The $s=1$ case corresponds to Theorem~\ref{thm: moments of polynomial Chowla} (via the identity $\sum_{n \leq X}\lambda(f(n))=X-2\#\{n \leq X: \lambda(f(n))=-1\}$).

In the course of the proof of Theorem~\ref{thm: sign patterns}, we will see that if $y(1), y(2), \ldots$ is a sequence of independent uniformly random $\pm 1$'s, then the limiting distribution of the count
$$\frac{\#\{n\leq X: (y(n+1), \ldots, y(n+s)) = (\epsilon_1,...,\epsilon_s)\}-2^{-s}X}{X^{1/2}}$$
is also a Gaussian of mean $0$ and variance $\sigma(\overline{\varepsilon})^2$, in accordance with the heuristic from the beginning of this subsection.  The latter fact follows from a more general classical result of Hoeffding and Robbins \cite{hoeffding1994central} on $m$-dependent random variables.

\subsection{Related questions}\label{subsec: related questions}
There are several ways in which one could hope to extend our results.  Our argument for Theorem \ref{thm: BH prime tuple hybrid} should also give $L^q$-bounds with $q>2$, possibly under further restrictions among $\delta,k,q$; we worked out only the $L^2$-bounds because these suffices for our Poisson Tail applications. The hypotheses $\delta<1$ in Theorem \ref{thm: moments of BH} and $\delta<1/k$ in Theorem \ref{thm: BH prime tuple hybrid} seem to reflect a technical limitation of our methods, and it would be interesting to remove or weaken these assumptions.

A natural follow-up question is whether Theorems~\ref{thm: moments of BH} and~\ref{thm: BH prime tuple hybrid} can be established with $H(X)$ growing more slowly, e.g., polynomially in $X$. 
\cite[Theorem 1.2]{BrowningSofosTeravainen} directly implies a version of Theorem \ref{thm: moments of BH} where $H(X)$ grows polynomially in $X$. Indeed, their result shows that for $0<c<5/(19d)$ and for $H \asymp X^{1/c}$, for any $k\in \mathbb{N}$ and $A>0$, we have
\begin{equation*}
    \E_{f\in \mathcal{P}(d,H)} \left(\frac{1}{X} \sum_{n\leq X} \Lambda(f(n)) - \fS_f(\exp(\sqrt{\log X}))\right)^k  = O_{d,c,k,A}((\log X)^{-A}). 
\end{equation*}
The techniques of Browning, Sofos and Ter\"av\"ainen differ greatly from ours; one topic for future work could be seeing what their methods say about analogues of the Hardy--Littlewood Prime Tuples Conjecture and the Poisson Tail Conjecture. Another possible path towards decreasing $H(X)$ is utilizing the ``Siegel zero model'' for the von Mangoldt function (described explicitly in \cite{MTW:Siegel_correction}) and exploiting the fact that we are averaging over many linear systems (indexed by the variables $n_i$ below).

\subsection{Structure of the paper}
In Section~\ref{sec:prelim} we gather notation, our singular series conventions, and other preliminaries and background.  In Section~\ref{sec:hofa} we explain the necessary inputs from higher-order Fourier analysis.  Section~\ref{sec: poly chowla} contains our on-average result about the polynomial Chowla problem and Liouville sign patterns.  We upper-bound all moments for Bateman--Horn in Section~\ref{sec:BH-moments}.  We prove our almost-all prime tuples result in Section~\ref{sec:tuples}, and in Section~\ref{sec:tail} we establish our almost-all results on Poisson and Gaussian tails.

\section{Preliminaries and heuristics}\label{sec:prelim}

\subsection{Notation}\label{subsec: notation}
Throughout this paper, $\Lambda, \lambda, \mu$ denote the von Mangoldt, Liouville, and M\"obius functions, respectively.  We extend each of these functions to all of $\mathbb{Z}$ by declaring it to be equal on $n,-n$ for all $n$.  We write $\mathscr{P}$ for the set of primes (again both positive and negative).

We will always treat the degree $d$ as fixed, and we will often treat the moment or tuple length $k$ as fixed as well.  In proofs, we will sometimes omit the dependence on $d,k$ in our asymptotic notation $O(\cdot)$, $o(\cdot)$, $\ll$ when there is no risk of confusion. 

\subsection{Singular series}

In general, we will write $\fS$ for a singular series and $\fS(w)$ for a truncation of it. Singular series depending on polynomials $f(x)\in \Z[x]$ often need to be truncated due to issues about absolute convergence.

Let us write down some singular series that we will encounter later. First,the Bateman--Horn heuristic involves the singular series
\begin{equation*}
    \fS_f:= \prod_{p} \left(\frac{p^{-1}\#\{x\in \F_p: f(x)\in \F_p^\times \}}{1-1/p}\right).
\end{equation*}
Second, the singular series from the Hardy--Littlewood Prime Tuples Conjecture is
\begin{equation*}
    \fS_{\Vec{\ell}}:= \prod_{p} \left(\frac{p^{-1}\#\{x\in \F_p: x+\ell_i\in \F_p^\times~ \forall i\}}{(1-1/p)^{|\Vec{\ell}|}}\right).
\end{equation*}
Third, our ``hybrid'' problem about prime tuples along polynomials corresponds to the singular series
\begin{equation*}
    \fS_{f,\Vec{\ell}} = \prod_{p} \left(\frac{p^{-1}\#\{x\in \F_p: f(x+\ell_i)\in \F_p^\times~ \forall i\}}{(1-1/p)^{|\Vec{\ell}|}}\right).
\end{equation*}
We will work with truncated versions of the first and third of these singular series, viz.
\begin{equation*}
    \fS_f(w):= \prod_{p \leq w} \left(\frac{p^{-1}\#\{x\in \F_p: f(x)\in \F_p^\times \}}{1-1/p}\right),
\end{equation*}

\begin{equation*}
    \fS_{f,\Vec{\ell}}\,(w):= \prod_{p \leq w} \left(\frac{p^{-1}\#\{x\in \F_p: f(x+\ell_i)\in \F_p^\times~ \forall i\}}{(1-1/p)^{|\Vec{\ell}|}}\right),
\end{equation*}
where the truncation point $w$ is a slowly-growing parameter.  We need the following basic bounds.

\begin{lemma}\label{lem: upper bound on truncated singular series}
Let $f(x)\in \Z[x]$, and let $\Vec{\ell}\in \Z^k$ be a tuple of distinct integers. Then
    \begin{equation*}
        \fS_f(w) \ll \log w \quad \text{and} \quad \fS_{f,\Vec{\ell}}\,(w)\ll (\log w)^k.
    \end{equation*}
\end{lemma}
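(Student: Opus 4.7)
The plan is to bound each local factor in the product trivially and then apply Mertens' theorem.

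For the first inequality, fix a prime $p \leq w$ and let $N_p(f) := \#\{x \in \F_p: f(x) \in \F_p^\times\}$. Since $N_p(f) \leq p$, the local factor in $\fS_f(w)$ at $p$ satisfies
\[
\frac{p^{-1}N_p(f)}{1-1/p} \leq \frac{1}{1-1/p}.
\]
Taking the product over $p \leq w$ and invoking Mertens' third theorem, $\prod_{p \leq w}(1-1/p)^{-1} = e^{\gamma}\log w + O(1)$, immediately yields $\fS_f(w) \ll \log w$.

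For the second inequality, the argument is identical: for each prime $p \leq w$, the numerator of the local factor of $\fS_{f,\vec{\ell}}(w)$ counts a subset of $\F_p$ and is therefore at most $p$, so the local factor is at most $(1-1/p)^{-k}$. The product over $p \leq w$ is then bounded by $\prod_{p\leq w}(1-1/p)^{-k} \ll (\log w)^k$, again by Mertens.

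There is no real obstacle here; the bound is the ``trivial'' one obtained by ignoring any cancellation from the actual cardinalities $N_p(f)$ (which would typically contribute a factor $(1-1/p)^{\text{something}}$ making the product converge, but only conditionally, which is precisely the reason one needs to truncate the singular series in the first place). The point of the lemma is merely to record a clean upper bound of the right order of magnitude for use in later moment computations. One small thing to be mindful of when writing up is that the bound is vacuously valid (and in fact trivially true) on primes where the numerator vanishes, so one does not need to treat the ``bad'' primes separately.
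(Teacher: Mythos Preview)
Your proof is correct and is essentially identical to the paper's own argument: bound the numerator of each local factor trivially by $p$, reduce to $\prod_{p\leq w}(1-1/p)^{-1}$ (resp.\ $\prod_{p\leq w}(1-1/p)^{-k}$), and conclude via Mertens. The only cosmetic difference is that the paper passes through $\exp\bigl(\sum_{p\leq w}1/p\bigr)$ rather than citing Mertens' third theorem directly.
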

\begin{proof}
    From $\#\{x\in \F_p: f(x)\in \F_p^\times\} ,\,\#\{x\in \F_p: f(x+\ell_i)\in \F_p^\times~ \forall i\} \leq p$ we deduce that 
    \begin{equation*}
        \fS_f(w) \leq \prod_{p\leq w} \left(1-\frac{1}{p}\right)^{-1} \ll \exp\left(\sum_{p\leq w} \frac{1}{p}\right)\ll \log w
    \end{equation*}
   and likewise
    \begin{equation*}
        \fS_{f,\Vec{\ell}}\,(w) \leq \prod_{p\leq w} \left(1-\frac{1}{p}\right)^{-k} \ll \exp\left(k \sum_{p\leq w} \frac{1}{p} \right) \ll (\log w)^{k}.\qedhere
    \end{equation*}
\end{proof}

\begin{lemma}\label{lem: lower bound truncated singular series}
    Let $f(x)\in \Z[x]$ be a polynomial of degree $d$. If there is a prime $p\leq w$ dividing $f(n)$ for all integer inputs, then $\fS_f(w) = 0$. Otherwise, we have the lower bound 
    \begin{equation*}\fS_f(w) \gg_d (\log w)^{1-d}.\end{equation*}
\end{lemma}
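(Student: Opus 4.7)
The plan is to split the product over primes into two parts: small primes $p \leq d$ (which are a finite set depending only on $d$) and large primes $d < p \leq w$. First I would rewrite each factor in the product more usefully. Setting $N_f(p):=\#\{x\in \F_p: f(x)\in \F_p^\times\}$, the factor at $p$ equals $\frac{N_f(p)/p}{1-1/p}=\frac{N_f(p)}{p-1}$. The case $\fS_f(w)=0$ is immediate: if $p\leq w$ divides $f(n)$ for all integers $n$, then $N_f(p)=0$, killing the product. In the contrapositive case, we know $N_f(p)\geq 1$ for every $p\leq w$, so every factor is strictly positive and it suffices to bound each factor below.

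For the small primes $p \leq d$, I would simply use $N_f(p)\geq 1$, which gives each factor $\geq \frac{1}{p-1}\geq \frac{1}{d-1}$. Since there are at most $\pi(d)$ such primes, their combined contribution is $\gg_d 1$. For large primes $p>d$, the key observation is that $f$ reduced mod $p$ is a polynomial of degree at most $d$ which, by the hypothesis of the lemma, is not the zero polynomial in $\F_p[x]$ (since $p>d$, the only way $f$ reduces to the zero polynomial is if all its coefficients are divisible by $p$, which is a special case of $p\mid f(n)$ for all $n$). Hence $f$ has at most $d$ roots modulo $p$, giving $N_f(p)\geq p-d$ and therefore
\begin{equation*}
    \frac{N_f(p)}{p-1}\geq \frac{p-d}{p-1}=1-\frac{d-1}{p-1}.
\end{equation*}

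To finish, I would take logarithms and bound $\sum_{d<p\leq w}\log(1-(d-1)/(p-1))$ from below. For primes $p\geq 2d-1$, we have $(d-1)/(p-1)\leq 1/2$, and the inequality $\log(1-x)\geq -x-x^2$ for $0\leq x\leq 1/2$ gives
\begin{equation*}
    \sum_{2d-1\leq p\leq w}\log\!\left(1-\frac{d-1}{p-1}\right)\geq -(d-1)\sum_{p\leq w}\frac{1}{p-1}-(d-1)^2\sum_{p}\frac{1}{(p-1)^2}.
\end{equation*}
The first sum is $\log\log w+O(1)$ by Mertens' theorem (since $1/(p-1)=1/p+O(1/p^2)$), and the second converges to a constant depending on $d$. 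The finitely many primes $p$ with $d<p<2d-1$ each contribute a factor bounded below by a constant depending on $d$. Exponentiating yields $\prod_{d<p\leq w}(\text{factor})\gg_d (\log w)^{-(d-1)}$. Multiplying with the $\gg_d 1$ contribution from small primes produces the claimed bound $\fS_f(w)\gg_d (\log w)^{1-d}$. The main thing to get right is the dependence on $d$ in the error terms, but this is routine Mertens-style bookkeeping with no substantive obstacle.
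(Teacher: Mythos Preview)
Your proof is correct and follows essentially the same approach as the paper: both bound $N_f(p)\geq \max(1,p-d)$ for each prime $p\leq w$ (handling $p\leq d$ and $p>d$ separately) and then apply Mertens to conclude. The paper is simply more terse, writing the lower bound uniformly as $\frac{1-\min(d,p-1)/p}{1-1/p}$ and comparing directly to $\prod_{p\leq w}(1-1/p)^{d-1}$, whereas you carry out the logarithmic estimate explicitly.
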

\begin{proof}
    For each prime $p$, if $f$ modulo $p$ is not the zero polynomial, then it vanishes on at most $d$ elements of $\F_p$.  In particular, 
    $\#\{x\in \F_p: f(x)\in \F_p^\times\}$ is either $0$ or at least $p-d$.  If the former occurs for some $p \leq w$, then $\fS_f(w)=0$.  Otherwise we have
    \begin{equation*}
        \fS_f(w) \geq \prod_{p\leq w} \left(\frac{1-\min(d,p-1)/p}{1-1/p}\right) \gg_d \prod_{p\leq w}\left(1-\frac{1}{p}\right)^{d-1} \gg_{d} (\log w)^{1-d}.\qedhere
    \end{equation*}
\end{proof}

Finally, we record the singular series that appears in the main term of the asymptotic results of Green and Tao \cite{GT2,GT4,GT3}, and Green, Tao, and Ziegler \cite{green2012inverse} on linear equations in primes. Let $\Psi= (\psi_1,\hdots,\psi_k)$ denote a set of linear forms in $n$ variables with finite complexity. The corresponding singular series is
\begin{equation*}
    \fS_{\Psi}:= \prod_{p} \left(\frac{p^{-n} \#\{\Vec{x}\in \F_p^n: \psi_i(\Vec{x})\in \F_p^\times~ \forall i\}}{(1-1/p)^k}\right).
\end{equation*}
Observe that $\fS_{\Psi}$ converges absolutely. Indeed, for all but finitely many primes $p$, we have $$\#\{\vec x\in \F_p^n:\psi_i(\vec x)\in \F_p^\times~\forall i\} = p^n -kp^{n-1}+O_{\Psi}(p^{n-2})$$ from the Inclusion-Exclusion Principle and the fact that each equation $\psi_i(\vec x)=0$ cuts out a plane.

\subsection{Heuristics for sign patterns of the Liouville function}\label{sec: sign}
It is believed that the sequence of Liouville values $\lambda(n)$ should behave ``similarly'' to a random $\pm 1$-valued sequence.  A concrete conjecture in this direction is that a length-$s$ subinterval of Liouville values with a random starting point is equally likely to see each of the $2^s$ possible sign patterns.

\begin{conjecture}[Chowla]\label{conj:chowla}
    Let $s\geq1$ and let $(\epsilon_1,\hdots,\epsilon_s)\in \{-1,1\}^s$ be a sign pattern.  Then the set $\{n \in \mathbb{N}: \lambda(n+i) = \epsilon_i ~\forall i\}$ has natural density $2^{-s}.$
\end{conjecture}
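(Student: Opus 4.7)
The Chowla Conjecture is one of the central open problems in analytic number theory, so an honest ``plan'' here should be read as describing the state of the art rather than a realistic route to a full proof. The plan is to reduce the sign-pattern statement to statements about correlations of the Liouville function, and then attack those correlations using higher-order Fourier analysis of $\lambda$.

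First I would use the identity
\begin{equation*}
  \mathbf{1}\{\lambda(n+i)=\epsilon_i~\forall i\} \;=\; 2^{-s}\prod_{i=1}^{s}\bigl(1+\epsilon_i\lambda(n+i)\bigr) \;=\; 2^{-s}\sum_{T\subseteq [s]}\Bigl(\prod_{i\in T}\epsilon_i\Bigr)\prod_{i\in T}\lambda(n+i),
\end{equation*}
which reduces the claimed density of the pattern $\vec{\epsilon}$ to showing that for each nonempty $T\subseteq[s]$ one has the natural-density correlation bound
\begin{equation*}
  \frac{1}{X}\sum_{n\leq X}\prod_{i\in T}\lambda(n+i) \;=\; o(1).
\end{equation*}
The $T=\emptyset$ term contributes the expected density $2^{-s}$; the bound above then controls all remaining contributions.

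To attack the correlation estimate, I would invoke the Gowers uniformity of $\lambda$. Leng's quantitative estimate $\|\lambda\cdot\mathbf{1}_{[N]}\|_{U^{k+1}[N]}\to 0$ (cf.\ the discussion in the introduction, which depends on the Leng--Sah--Sawhney inverse theorem for Gowers norms) together with the generalized von Neumann inequality reduces the $|T|$-point correlation to controlling the inner product of $\lambda$ against a degree-$|T|$ nilsequence; this is handled by a Möbius/nilsequence estimate of the type established by Green and Tao. In outline this is the same machinery driving the results of the present paper, but applied directly to $n$-averaging rather than coefficient-averaging.

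The decisive obstacle is that \emph{all} current unconditional techniques for correlations of $\lambda$ produce results only in the \emph{logarithmically averaged} sense (Matom\"aki--Radziwi{\l}{\l}--Tao for two-point, Tao--Ter\"av\"ainen for odd-order), whereas Conjecture~\ref{conj:chowla} as stated demands natural density. Bridging the logarithmic/natural gap requires dispensing with the entropy decrement argument, and no-one knows how to do this. From the perspective of this paper, the upshot is that averaging over the coefficients of a random $f\in\mathcal{P}(d,H)$ creates vastly more room than averaging in $n\leq X$, so one can prove the analogue of Chowla's conjecture (and even pin down Gaussian fluctuations, as in Theorem~\ref{thm: sign patterns}) unconditionally in that averaged regime, whereas the natural-density version remains out of reach.
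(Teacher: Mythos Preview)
The statement is a conjecture, and the paper does not prove it; it presents it as background and surveys partial progress (the $s=1$ case via the Prime Number Theorem, the logarithmically-averaged results of Tao and Tao--Ter\"av\"ainen, etc.). You correctly identify the conjecture as open and your reduction via the identity $\mathbf{1}\{\lambda(n+i)=\epsilon_i~\forall i\}=2^{-s}\prod_i(1+\epsilon_i\lambda(n+i))$ matches the paper's equation~\eqref{eq:sign-patterns-FOIL}.

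There is, however, a genuine error in your sketch of how one would attack the correlations. You write that Gowers uniformity of $\lambda$ ``together with the generalized von Neumann inequality reduces the $|T|$-point correlation'' to a nilsequence estimate. But the generalized von Neumann theorem requires the homogeneous parts of the linear forms to be pairwise linearly independent, and the forms $\psi_i(n)=n+i$ all have the \emph{same} homogeneous part $\dot\psi_i=1$. The system has infinite complexity in the Green--Tao sense, so no Gowers-norm control on $\lambda$---however strong---yields $\sum_{n\le X}\prod_{i\in T}\lambda(n+i)=o(X)$ via this route. Concretely, $\|\lambda\|_{U^k[N]}\to 0$ is already known and does not give even the two-point correlation $\sum_n\lambda(n)\lambda(n+1)=o(N)$.

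This is exactly the distinction the paper exploits: when one averages over the coefficients $a_0,\ldots,a_d$ of $f$, the forms $\psi_i(\vec a)=a_0+a_1n_i+\cdots+a_dn_i^d$ for distinct $n_i$ \emph{are} linearly independent (Vandermonde), so Theorem~\ref{thm:james-strengthened} applies. The obstacle to Chowla is not merely the logarithmic-versus-natural-density gap you mention (though that is a real issue for the entropy-decrement approach); it is that the higher-order Fourier machinery simply does not touch single-variable shift correlations.
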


The $s=1$ case is equivalent to the prime number theorem. Hildebrand \cite{Hil86} showed that each sign pattern of length at most $3$ occurs infinitely often, and in 2015 Matom\"aki, Radziwi\l{}\l{}, and Tao \cite{MRT16} established that these sign patterns occur with positive natural lower density. The same authors \cite{MRT:averaged_Chowla} studied the distribution of the tuple $(\lambda(n+h_1),\ldots, \lambda(n+h_s) )$ when the $h_i$'s are averaged over an interval. 

In a related direction, there has been much recent progress on the logarithmically-weighted form of Chowla's Conjecture, namely, the statement that
\begin{equation*}
    \frac{1}{\log X} \sum_{n\leq X} \frac{\lambda(n+1)\cdots \lambda(n+s)}{n} = o_s(1)
\end{equation*}
as $X\rightarrow\infty$.
This conjecture is strictly weaker than Conjecture \ref{conj:chowla}. Tao \cite{Tao:log_chowla} proved the logarithmically-weighted Chowla Conjecture for $s=2$, and Tao and Ter\"av\"ainen \cite{TT:log_Chowla} later proved it for all odd $s$. The same authors showed that for $s=4$ all sign patterns occur with positive lower density. See also \cite{helfgott2021expansion,Pilatte}. Recent work towards the higher-uniformity conjecture \cite{MRTTZ23,  MSTT:higher_uniformity,MRSTT:higher_uniformity,Walsh:phase_pyramids,Walsh:local_phases} is approaching the range that would establish the logarithmically-weighted Chowla conjecture for all $s$. Sawin \cite{Sawin:sign_patterns} has explained potential obstructions to obtaining more detailed information on sign patterns using this line of thought. 

The full polynomial Chowla Conjecture generalizes the above sign pattern conjecture.
\begin{conjecture}[Chowla]
Let $f(x)\in \Z[x]$.  If $f$ is not a constant times the square of another polynomial, then as $X\rightarrow\infty$ we have 
$$\frac{1}{X}\sum_{n\leq X} \lambda(f(n)) = o_f(1).$$
\end{conjecture}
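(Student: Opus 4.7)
The plan is to stratify by the irreducible factorisation of $f$ over $\Q$. Writing $f(x) = c \prod_i f_i(x)^{e_i}$ with the $f_i$ pairwise non-associate monic irreducibles, complete multiplicativity of $\lambda$ plus $\lambda(m^2)=1$ gives $\lambda(f(n)) = \lambda(c) \prod_i \lambda(f_i(n))^{e_i}$, so repeated factors can be squared off. The hypothesis that $f$ is not a constant times a square guarantees that at least one $e_i$ is odd, reducing us to a product of pairwise distinct irreducibles with at least one factor contributing non-trivially. If every $f_i$ is linear, say $f_i(x)=a_ix+b_i$ with the pairs $(a_i,b_i)$ pairwise non-proportional, the sum becomes
$$\frac{1}{X}\sum_{n\le X}\prod_i \lambda(a_in+b_i),$$
which for a single factor is the Prime Number Theorem in arithmetic progressions, and for multiple factors falls under the Chowla/Elliott conjecture for linear forms. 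For an odd number of factors I would invoke the logarithmic results of Tao and Tao--Ter\"av\"ainen and try to upgrade to natural density using the short-interval averaging technology of Matom\"aki--Radziwi\l{}\l{}--Tao; for an even number of linear factors even the logarithmic version is open, so already here the plan is incomplete.

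The genuinely open case is when some $f_i$ has degree $\ge 2$. Here I would apply a Heath--Brown or Vaughan combinatorial identity to $\lambda$ and decompose $\sum_{n\le X}\lambda(f(n))$ into type I sums
$$\sum_{d\le D}\alpha_d \#\{n\le X: d\mid f(n)\},$$
controlled via equidistribution of roots of $f$ modulo $d$ coming from Weil/Deligne-type bounds, together with type II bilinear sums
$$\sum_{M<m\le 2M}\sum_{N<\ell\le 2N}\alpha_m\beta_\ell\, \mathbf{1}[m\ell \in f(\{1,\dots,X\})].$$
The type I sums can be handled in a reasonably wide range of $d$ by standard root-counting and Selberg-sieve inputs; the entire difficulty is pushed into the type II sums.

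The main obstacle is precisely this type II estimate. For a polynomial of degree $\ge 2$ one must extract cancellation from bilinear expressions counting representations $f(n)=m\ell$ with $m,\ell$ in dyadic ranges, and no sieve or equidistribution technology is currently known to achieve this in the required regime---this is exactly why the conjecture has resisted all attacks for nonlinear irreducible $f$. The present paper side-steps the obstruction by averaging over $f$: linearity of $f(n)=a_0+\dots+a_dn^d$ in the coefficients $a_i$ converts correlations of $\lambda(f(n))$ into averages of $\lambda$ along linear forms in the $a_i$, to which Leng's quantitative higher-order Fourier uniformity applies directly. For a fixed $f$ no such averaging is available, and a genuinely new input---perhaps a parametrisation of $\{(n,d):d\mid f(n)\}$ by auxiliary varieties of lower dimension, in the spirit of Friedlander--Iwaniec's or Heath--Brown's work on thin sequences of primes, or a transference of bounds from the Siegel-zero model---appears necessary. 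I therefore expect the type II step to be the main and, at present, insuperable obstacle.
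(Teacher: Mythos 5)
This statement is stated in the paper as a \emph{conjecture} (the full polynomial Chowla Conjecture, in Section~\ref{sec: sign}); the paper offers no proof of it and explicitly describes it as wide open. Your proposal is therefore correctly not a proof: you have accurately identified that the nonlinear irreducible case is beyond all current technology, and that the paper's results only address averaged versions (over the coefficients of $f$), where linearity of $f(n)=a_0+\cdots+a_dn^d$ in the $a_i$ lets higher-order Fourier uniformity of $\lambda$ along linear forms do the work. Your diagnosis of where a direct attack breaks down --- the type~II bilinear sums after a combinatorial decomposition of $\lambda$, for which no equidistribution or sieve input is known when $\deg f_i\geq 2$ --- is the standard and correct assessment of the obstruction.

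Two small factual corrections to your survey of the linear case. First, for exactly two non-proportional linear factors the \emph{logarithmically averaged} two-point Chowla conjecture is a theorem of Tao \cite{Tao:log_chowla}, so it is not accurate to say that for an even number of linear factors even the logarithmic version is open; it is open for an even number of factors $\geq 4$ (Tao--Ter\"av\"ainen \cite{TT:log_Chowla} handle all odd orders). Second, even in the cases where the logarithmic version is known, upgrading to the natural-density statement required by the conjecture as stated is itself an open problem, so the linear case of your stratification is also incomplete as you note. Finally, Ter\"av\"ainen's result \cite{Teravainen:Chowla} for $f$ a product of linear and quadratic factors gives only that $\lambda(f(n))$ takes each sign a positive proportion of the time, which is strictly weaker than the $o_f(1)$ cancellation asserted by the conjecture. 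In short: there is no proof in the paper to compare against, and your proposal correctly concludes that none is currently available.
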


Ter\"av\"ainen \cite{Teravainen:Chowla} showed that if $f$ factors as a product of linear and quadratic polynomials and is not a constant times a perfect square, then $\lambda(f(n))$ attains each of $-1,1$ for a positive proportion of inputs. He also established  an averaged version of the polynomial Chowla problem; this was improved upon quantitatively by \cite{BrowningSofosTeravainen}.  These results hold more generally for nonpretentious multiplicative functions and can be understood as multiplicative analogues of the work of Skorobogatov and Sofos \cite{SkorobogatovSofos}.  In the function-field setting, this question has been resolved by the work of Sawin and Shusterman \cite{SawinShusterman:polynomial_chowla}. Another recent development \cite{KSX23} is that for a fixed polynomial $f$, there is square-root cancellation when $\lambda$ is replaced by a random multiplicative function.

\subsection{Distribution of gaps between primes}
Most of this section is based on the survey of Funkhouser, Goldston and Ledoan \cite{FunkhouserGoldstonLedoan} on the distribution of gaps between consecutive primes; any mistakes are, of course, our own. 

Based on Cram\'er's model, one expects the average gap between two consecutive prime numbers $p_n$, $p_{n+1}$ to be of size $\sim \log p_n$. Works of Zhang \cite{Zhang__gaps}, Maynard \cite{Maynard__Gaps}, and independently Tao (in unpublished work), and the Polymath project \cite{polymath__gaps} exhibit explicit constants $c$ such that there are infinitely many prime gaps of size at most $c$. It is less clear what to expect regarding the largest gaps between consecutive primes. Building on the work of \cite{FGKT:large_gaps,Maynard__Gaps}, the authors of \cite{FGKMT:large_gaps} showed that there are gaps of size at least $$\gg \frac{(\log p_n) (\log\log p_n)(\log\log\log\log p_n)}{\log\log\log p_n}.$$ 
The folklore Poisson Tail Conjecture predicts that the distribution of prime gaps should be roughly Poisson at scale $\log p_n$.

\begin{conjecture}\label{conj: Poisson Tail}
    Let $\varepsilon>0$.  For $1\leq H\leq (\log X)^{2-\varepsilon}$, we have
    \begin{equation*}
       \sum_{\substack{p_{n+1}\leq X \\ p_{n+1}-p_n\geq H}} 1 \asymp e^{-H/\log X} \cdot \frac{X}{\log X};
    \end{equation*}
    \begin{equation*}
        \sum_{\substack{p_{n+1}\leq X \\ p_{n+1}-p_n\geq H}} (p_{n+1}-p_n) \asymp \left(1+\frac{H}{\log X}\right) \cdot e^{-H/\log X} \cdot X.
    \end{equation*}
    For $H>(\log X)^{2+\varepsilon}$ and $X$ sufficiently large, both of the above quantities vanish.
\end{conjecture}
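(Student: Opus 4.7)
The plan is to follow the classical Gallagher strategy \cite{Gallagher} for deducing a Poisson law for primes in short intervals from a sufficiently uniform version of the Hardy--Littlewood Prime $k$-Tuples Conjecture. Write $\lambda := H/\log X$ and set $N_k(X,H) := \#\{n \leq X : \pi(n+H) - \pi(n) = k\}$; the key intermediate statement is the interval Poisson law
\begin{equation*}
    N_k(X,H) = (1+o(1))\, e^{-\lambda} \frac{\lambda^k}{k!} X \quad \text{for each fixed } k \in \Z_{\geq 0}.
\end{equation*}

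First I would establish this interval Poisson law by the method of moments, computing the factorial moments $M_k(X,H) := \sum_{n \leq X}\binom{\pi(n+H)-\pi(n)}{k}$. Expanding and inserting von Mangoldt weights in place of primality indicators gives
\begin{equation*}
    M_k(X,H) = \frac{1}{k!\,(\log X)^k} \sum_{\substack{\vec h \in [0,H]^k \\ \text{distinct}}} \sum_{n \leq X} \Lambda(n+h_1)\cdots\Lambda(n+h_k) + (\text{lower order}).
\end{equation*}
Assuming the prime $k$-tuples conjecture with uniform error $\sum_{n \leq X} \prod_i \Lambda(n+h_i) = (\fS_{\vec h} + o(1))X$, and then using Gallagher's classical singular-series identity $\sum_{\vec h \in [0,H]^k \text{ distinct}} \fS_{\vec h} = (1+o(1))H^k$, the factorial moment collapses to $(1+o(1))\lambda^k X/k!$, which matches Poisson$(\lambda)$. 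The method of moments then yields the interval Poisson law.

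Second, I would convert the interval law to the claimed gap statements. Writing $N(X,H)$ for the number of prime gaps $\geq H$ with right endpoint $\leq X$, a short double counting (each gap of size $G \geq H+1$ contributes exactly $G-H$ values of $n$ to $N_0(X,H)$) gives the identities
\begin{equation*}
    N(X,H) = N_0(X,H-1) - N_0(X,H) \quad \text{and} \quad \sum_{\substack{p_{n+1} \leq X \\ p_{n+1}-p_n \geq H}} (p_{n+1}-p_n) = H \cdot N(X,H) + N_0(X,H).
\end{equation*}
Plugging in $N_0(X,H) \sim X e^{-\lambda}$ and discrete-differentiating yields $N(X,H) \sim (X/\log X)\, e^{-\lambda}$ and total length $(1+\lambda)e^{-\lambda} X$, which are exactly the two asymptotics in the conjecture.

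The hard part is, of course, that the Hardy--Littlewood Prime $k$-Tuples Conjecture is itself open, and the uniform version required here is considerably stronger: I need the error in the conjecture to be $o(\lambda^k X)$ uniformly over all distinct shifts in $[0,H]^k$ and all integers $k$ up to roughly $\lambda$. When $H$ approaches $(\log X)^{2-\varepsilon}$ the mean $\lambda \asymp (\log X)^{1-\varepsilon}$ is large, so the required uniformity is extremely severe and lies well beyond any Bateman--Horn-type result currently known, including the averaged versions proved in this paper. Finally, the vanishing claim for $H > (\log X)^{2+\varepsilon}$ is essentially Cram\'er's conjecture on the largest prime gap, which is itself a famous open folklore conjecture. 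Consequently the stated conjecture cannot be obtained unconditionally by present methods; the scheme above would give it conditionally on a sufficiently quantitative and uniform prime $k$-tuples conjecture together with a Cram\'er-type bound.
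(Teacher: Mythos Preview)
The statement you are addressing is labeled a \emph{Conjecture} in the paper, not a theorem; the paper presents it as the folklore Poisson Tail Conjecture and offers no proof whatsoever. There is thus no ``paper's own proof'' to compare against, and you have correctly diagnosed this by the end of your proposal: the two asymptotics are conditional on a very strong uniform prime $k$-tuples conjecture, and the vanishing statement for $H>(\log X)^{2+\varepsilon}$ is essentially Cram\'er's conjecture. That diagnosis is the right answer here.

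One technical remark on your conditional sketch, in case you pursue it elsewhere: the discrete-differentiation step $N(X,H)=N_0(X,H-1)-N_0(X,H)$ does not follow from the interval Poisson law $N_0(X,H)=(1+o(1))Xe^{-\lambda}$ alone. The difference $N_0(X,H-1)-N_0(X,H)$ is of order $Xe^{-\lambda}/\log X$, so to extract the claimed asymptotic for $N(X,H)$ you would need the error in $N_0$ to be $o(Xe^{-\lambda}/\log X)$, a full factor of $\log X$ stronger than what the moment computation you outline actually gives. Similarly, for $\lambda$ growing toward $(\log X)^{1-\varepsilon}$, the method of moments requires control of the $k$-th factorial moment for $k$ growing with $X$, not just for each fixed $k$; Gallagher's theorem as quoted in the paper treats only bounded $\lambda$.
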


Gallagher \cite{Gallagher} studied the regime $H(X)=\lambda \log X$, with $\lambda>0$ constant, under the assumption of a uniform version of the Hardy--Littlewood Prime Tuples Conjecture. 

\begin{thm}[Gallagher] \label{thm: Gallagher}
Let $\lambda>0$, and set $H(X):= \lambda\log X$. 
    Assume that for each fixed integer $k\geq 2$ and each admissible tuple $\Vec{\ell}\in \Z^k$, we have 
    \begin{equation*}
        \#\{1 \leq n\leq X: n+\ell_i \in \mathscr{P} ~\forall i\} = (1+o_k(1))\fS_{\Vec{\ell}}\, \cdot \frac{X}{(\log X)^k}
    \end{equation*}
    uniformly for $\ell_1, \ldots, \ell_k$ of size at most $H$. Then the quantity
    \begin{equation*}
        P(X,H;k) = \#\{1 \leq n\leq X: (n,n+H] \textrm{ contains exactly $k$ primes}\}
    \end{equation*}
   satisfies the asymptotic (as $X \to \infty$)
    \begin{equation*}
        P(X,H;k) \sim \frac{e^{-\lambda}\lambda^k}{k!}\cdot X.
    \end{equation*}
\end{thm}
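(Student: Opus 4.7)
The plan is to establish convergence to the Poisson distribution by the method of (factorial) moments. For $1 \leq n \leq X$, let $Z_n := \pi(n+H)-\pi(n) = \#((n,n+H]\cap\scrP)$, so that $P(X,H;k) = \#\{n\leq X: Z_n=k\}$. For each fixed $k\in\N$, I would compute the $k$-th factorial moment
\begin{equation*}
    M_k(X) \;:=\; \frac{1}{X}\sum_{n\leq X} \binom{Z_n}{k} \;=\; \frac{1}{X}\sum_{n\leq X} \sum_{\substack{0<\ell_1<\cdots<\ell_k\leq H}} \prod_{i=1}^k \mathbf{1}[n+\ell_i\in\scrP].
\end{equation*}
Swapping the order of summation and applying the hypothesized uniform Hardy--Littlewood prime tuples asymptotic (valid for all admissible $\Vec{\ell}$ of size at most $H$) gives
\begin{equation*}
    M_k(X) \;=\; \bigl(1+o_k(1)\bigr)\,\frac{1}{(\log X)^k} \sum_{0<\ell_1<\cdots<\ell_k\leq H}\! \fS_{\Vec{\ell}} \;+\; O_k\!\left(\frac{H^k}{(\log X)^k}\cdot o(1)\right),
\end{equation*}
where the inadmissible tuples (those for which $\fS_{\Vec{\ell}}=0$) are discarded for free.

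The central ingredient is Gallagher's classical singular series averaging identity, which asserts that
\begin{equation*}
    \sum_{\substack{0<\ell_1,\ldots,\ell_k\leq H\\ \text{distinct}}} \fS_{\Vec{\ell}} \;=\; H^k + O_k\!\bigl(H^{k-1/2+o(1)}\bigr).
\end{equation*}
I would prove this by expanding $\fS_{\Vec{\ell}}$ as a sum over squarefree moduli via the standard identity $\fS_{\Vec{\ell}} = \sum_{q} \frac{\mu^2(q)}{\phi(q)^k}\sum_{\substack{a_1,\ldots,a_k\bmod q\\ \text{distinct residues}}}e_q\bigl(\sum_i a_i\ell_i\bigr)$ (or equivalently a local-factor expansion), summing over $\ell_i\in(0,H]$, and bounding the $q>1$ contribution by elementary exponential sum estimates; this is the technical heart of Gallagher's original argument \cite{Gallagher}. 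Dividing by $k!$ to convert to ordered tuples and combining with the previous display yields
\begin{equation*}
    M_k(X) \;\longrightarrow\; \frac{1}{k!}\left(\frac{H}{\log X}\right)^k \;=\; \frac{\lambda^k}{k!},
\end{equation*}
which is precisely the $k$-th factorial moment of a Poisson distribution with parameter $\lambda$.

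Finally, I would pass from moment convergence to distributional convergence. Since the Poisson distribution has sub-exponential tails and is therefore uniquely determined by its moments, convergence of all factorial moments $M_k$ implies weak convergence of the empirical distribution $k\mapsto P(X,H;k)/X$ to Poisson$(\lambda)$. Explicitly, by the inclusion--exclusion identity
\begin{equation*}
    \frac{P(X,H;k)}{X} \;=\; \sum_{j\geq k}(-1)^{j-k}\binom{j}{k}M_j(X),
\end{equation*}
truncating the outer sum at some large $J=J(\varepsilon)$ and using the trivial bound $\sum_{j>J}\binom{j}{k}M_j(X)\leq \binom{J}{k}^{-1}\E_n Z_n^{J+1}/J!$ (which is small once $J$ is large, by a crude upper-bound computation of higher factorial moments via the same tuples hypothesis) gives the claimed asymptotic. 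The main obstacle is the Gallagher singular series average; everything else is bookkeeping that follows the method-of-moments template.
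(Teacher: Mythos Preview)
The paper does not actually prove this theorem; it is quoted as background from Gallagher's original paper \cite{Gallagher}. Your outline is precisely Gallagher's argument, and it is also the template the paper itself follows in Section~\ref{sec:tail} when proving its own results (Theorems~\ref{thm: Poisson distribution} and~\ref{thm:primes-in-intervals-gaussian}). The one cosmetic difference is that the paper works with ordinary moments and Stirling numbers of the second kind (see Theorem~\ref{thm:M}, where $M_k$ is compared with $\sum_{r=1}^k S(k,r)\mathcal{L}^r$), whereas you work with factorial moments $\E\binom{Z_n}{k}$ and compare with $\lambda^k/k!$. These are equivalent packagings of the same computation; the paper's Lemma~\ref{lem:singular-series-for-M-moments} is the analogue of your Gallagher singular-series average.

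One small correction: your tail bound in the final inclusion--exclusion step,
\[
\sum_{j>J}\binom{j}{k}M_j(X)\leq \binom{J}{k}^{-1}\E_n Z_n^{J+1}/J!,
\]
is not right as written (the inequality does not follow from any obvious manipulation). The standard fix is to use the Bonferroni inequalities: the partial sums $\sum_{j=k}^{J}(-1)^{j-k}\binom{j}{k}M_j(X)$ alternately over- and under-estimate $P(X,H;k)/X$ as $J$ increases, so you may send $X\to\infty$ for fixed $J$ and then let $J\to\infty$, using only that $\sum_{j\geq k}\binom{j}{k}\lambda^j/j!$ converges. This avoids any delicate uniform-in-$j$ tail estimate.
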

Montgomery and Soundararajan \cite{MontgomerySound} showed that under the assumption of a strong form of the Hardy--Littlewood Prime Tuples Conjecture ---more precisely, one with both a uniformity assumption and square-root cancellation---the $2k$-th moments of the gaps between primes are consistent with the prediction of the Poisson Tail Conjecture in the range $\log X \leq H\leq X^{1/2k}$. They found that in this range of $H$, the lower-order fluctuations are slightly smaller than what the Cr\'amer model would predict.  There is no such ``unexpected smaller variance'' phenomenon in our work, essentially because our averaging parameter $H$ is so large that the averaging ``covers up'' more subtle lower-order behavior. 

Finally, we mention the thematically related work of Balog \cite{balog1990prime} and Kawada \cite{kawada1993prime} (see also the references therein) on an averaged version of the Prime Tuples Conjecture where one averages over the shifts.

\begin{comment}
\subsection{Estimates for polynomials} \Comm{do we end up using this anywhere?}
Finally, let us establish some necessary lemmas about polynomials that we will use later in the paper. 

\begin{lemma}\label{lem:sublevel}
Let $d \in \mathbb{N}$ and $R>0$.  If $f \in \mathbb{R}[x]$ is a degree-$d$ polynomial with leading coefficient $a_d \neq 0$, then
$$|\{n \in \mathbb{Z}: |f(n)| \leq R\}| \ll_d 1+(R/|a_d|)^{1/d}.$$
\end{lemma}

\begin{proof}
Consider the sublevel set
$$S:=\{x \in \mathbb{R}: |f(x)| \leq R\}.$$
Since the $d$-th derivative of $f$ has absolute value $d!|a_d|$ everywhere, the classical van der Corput Lemma (see, e.g. [cite Stein, \emph{Harmonic Analysis}]) tells us that $S$ has measure $\ll_d (R/|a_d|)^{1/d}$.  Since $f$ is a degree-$d$ polynomial, the set $S$ consists of the union of at most $d$ intervals.  It follows that $|S \cap \mathbb{Z}|$ is at most $d$ plus the measure of $S$.
\end{proof}
\end{comment}

\section{Higher-order Fourier analysis input}\label{sec:hofa}

In this section we recall the necessary tools from higher-order Fourier analysis.  This area originated in Gowers's new proof of Szemer\'edi's Theorem \cite{gowers1998new,gowers2001new}.  The work of Green and Tao \cite{GT2,GT4,GT3}, and later Green, Tao, and Ziegler \cite{green2012inverse}, used higher-order Fourier analysis to study the asymptotic frequency with which linear forms simultaneously assume prime values.  In general, the circle method is useful for problems of this type when the number of variables exceeds the number of linear forms.  Higher-order Fourier analysis allows one to access the regime where the number of linear forms exceeds the number of variables; it is this shift in perspective that lets us overcome the limitations of Wilson's circle-method approach \cite{Wilson}.

Much of the early work in higher-order Fourier analysis came with poor quantitative dependences, and a major theme has been obtaining ``reasonable'' quantitative bounds.  We will use the strongest known quantitative bounds for arithmetic functions evaluated along linear forms.  The main tool is Leng's recent breakthrough \cite{Leng} on the higher-order Fourier-uniformity of the von Mangoldt and M\"obius functions; as we mentioned above, this work builds on quantitative improvements for the inverse theory of the Gowers norms, as developed by Leng~\cite{leng2023efficient} and Leng, Sah, and Sawhney~\cite{leng2024quasipolynomial}.

\subsection{Leng's quantitative result on linear equations in primes}
We cannot quite use Leng's main result out-of-the-box because it is stated for multilinear systems with constant coefficients, whereas we work with multilinear systems with slowly-growing coefficients.  We will instead use the following variant.

\begin{thm}\label{thm:james-strengthened}
Let $m,t,A \in \mathbb{N}$, and let $N \in \N$ be a large parameter.
Let $\Omega \subseteq [-N,N]^m$ be a convex set, and let $\psi_1, \ldots, \psi_t: \mathbb{Z}^m \to \mathbb{Z}$ be affine linear forms of the form
$$\psi_i(\vec n)=\vec n \cdot \dot \psi_i+\psi_i(0),$$
where $\dot \psi_1, \ldots, \dot \psi_m \in \Z^m$ are pairwise linearly independent with coefficients of size at most $(\log N)^A$, and each $|\psi_i(0)| \leq (\log N)^A$.  Then the following holds.

First, we have the asymptotic
$$\sum_{\vec n \in \Omega \cap \Z^m}\prod_{i=1}^t \Lambda(\psi_i(\vec n))=\mathfrak{S}_\Psi\cdot \vol(\Omega)+O_{m,t,A}\left(N^m (\log N)^{-A}\right),$$
where the singular series $\mathfrak{S}_\Psi$ is given by
\begin{equation*}
\mathfrak{S}_\Psi:=
\prod_p \frac{p^{-m}\#\{\vec n \in \F_p^m: \psi_i(\vec n) \in \F_p^\times ~\forall i\}}{(1-1/p)^t}.
\end{equation*}

Second, we have the cancellation
$$\sum_{\vec n \in \Omega \cap \Z^d}\prod_{i=1}^t \mu(\psi_i(\vec n)), \quad \sum_{\vec n \in \Omega \cap \Z^d}\prod_{i=1}^t \lambda(\psi_i(\vec n))\ll_{m,t,A} N^m (\log N)^{-A}.$$
\end{thm}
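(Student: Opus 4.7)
The plan is to adapt Leng's main theorem \cite{Leng} on linear equations in primes to the setting where the forms $\psi_i$ have polylogarithmic coefficient growth. The key observation is that Leng's quantitative Gowers uniformity estimate $\|\Lambda-\Lambda^\sharp\|_{U^{s+1}[N]}\ll_{A,s}(\log N)^{-A}$ (valid for every $A$, where $\Lambda^\sharp$ is the major-arc approximation to $\Lambda$) saves arbitrarily many logarithms, so losses of size $(\log N)^{O(A)}$ caused by the growth of coefficients can be absorbed without difficulty. The $\mu$ and $\lambda$ cases follow the same template, using the analogous uniformity statements from \cite{Leng} and skipping the main term extraction.

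I would first perform the standard $W$-trick with $W=\prod_{p\le w(N)}p$ for $w(N)=(\log N)^{O(A)}$ and write $\Lambda=\Lambda^\sharp+(\Lambda-\Lambda^\sharp)$. Expanding $\prod_{i=1}^t\Lambda(\psi_i(\vec n))$ by multilinearity produces a main term (all factors equal to $\Lambda^\sharp$) and $2^t-1$ error terms (each containing at least one factor of $\Lambda-\Lambda^\sharp$). The main term is evaluated by a local density computation: summing over $\vec n\in\Omega\cap\Z^m$, localizing modulo $W$, and applying the Chinese Remainder Theorem reconstructs the singular series $\mathfrak{S}_\Psi$ times $\vol(\Omega)$. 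Bounds in the spirit of Lemma~\ref{lem: upper bound on truncated singular series} show that truncating the singular series at $w(N)$ (rather than over all primes) costs only $N^m(\log N)^{-A}$. Each error term is controlled by the generalized von Neumann theorem of Green--Tao \cite{GT4}, which gives
$$\sum_{\vec n\in\Omega\cap\Z^m}\prod_i f_i(\psi_i(\vec n))\ll_{m,t}N^m\min_i\|f_i\|_{U^{s+1}[cN]},$$
where $s$ is the Cauchy--Schwarz complexity of the system $\Psi$ and $c\ll(\log N)^A$. Since the $\dot\psi_i$ are pairwise linearly independent and there are at most $t$ of them, $s$ is bounded by a function of $t$ alone, independent of the particular coefficient sizes. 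Setting $f_i=\Lambda-\Lambda^\sharp$ in one factor and invoking Leng's bound with a sufficiently large exponent gives the claimed error $N^m(\log N)^{-A}$.

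The main obstacle is verifying that both the generalized von Neumann theorem and Leng's Gowers uniformity estimate are genuinely uniform with respect to affine forms whose coefficients grow polylogarithmically. A robust workaround, if the direct uniformity in \cite{Leng} is not sufficient, is to partition $\Omega$ into $(\log N)^{O(A)}$ congruence classes modulo a fixed modulus $q=q(N)$ so that on each class the $\psi_i$ restrict to bounded-complexity systems with smaller coefficients; the proliferation of pieces is dwarfed by Leng's arbitrarily large logarithmic savings. Care is also needed at the boundary of $\Omega$, but convexity lets us approximate $\Omega$ by a union of boxes at cost $O_m(N^{m-1})$, which is negligible compared with the target error.
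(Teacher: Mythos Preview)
Your approach is essentially the paper's: decompose $\Lambda$ into a structured piece plus a Gowers-uniform remainder, control all mixed terms by the generalized von Neumann inequality combined with Leng's arbitrary-logarithm Gowers norm bound, and evaluate the pure-structured main term locally. Two points deserve sharpening.

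First, the handling of polylogarithmic coefficient growth. Your fallback ``partition $\Omega$ into congruence classes modulo $q$ so that the $\psi_i$ restrict to systems with smaller coefficients'' does not actually do what you want: writing $\vec n=q\vec m+\vec r$ gives new forms in $\vec m$ with linear part $q\dot\psi_i$, so the coefficients become \emph{larger}. The paper instead observes directly that the generalized von Neumann inequality has an explicit polynomial dependence on the coefficient sizes (citing \cite[Lemma~3.5(iv)]{kravitz2024quantitative}), yielding a loss of $(\log N)^{O_t(A)}$; this is then swallowed by taking the Gowers-norm exponent $B$ large in terms of $t$ and $A$. You anticipated exactly this in your first paragraph, so just drop the workaround.

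Second, the Liouville case. Leng states his Gowers-norm bounds for $\Lambda$ and $\mu$ only, not for $\lambda$, so ``the analogous uniformity statements from \cite{Leng}'' is not quite available off the shelf. The paper derives $\|\lambda\|_{U^{s+1}[N]}\ll_{s,B}(\log N)^{-B}$ from the $\mu$ bound via the identity $\lambda(n)=\sum_{r^2\mid n}\mu(n/r^2)$: one truncates the $r$-sum at $(\log N)^C$, and for each small $r$ applies Gowers--Cauchy--Schwarz to pull out a factor of $\|\mu\cdot\mathbf 1_{[M/r^2]}\|_{U^{s+1}}$. This is routine but worth noting explicitly.

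For the main term the paper invokes the sieve computation \cite[Theorem~5.2]{TaoTeravainen} for the Cram\'er approximant rather than redoing a $W$-trick local count; either works.
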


There are two differences between this theorem and \cite[Theorem 5]{Leng}:
\begin{itemize}
    \item Leng assumes that the $\dot \psi_i$'s have coefficients of constant size.
    \item Leng states his results only for $\Lambda$ and $\mu$ (not for $\lambda$).
\end{itemize}
It is well-known to experts that one can obtain the statement of Theorem~\ref{thm:james-strengthened} from the arguments in \cite{Leng}.  We will briefly indicate the necessary modifications of the proof.

The main work in \cite{Leng} is establishing the Gowers-norm control
$$\|\Lambda-\Lambda_{\cramer, z}\|_{U^{s+1}[N]} \ll_{s,B} (\log N)^{-B}, \quad \|\mu\|_{U^{s+1}[N]} \ll_{s,B} (\log N)^{-B}$$
for all $s \in \N$ and $B>0$.  Here $\Lambda_{\cramer, z}$ denotes the \emph{Cram\'er approximant} of the von Mangoldt function (where $z=z(N):=\exp((\log N)^{1/10})$ is an auxiliary parameter); its exact form is not relevant here.  Leng's deduction of these bounds goes by way of certain \emph{Siegel zero approximants} $\Lambda_{\siegel}, \mu_{\siegel}$ of $\Lambda, \mu$, whose particular forms are (again) not relevant for us.  Leng shows in his Theorem 7 that for each $s \in \N$ there is some $c_s>0$ such that $$\|\Lambda-\Lambda_{\siegel}\|_{U^{s+1}[N]},~\|\mu-\mu_{\siegel}\|_{U^{s+1}[N]}\ll_{s} \exp(-(\log N)^{c_s}).$$
He then applies\footnote{Leng inserts Siegel's Theorem into his application of \cite[Theorem 2.5]{TaoTeravainen}, so the version that he uses in his paper looks slightly different from the version stated in \cite{TaoTeravainen}.  This use of Siegel's Theorem makes the implied constants ineffective.}  a result of Tao and Terav\"av\"ainen \cite[Theorem 2.5]{TaoTeravainen} which says that $$\|\Lambda_{\siegel}-\Lambda_{\cramer,z}\|_{U^{s+1}[N]},~\|\mu_{\siegel}\|_{U^{s+1}[N]} \ll_{s,B} (\log N)^{-B}.$$
The desired Gowers-norm control then follows from the Triangle Inequality for Gowers norms.

Unsurprisingly, we also need the analogous statement $$\|\lambda\|_{U^{s+1}[N]} \ll_{s,B} (\log N)^{-B}$$
for the Liouville function.  This can be deduced from the Gowers-norm control for the M\"obius function as follows.  As usual, choose a prime $5N \leq M \leq 10N$, and work in $\Z/M\Z$ instead of $\Z$; for ease of notation, write $\lambda':=\lambda \cdot {\bf 1}_{[N]}$ and $\mu':=\mu \cdot {\bf 1}_{[N]}$.  Then $$\|\lambda\|_{U^{s+1}[N]} \asymp_s \|\lambda'\|_{U^{s+1}[N]} \asymp_s \|\lambda'\|_{U^{s+1}(\Z/M\Z)},$$ and likewise for $\mu$ (where we evaluate the functions on $\Z/M\Z$ by first lifting to $[M] \subseteq \Z$).
Using the formula $\lambda(n)=\sum_{r: r^2|n} \mu(n/r^2)$ (which applies equally well to the truncations $\lambda', \mu'$), we expand
\begin{align*}
\|\lambda'\|_{U^{s+1}(\Z/M\Z)}^{2^{s+1}} &=\E_{n, h_1, \ldots, h_{s+1} \in \Z/M\Z} \prod_{\epsilon \in \{0,1\}^{s+1}} \lambda'(n+\epsilon \cdot h)\\
 &=\E_n \lambda'(n) \E_{h_1, \ldots, h_{s+1}} \prod_{0 \neq \epsilon \in \{0,1\}^{s+1}} \lambda'(n+\epsilon \cdot h)\\
 &=\E_n \sum_{r: r^2|n} \mu'(n/r^2) \E_{h_1, \ldots, h_{s+1}} \prod_{0 \neq \epsilon \in \{0,1\}^{s+1}} \lambda'(n+\epsilon \cdot h)\\
 &=M^{-1}\sum_{r=1}^{\sqrt{M}} \sum_{m=1}^{M/r^2} \mu'(m) \E_{h_1, \ldots, h_{s+1}} \prod_{0 \neq \epsilon \in \{0,1\}^{s+1}} \lambda'(mr^2+\epsilon \cdot h).
\end{align*}
Let $C>0$ be a constant to be chosen later.  By the Triangle Inequality, the total contribution of $r>(\log M)^C$ is at most
$$M^{-1} \sum_{r=(\log M)^C}^\infty \frac{M}{r^2} \ll (\log M)^{-C}\ll (\log N)^{-C}.$$
Now, the contribution of a single value $r \leq (\log M)^C$ is
\begin{multline*}
M^{-1}\sum_{m=1}^{M/r^2} \mu'(m) \E_{h_1, \ldots, h_{s+1}} \prod_{0 \neq \epsilon \in \{0,1\}^{s+1}} \lambda'(mr^2+\epsilon \cdot h)\\
=\E_{m \in \Z/M\Z}\mu'(m) \cdot {\bf 1}_{[M/r^2]}(m) \E_{h_1, \ldots, h_{s+1}} \prod_{0 \neq \epsilon \in \{0,1\}^{s+1}} \lambda'(r^2(m+\epsilon \cdot h));
\end{multline*}
in passing to the second line we replaced the variables $h_i$ with the variables $h'_i:=r^{-2} h_i$, which still range over $\Z/M\Z$.  The Gowers--Cauchy--Schwarz Inequality (see, e.g., \cite[Exercise 1.3.19]{tao2012higher}), applied to the function $\mu' \cdot {\bf 1}_{[M/r^2]}$ and the dilated functions $\lambda'(r^2 \times \cdot)$, gives that the last centered quantity is of size at most $$\|\mu' \cdot {\bf 1}_{[M/r^2]}\|_{U^{s+1} (\Z/M\Z)} \cdot \|\lambda'(r^2 \times \cdot)\|_{U^{s+1} (\Z/M\Z)}^{2^{s+1}-1} \leq \|\mu' \cdot {\bf 1}_{[M/r^2]}\|_{U^{s+1} (\Z/M\Z)}.$$
Recalling that $\log(M/r)\asymp_C \log N$ and using the Gowers-norm control on $\mu$, we find that
$$\|\mu' \cdot {\bf 1}_{[M/r^2]}\|_{U^{s+1} (\Z/M\Z)} \asymp_s r^{-(s+1)} \|\mu\|_{U^{s+1}[M/r^2]} \ll_{s,C,\kappa} r^{-(s+1)} (\log N)^{-\kappa}$$
for all $\kappa>0$.  The sum over $r \leq (\log M)^B$ is easily $\ll_{s,C,\kappa} (\log N)^{-\kappa}$.  Taking $C=\kappa:=2^{s+1}B$, we conclude that
$\|\lambda\|_{U^{s+1}[N]} \asymp_s \|\lambda'\|_{U^{s+1}(\Z/M\Z)} \ll_{s,B} (\log N)^{-B}$, as desired.

The aforementioned Gowers uniformity bounds are stated for $\Lambda,\Lambda_{\cramer,z}, \mu$ defined to vanish on negative inputs.  The Triangle Inequality for the Gowers norms immediately gives bounds of the same quality for the extensions of these functions to negative integers via $\Lambda(-n):=\Lambda(n)$, etc.

We now have the setup for the deduction of Theorem~\ref{thm:james-strengthened}.  Consider the multilinear operator
$$F(f_1, \ldots, f_t):=\sum_{\vec n \in \Omega \cap \Z^m} \prod_{i=1}^t f_i(\psi_i(\vec n)).$$
Set $Y=Y(N):=(m+1)(\log N)^{A+1}N$, so that $\psi_i([-N,N]^m) \subseteq [-Y,Y]$ for all $i$.  Standard applications of the Cauchy--Schwarz Inequality and monotonicity properties of box norms (see, e.g., \cite[Exercise 1.3.23]{tao2012higher} for the Cauchy--Schwarz applications, \cite[Appendix C]{GT2} for handling the convex body, and \cite[Lemma 3.5(iv)]{kravitz2024quantitative} for the polynomial dependence on the coefficient sizes) show that for any functions $f_i: [-Y,Y] \to \C$, we have
$$F(f_1, \ldots, f_t) \ll_t N^m (\log N)^{O_t(A)} \min_{1 \leq i \leq t} \left(\|f_i\|_{U^{t-1}[\pm Y]} \cdot \prod_{j \neq i}\|f_j\|_{L^\infty} \right).$$
Inserting the Gowers uniformity bounds on $\mu,\lambda$ from above, we deduce that
$$F(\mu, \ldots, \mu),~ F(\lambda, \ldots, \lambda) \ll_{t,B} N^m(\log N)^{O_t(A)} \cdot (\log N)^{-B,}$$
which is $\ll_{t,A}N^m(\log N)^{-A}$ if $B$ is sufficiently large depending on $t,A$. This completes the proof of the second statement of Theorem~\ref{thm:james-strengthened}.

It remains to prove the von Mangoldt part of Theorem~\ref{thm:james-strengthened}.   By expanding $\Lambda=\Lambda_{\cramer,z}+(\Lambda-\Lambda_{\cramer,z})$, we can write $F(\Lambda, \ldots, \Lambda)$ as $F(\Lambda_{\cramer,z}, \ldots, \Lambda_{\cramer,z})$ plus a sum of $O_t(1)$ term of the form $F(g_1, \ldots, g_t)$, where each $g_i \in \{\Lambda, \Lambda_{\cramer,z}, \Lambda-\Lambda_{\cramer,z}\}$ and there is at least one index $i$ with $g_i=\Lambda-\Lambda_{\cramer,z}$.  The functions $\Lambda, \Lambda_{\cramer,z}$ are pointwise $\ll\log Y$ (the latter due to \cite[Lemma 2.4]{TaoTeravainen}).  Thus the considerations of the previous paragraph, the Triangle Inequality, and our Gowers uniformity bound on $\Lambda-\Lambda_{\cramer,z}$ (with $B$ suitably large in terms of $t,A$) together give
\begin{align*}
F(\Lambda, \ldots, \Lambda) &=F(\Lambda_{\cramer,z}, \ldots, \Lambda_{\cramer,z})+O_{t}(N^m (\log N)^{O_t(A)}(\log Y)^{t-1}\|\Lambda-\Lambda_{\cramer,z}\|_{U^{t-1}[\pm Y]})\\
 &=F(\Lambda_{\cramer,z}, \ldots, \Lambda_{\cramer,z})+O_{t,A}(N^m(\log N)^{-A}).
\end{align*}
Tao and Ter\"av\"ainen \cite{TaoTeravainen} have used sieve methods to estimate this main term to good precision.  In particular, their Theorem 5.2 (the coefficients of our linear forms are comfortably small enough) gives that
$$F(\Lambda_{\cramer,z}, \ldots, \Lambda_{\cramer,z})=\fS_{\Psi}\cdot \vol(\Omega)+O_{m,t}(N^m \exp(-(\log N)^{4/5})).$$
Combining the last two equations completes the proof of Theorem~\ref{thm:james-strengthened}.

\subsection{Application to random polynomials}
Theorem~\ref{thm:james-strengthened} is related to average correlations of random polynomials because for $n$ a fixed natural number and $f(x)=a_dx^d+\cdots+a_0$ a random degree-$d$ polynomial, we can interpret $f(n)$ as a linear form in the random variables $a_0, \ldots, a_d$ whose coefficients are powers of $n$.  The following theorem is the technical tool underpinning the arguments in the rest of the paper.  The division into residue classes for the von Mangoldt statement will be useful for later computations involving truncated singular series.

\begin{thm}\label{thm:GT-input}
Let $t \in \N$ and $d \geq 1$, and let $\delta,A>0$.  Let $X>0$ be a large parameter, and set $H:=\exp(X^\delta)$.  Let $n_1, \ldots, n_t$ be distinct positive integers of size $|n_i| \leq X$, and let $M \leq (\log H)^A$ be a natural number.  Then the following holds.

First, for any polynomial $f_0 \in \Z[x]$ of degree at most $d$, we have the asymptotic
\begin{equation*}
        \E_{\substack{f \in \mathcal{P}(d,H)\\ f\equiv f_0 \bmod M}}\prod_{i=1}^t \Lambda(f(n_i)) =\mathfrak{S}_{\Vec{n},f_0 \bmod M} +O_{t,d,\delta,A}((\log H)^{-A}),
     \end{equation*}
where the singular series $\fS_{\Vec{n},f_0 \bmod M}$ is defined by
\begin{equation*}
    \fS_{\Vec{n},f_0 \bmod M}:= \left(\prod_{p\mid M} \prod_{i=1}^t \frac{\mathbf{1}_{f_0(n_i)\neq 0 \bmod p}}{1-1/p}\right) \cdot \left(\prod_{p\nmid M} \frac{p^{-(d+1)}\#\{\Vec{a}\in \F_p^{d+1}:  a_0+a_1n_i+\hdots + a_dn_i^d \in \F_p^\times~\forall i\}}{(1-1/p)^t}\right).
\end{equation*}
Second, we have the cancellation 
\begin{equation*}
        \E_{f \in \mathcal{P}(d,H)}\prod_{i=1}^t \lambda(f(n_i)) = O_{t,d,\delta,A}((\log H)^{-A}).
     \end{equation*}
\end{thm}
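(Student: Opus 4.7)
The plan is to reduce both statements to Theorem~\ref{thm:james-strengthened} by exploiting the fact that while $f(n_i)$ is a polynomial of degree $d$ in $n_i$, it is a \emph{linear} form in the coefficient vector $\vec a = (a_0,\ldots,a_d)$ of $f$. Concretely, I will take $m = d+1$ variables $\vec a$ and consider the $t$ linear forms $\psi_i(\vec a) := a_0 + a_1 n_i + \cdots + a_d n_i^d$; the corresponding direction vectors $\dot\psi_i = (1, n_i,\ldots, n_i^d)$ are pairwise linearly independent (since $n_i \neq n_j$ and the first coordinate is $1$), with coefficient sizes bounded by $X^d \leq (\log H)^{d/\delta}$. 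Setting $N := H$ (so that $\log N = X^\delta$), Theorem~\ref{thm:james-strengthened} applies with any parameter $A' = A'(A,d,\delta)$ chosen large enough.

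For the Liouville statement, I apply Theorem~\ref{thm:james-strengthened} directly on the convex set $\Omega := [-H,H]^{d+1}$, which contains $\asymp H^{d+1}$ lattice points matching $|\mathcal{P}(d,H)|$. Dividing the resulting cancellation bound $\ll H^{d+1}(\log H)^{-A'}$ by $|\mathcal{P}(d,H)|$ yields the claimed bound $O((\log H)^{-A})$.

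For the von Mangoldt statement, I change variables to handle the congruence condition: take a canonical representative of $f_0$ with coefficients in $[0,M)$ and write $a_j = f_{0,j} + M b_j$. The condition $f \equiv f_0 \pmod M$ with $|a_j|\leq H$ then corresponds to $\vec b$ ranging over a product of intervals $\Omega' \subset [-2H/M, 2H/M]^{d+1}$ of volume $\asymp (H/M)^{d+1}$, and the linear forms become
\[
\widetilde\psi_i(\vec b) := f_0(n_i) + M\bigl(b_0 + b_1 n_i + \cdots + b_d n_i^d\bigr),
\]
with direction vectors $(M, Mn_i,\ldots, Mn_i^d)$ still pairwise linearly independent, coefficient sizes $\leq MX^d \leq (\log H)^{A + d/\delta}$, and constant terms $|f_0(n_i)| \ll MX^d$ satisfying the same bound. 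Since $M \leq (\log H)^A \ll H^{1/2}$, we have $\log(H/M) \asymp \log H$, so Theorem~\ref{thm:james-strengthened} applies (with $N := H/M$ and suitably inflated $A'$) to give a main term $\mathfrak{S}_{\widetilde\Psi}\cdot \vol(\Omega')$ plus an acceptable error; dividing by $\#\{f \in \mathcal{P}(d,H) : f \equiv f_0 \bmod M\} \asymp (H/M)^{d+1}$ leaves $\mathfrak{S}_{\widetilde\Psi}$ as the main term.

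Finally, I verify that $\mathfrak{S}_{\widetilde\Psi} = \mathfrak{S}_{\vec n, f_0 \bmod M}$ by analyzing the Euler product prime-by-prime. For $p \nmid M$, the affine substitution $a_j \mapsto f_{0,j} + M b_j$ is a bijection of $\F_p^{d+1}$, so the local factor at $p$ equals the local factor of the original system in $\vec a$, matching the second product in $\mathfrak{S}_{\vec n, f_0 \bmod M}$. For $p \mid M$, the forms $\widetilde\psi_i$ reduce modulo $p$ to the constants $f_0(n_i)$, which are in $\F_p^\times$ iff $f_0(n_i) \not\equiv 0 \pmod p$, producing exactly the factor $\prod_i \mathbf{1}_{f_0(n_i)\neq 0\bmod p}/(1-1/p)$. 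The main (mild) obstacle is ensuring the coefficient-size hypothesis of Theorem~\ref{thm:james-strengthened} is met: this is where $H = \exp(X^\delta)$ enters, since we need $X^d \leq (\log H)^{A'}$, which forces $A' \geq d/\delta$ and thus makes the implied constant depend on $\delta$; this is accommodated by invoking Theorem~\ref{thm:james-strengthened} with parameter $A'$ large in terms of $A, d, t, \delta$, as the theorem permits.
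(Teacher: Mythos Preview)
Your proposal is correct and follows essentially the same approach as the paper's own proof: reduce to Theorem~\ref{thm:james-strengthened} via the linear forms $\psi_i(\vec a)=a_0+a_1n_i+\cdots+a_dn_i^d$, handle the congruence by the affine change of variables $a_j=f_{0,j}+Mb_j$, and identify the resulting singular series with $\mathfrak{S}_{\vec n,f_0\bmod M}$. Your prime-by-prime verification of the singular series identity is in fact more explicit than the paper's, which simply asserts that $\mathfrak{S}_{\vec n,f_0\bmod M}$ is a rewriting of $\mathfrak{S}_\Psi$.
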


\begin{proof}
We start with the Liouville statement since it is simpler.  Write $f(x)=a_dx^d+\cdots a_0$, where $a_0, \ldots, a_d$ range over the integers in $[-H,H]$.  Then we can express
$$\E_{f \in \mathcal{P}(d,H)}\prod_{i=1}^t \lambda(f(n_i))=\frac{1}{(2H+1)^{d+1}}\sum_{\vec a \in [-H,H]^{d+1}} \prod_{i=1}^t \lambda(\psi_i(\vec a)),$$
where the homogeneous linear forms $\psi_i=\dot \psi_i$ are given by
$$\psi_i(\vec a):=a_0+n_i a_1+\cdots+n_i^d a_d.$$
Since the $n_i$'s are distinct, the non-vanishing of the Vandermonde determinant guarantees that $\dot \psi_i$'s are in fact jointly linearly independent (not merely pairwise linearly independent).  The coefficients of the linear forms have size at most $X^d=(\log H)^{\delta^{-1}d}$, where we view $\delta^{-1} d$ as a constant.  Thus, for $A \geq \delta^{-1}d$, Theorem~\ref{thm:james-strengthened} tells us that
$$\E_{f \in \mathcal{P}(d,H)}\prod_{i=1}^t \lambda(f(n_i))=O_{t,d,\delta,A}((\log H)^{-A}),$$
as desired. 

We now turn to the von Mangoldt statement.  Without loss of generality, we may take $f_0$ to be of the form $f_0(x)=b_dx^d+\cdots+b_0$ for some integers $1 \leq b_0, \ldots, b_d \leq M$.  Then we can express $f$ as
$$f(x)=(b_d+Ma_d)x^d+\cdots+(b_0+Ma_0),$$
where each $a_j$ ranges over the integers in $I_j=I_j(H, f_0\bmod M) := \left[\frac{-H-b_j}{M},\frac{H-b_j}{M}\right]$.  Thus we can express
$$\E_{\substack{f \in \mathcal{P}(d,H)\\ f\equiv f_0 \bmod M}}\prod_{i=1}^t \Lambda(f(n_i))=(1+O(M/H))\left(\frac{M}{2H}\right)^{d+1}\sum_{\vec a \in \prod_{j=0}^d I_j} \prod_{i=1}^t \Lambda(\psi_i(\vec a)),$$
where now our linear forms are given by
$$\psi_i(\vec a):=Ma_0+Mn_i a_1+\cdots+Mn_i^d a_d+(b_0+b_1 n_i+\cdots+b_d n_i^d).$$
Again the homogeneous parts are linearly independent by Vandermonde, and the coefficients are of size $\ll MX^d=(\log H)^{A+\delta^{-1}d}$.  Applying Theorem~\ref{thm:james-strengthened} with $N=2H/M$ (say) and some $A' \geq A+\delta^{-1}d$, we find that
$$\E_{\substack{f \in \mathcal{P}(d,H)\\ f\equiv f_0 \bmod M}}\prod_{i=1}^t \Lambda(f(n_i))=(1+O(M/H))\fS_{\Psi}+O_{t,d,\delta,A}((\log H)^{-A})=\fS_{\Psi}+O_{t,d,\delta,A}((\log H)^{-A}).$$
It remains only to observe that the singular series $\fS_{\vec n, f_0 \bmod M}$ is a rewriting of the singular series $\fS_{\Psi}$ from Theorem~\ref{thm:james-strengthened}.
\end{proof}

\section{Averaged polynomial Chowla}\label{sec: poly chowla} 
The goal of this section is to prove Theorem~\ref{thm: sign patterns}.  To illustrate our methods in a simpler setting, we start with a proof of Theorem~\ref{thm: moments of polynomial Chowla}.

\subsection{Liouville sums}

For the proof of Theorem~\ref{thm: moments of polynomial Chowla}, we need the following combinatorial fact about Gaussian moments (proven, e.g., in \cite{Wilson}). 
\begin{lemma}\label{lem: Gaussian moment coefficient}
Let $k\in \N$. Then as $X\rightarrow\infty$, we have
    \begin{equation*}
        \frac{1}{X^{k/2}}\sum_{u=1}^k \sum_{\substack{\ell_1,\hdots,\ell_u\geq 1 \\ 2\mid \ell_i \\ \sum_{i=1}^u \ell_i = k}} \frac{k!}{\ell_1! \cdots \ell_u! } \sum_{1\leq n_1< \hdots < n_u\leq X}1 = C_k + O_k(X^{-1}).
    \end{equation*}
\end{lemma}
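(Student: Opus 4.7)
Proof proposal for Lemma~\ref{lem: Gaussian moment coefficient}.

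The plan is a direct asymptotic evaluation, splitting the sum according to the number $u$ of parts. First I would dispense with the odd case: if $k$ is odd, then the constraint $\sum_i \ell_i = k$ with all $\ell_i$ even has no solutions, so the left-hand side is identically zero, matching $C_k = 0$.

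From now on assume $k$ is even. I would rewrite the inner sum in closed form as $\sum_{1 \le n_1 < \cdots < n_u \le X} 1 = \binom{X}{u}$, and note $\binom{X}{u} = X^u/u! + O_u(X^{u-1})$. Because each $\ell_i \ge 2$, the outer index satisfies $u \le k/2$, and the extreme case $u = k/2$ is the only one that survives division by $X^{k/2}$ in the limit. When $u < k/2$, we have $\binom{X}{u} \le X^u/u! \le X^{k/2 - 1}/u!$, and the number of compositions $(\ell_1,\dots,\ell_u)$ of $k$ into even parts is $O_k(1)$, so this entire block contributes $O_k(X^{-1})$ after normalization.

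The remaining case $u = k/2$ forces $\ell_1 = \cdots = \ell_u = 2$, giving a single composition with multinomial coefficient $k!/(2!)^{k/2} = k!/2^{k/2}$. Thus the $u = k/2$ contribution equals
\begin{equation*}
\frac{1}{X^{k/2}} \cdot \frac{k!}{2^{k/2}} \cdot \binom{X}{k/2}
= \frac{k!}{2^{k/2} (k/2)!} + O_k(X^{-1}).
\end{equation*}
Finally, I would verify the elementary identity $k!/(2^{k/2}(k/2)!) = (k-1)!!$ for even $k$ (a routine induction, or the combinatorial observation that both count perfect matchings of a $k$-element set). Since $C_k = k!! \cdot \mathbf{1}_{k \text{ even}} = (k-1)!!$ for even $k$ in the convention used (as can be seen from the $k = 2, 4, 6$ values $1, 3, 15$), the two sides agree up to $O_k(X^{-1})$.

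No step poses a genuine obstacle; the whole lemma is bookkeeping once one observes that the constraint $\ell_i \ge 2$ forces $u \le k/2$ with equality only when every $\ell_i = 2$. The only mild subtlety is the combinatorial identification of $k!/(2^{k/2}(k/2)!)$ with the Gaussian moment $C_k$, which is standard.
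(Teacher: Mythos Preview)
Your proof is correct and self-contained; the paper itself does not supply a proof but merely cites \cite{Wilson}. Your observation that the paper's displayed definition $C_k := k!! \cdot \mathbf{1}_{k \text{ even}}$ must be read as $(k-1)!!$ for even $k$ (i.e., the standard Gaussian moments $1,3,15,\ldots$) is also right---that appears to be a typo in the paper.
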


\begin{proof}[Proof of Theorem \ref{thm: moments of polynomial Chowla}]
    Fix $d\geq 1$. We expand the quantity of interest as
    \begin{equation*}
        \E_{f \in \mathcal{P}(d,H)} \left(\frac{1}{X^{1/2}}\sum_{n\leq X} \lambda(f(n))\right)^k = \E_{f \in \mathcal{P}(d,H)} \frac{1}{X^{k/2}}\sum_{u=1}^k \sum_{\substack{\ell_1,\hdots,\ell_u\geq 1\\ \sum_{i=1}^u \ell_i = k}} \frac{k!}{\ell_1!\cdots \ell_u! }\sum_{1\leq n_1< \hdots < n_u\leq X} \prod_{i=1}^u \lambda(f(n_i))^{\ell_i}.
    \end{equation*}
Since $\lambda(n)^2=1$ for all $n$, we care only about the parities of the $\ell_i$'s.  We split the sum according to whether or not there is some odd $\ell_i$: Write
\begin{equation*}
    \E_{f \in \mathcal{P}(d,H)}\left(\frac{1}{X^{1/2}}\sum_{n\leq X} \lambda(f(n))\right)^k = M_k(X;H)+ \sum_{v=1}^k \sum_{\substack{\ell_1,\hdots,\ell_v\geq 1 \\ 2\nmid \ell_i \\ \sum_{i=1}^v \ell_i \leq k}} E_k(\ell_1,\hdots,\ell_v;X;H),
\end{equation*}
where the main term is
\begin{equation*}
    M_k(X;H):= \E_{f \in \mathcal{P}(d,H)}\frac{1}{X^{k/2}}\sum_{u=1}^k \sum_{\substack{\ell_1,\hdots,\ell_u\geq 1 \\ 2\mid \ell_i \\ \sum_{i=1}^u \ell_i = k}} \frac{k!}{\ell_1!\cdots \ell_u!} \sum_{1\leq n_1< \hdots < n_u\leq X}1, 
\end{equation*}
and for for odd $\ell_1, \ldots, \ell_v$ we have defined the error term $E_k(\ell_1,\hdots,\ell_v;X;H)$ to be
\begin{equation*}
     \E_{f \in \mathcal{P}(d,H)} \frac{1}{X^{k/2}}\sum_{v'=1}^{k-v} \sum_{\substack{\ell_1',\hdots,\ell_{v'}'\geq 1 \\ \sum_{i=1}^{v'} \ell_i' + \sum_{j=1}^v \ell_j = k \\ 2\mid \ell_i'}}\frac{k!}{\ell_1!\cdots \ell_v! \cdot \ell_1'! \cdots \ell_{v'}!} \sum_{1\leq n_1< \hdots < n_v \leq X} \prod_{i=1}^v \lambda(f(n_i)).
\end{equation*}
Lemma \ref{lem: Gaussian moment coefficient} tells us that 
\begin{equation*}
    M_k(X;H) = C_k + O_k(X^{-1}),
\end{equation*}
and we will be done once we bound the error terms (of which there are only $O_k(1)$) as
\begin{equation*}
    E_k(\ell_1,\hdots,\ell_v;X;H) \ll_{d,\delta,k} X^{-1}. 
\end{equation*}

The main idea is that we can swap the order of summation and then apply Theorem~\ref{thm:GT-input} to the resulting averages over $f$.  To this end, write
\begin{equation*}
    E_k(\ell_1,\hdots,\ell_v;X;H) = \frac{1}{X^{k/2}} \sum_{v'=1}^{k-v} \sum_{\substack{\ell'_1,\hdots,\ell_{v'}'\geq 1 \\ \sum \ell_i' + \sum \ell_j = k \\ 2\mid \ell_i'}} \frac{k!}{\ell_1! \cdots \ell_v! \cdot \ell_1'! \cdots \ell_{v'}'!} \sum_{1\leq n_1< \hdots < n_v \leq X} \E_{f \in \mathcal{P}(d,H)} \prod_{i=1}^{v} \lambda(f(n_i)).
\end{equation*}
Due to the choice of $H$ and the upper bound on the $n_i$'s, Theorem \ref{thm:GT-input} lets us control the inner averages over $f$ as
$$\E_{f \in \mathcal{P}(d,H)} \prod_{i=1}^{v} \lambda(f(n_i))\ll_{d,\delta,v,A} (\log H)^{-A},$$
which is $\ll_{d,\delta,k,B}X^{-B}$ (by choosing $A$ suitably depending on $\delta,B$).  Summing over the at most $X^v$ possibilities for the $n_i$'s and the $O_k(1)$ choices for the remaining parameters, we conclude that each
$$E_k(\ell_1,\hdots,\ell_v;X;H) \ll_{d,\delta,k,B} X^{-k/2+v-B},$$
which is safely $\ll_{d,\delta,k} X^{-1}$ for a suitable choice of $B$ (e.g., $B=k+1$).  This completes the proof.

\end{proof}

\subsection{Sign patterns}
The proof of Theorem~\ref{thm: sign patterns} follows the same general strategy as the proof of Theorem~\ref{thm: moments of polynomial Chowla}, but we must replace Lemma~\ref{lem: Gaussian moment coefficient} with more complicated counting arguments.  The connection goes as follows.  It is straightforward to show that the equidistribution of Liouville sign patterns is equivalent to Chowla's Conjecture for polynomials of the form $f(x)=(x+a_1) \cdots (x+a_t)$; more precisely, the equidistribution of Liouville sign patterns of length $s$ is equivalent to Chowla's Conjecture for all polynomials $f(x)=(x+a_1) \cdots (x+a_t)$ with $1 \leq a_1<\cdots<a_t \leq s$.  The forward implication is immediate, and the backward implication follows from the important observation that for any $(\epsilon_1, \ldots, \epsilon_s) \in \{-1,1\}^s$, we have
\begin{equation}\label{eq:sign-patterns-FOIL}
2^{-s} \prod_{i=1}^s (1+\epsilon_i \lambda(n_i))=\begin{cases}
1, &\text{if } \lambda(n_i)=\epsilon_i
 \text{ for all $i$};\\
0, &\text{otherwise}.
 \end{cases}
\end{equation}
In the setting of Theorem~\ref{thm: sign patterns}, this identity implies that
\begin{equation}\label{eq:sign-patterns-foil-outcome}
\#\{n \leq X: (\lambda(f(n+1)), \ldots, \lambda(f(n+s)))=(\epsilon_1, \ldots, \epsilon_s)\}=2^{-s} \sum_{n \leq X} \prod_{i=1}^s (1+\epsilon_i \lambda(f(n+i)));
\end{equation}
the advantage of this maneuver is that expanding out the product and averaging over $f$ gives a sum of several expressions, each of the type that we analyzed in the previous subsection.

\begin{proof}[Proof of Theorem~\ref{thm: sign patterns}]
We wish to estimate the moments of
\begin{equation}\label{eq:sign-patterns-main-quantity}
\frac{\#\{n \leq X: (\lambda(f(n+1)), \ldots, \lambda(f(n+s)))=(\epsilon_1, \ldots, \epsilon_s)\}-2^{-s}X}{X^{1/2}}
\end{equation}
as $f$ ranges over $\mathcal{P}(d,H)$.  Expanding the product in \eqref{eq:sign-patterns-foil-outcome}, we can express this quantity as
$$\frac{2^{-s}}{X^{1/2}}\sum_{n \leq X}\sum_{\emptyset \neq T \subseteq [s]} \prod_{i \in T} \epsilon_i \lambda(f(n+i)));$$
notice that $T=\emptyset$ corresponds to the ``main term'' $2^{-s}X$ that we subtracted off.  Now the $k$-th moment of this random variable is
\begin{equation}\label{eq:sign-pattern-moment-expansion}
\left(\frac{2^{-s}}{X^{1/2}}\right)^k \E_{f \in \mathcal{P}(d,H)} \left( \sum_{n \leq X} \sum_{\emptyset \neq T \subseteq [s]}\prod_{i \in T} \epsilon_i \lambda(f(n+i))) \right)^k,
\end{equation}
which we can expand as
$$\left(\frac{2^{-s}}{X^{1/2}}\right)^k\sum_{n_1, \ldots, n_k \leq X} \sum_{\emptyset \neq T_1, \ldots, T_k \subseteq [s]} \E_{f \in \mathcal{P}(d,H)} \prod_{(i,j): i \in T_j} \epsilon_i \lambda(f(n_j+i)).$$
For each fixed choice of $n_1, \ldots, n_k, T_1, \ldots, T_k$, we will estimate the contribution of the term
$$\prod_{(i,j): i \in T_j} \epsilon_i \cdot \E_{f \in \mathcal{P}(d,H)} \prod_{(i,j): i \in T_j} \lambda(f(n_j+i)).$$
As in the proof of Theorem~\ref{thm: moments of polynomial Chowla}, whether this contribution goes towards a main term or towards an error term depends on the form of the multiset $\{n_j+i: i \in T_j\}$.  Say that a multiset is \emph{even} if all of its elements appear with even multiplicity.  Theorem~\ref{thm:GT-input} tells us that
$$\E_{f \in \mathcal{P}(d,H)} \prod_{(i,j): i \in T_j} \lambda(f(n_j+i))=\begin{cases}
1, &\text{if the multiset $\{n_j+i: i \in T_j\}$ is even;}\\
O_{d,\delta,s,k,A}((\log H)^{-A}), &\text{otherwise}.
\end{cases}$$
Since there are at most $X^k 2^{sk}$ choices of $n_1, \ldots, n_k, T_1, \ldots, T_k$, the total contribution to \eqref{eq:sign-pattern-moment-expansion} of the non-even multisets is
$$\ll_{d,\delta, s, k, A}\left(\frac{2^{-s}}{X^{1/2}}\right)^k X^k 2^{sk} \cdot (\log H)^{-A},$$
which is comfortably $O_{d,\delta,s,k}(X^{-1})$ if we choose $A$ sufficiently large in terms of $\delta,k$.  We are left with the contribution of the even multisets, and we conclude that \eqref{eq:sign-pattern-moment-expansion} is equal to 
$$\left(\frac{2^{-s}}{X^{1/2}}\right)^k\sum_{\emptyset \neq T_1, \ldots, T_k \subseteq [s]} \left(\prod_{(i,j): i \in T_j} \epsilon_i \right) \cdot  \Bigl|\Bigl\{ 
\substack{\displaystyle (n_1, \ldots, n_k): \text{ the multiset}\\ \displaystyle \{n_j+i: i \in T_j\} \text{ is even}}
\Bigr\}\Bigr|+O_{d,\delta,s,k}(X^{-1}).$$ 

We pause to consider the analogous problem with $\lambda(f(1)), \lambda(f(2)), \ldots$ replaced by a sequence of independently random $\pm 1$'s.  Let $y(1), y(2), \ldots \in \{-1,1\}$ be chosen independently and uniformly at random.  The same argument that led to \eqref{eq:sign-pattern-moment-expansion} tells us that the $k$-th moment of the quantity
\begin{equation}\label{eq:random-sequence-comparison}
\frac{\#\{n \leq X: (y(n+1), \ldots, y(n+s))=(\epsilon_1, \ldots, \epsilon_s)\}-2^{-s}X}{X^{1/2}}
\end{equation}
is precisely
$$\left(\frac{2^{-s}}{X^{1/2}}\right)^k \E_{\vec{y}} \left( \sum_{n \leq X} \sum_{\emptyset \neq T \subseteq [s]}\prod_{i \in T} \epsilon_i y(n+i)) \right)^k,$$
where $\E_{\vec y}$ denotes the expectation over $\vec y=(y(1), \ldots, y({X+s}))$ drawn uniformly at random from $\{-1,1\}^{X+s}$
Expanding and swapping the order of summation gives
$$\left(\frac{2^{-s}}{X^{1/2}}\right)^k\sum_{n_1, \ldots, n_k \leq X} \sum_{\emptyset \neq T_1, \ldots, T_k \subseteq [s]} \prod_{(i,j):i\in T_j} \epsilon_i \cdot \E_{\vec{y}} \prod_{(i,j): i \in T_j} y(n_j+i).$$
The inner expectation is $1$ when the multiset $\{n_j+i: i \in T_j\}$ is even, and it is $0$ otherwise, so we can rewrite this $k$-th moment as
\begin{equation}\label{eq:sign-moments-clean}
\left(\frac{2^{-s}}{X^{1/2}}\right)^k\sum_{\emptyset \neq T_1, \ldots, T_k \subseteq [s]} \left(\prod_{(i,j): i \in T_j} \epsilon_i \right) \cdot  \Bigl|\Bigl\{ 
\substack{\displaystyle (n_1, \ldots, n_k): \text{ the multiset}\\ \displaystyle \{n_j+i: i \in T_j\} \text{ is even}}
\Bigr\}\Bigr|,
\end{equation}
which agrees (up to $+o(1)$) with what appeared in the Liouville setting.

Thus, if one of the quantities in \eqref{eq:sign-patterns-main-quantity}, \eqref{eq:random-sequence-comparison} has a limiting distribution as $X \to \infty$ that is determined by its moments, then these two quantities have the same limiting distribution.  Indeed, it is a consequence of the so-called ``$m$-dependent central limit theorem'' of Hoeffding and Robbins \cite{hoeffding1994central} that the limiting distribution of the quantity in \eqref{eq:random-sequence-comparison} is a Gaussian with mean zero; since Gaussians are determined by their moments, it follows that the quantity in \eqref{eq:sign-patterns-main-quantity} has the same limiting distribution.  The result of Hoeffding and Robbins also provides a way to compute the variance of this Gaussian in terms of certain covariances, and one can conclude the proof of Theorem~\ref{thm: sign patterns} by appealing to their work.  For the sake of completeness and simplicity, we will also provide a direct combinatorial argument for estimating the moments in \eqref{eq:sign-moments-clean}.

It will be convenient to use some language from graph theory for bookkeeping.  Fix a choice of $T_1, \ldots, T_k$, and consider the complete multipartite graph $G=G(T_1, \ldots, T_k)$ with partite sets $T_1, \ldots, T_k$.  For each choice of $n_1, \ldots, n_k$, we can obtain a subgraph $G'=G'(T_1, \ldots, T_k, n_1, \ldots, n_k)$ of $G$ by keeping the edge between the vertex $i_1$ in part $T_{j_1}$ and the vertex $i_2$ in part $T_{j_2}$ if and only if $n_{j_1}+i_1=n_{j_2}+i_2$.  In this language, the multiset $\{n_j+i: i \in T_j\}$ is even if and only if the graph $G'$ has a perfect matching.  Such a perfect matching can exist only when $|T_1|+\cdots+|T_k|$ is even, so we restrict our attention to choices of $T_1, \ldots, T_k$ for which this is the case.

Fix a subgraph $G^*$ of $G$ that contains a perfect matching.  We will estimate the number of tuples $(n_1, \ldots, n_k)$ for which $G'(T_1, \ldots, T_k, n_1, \ldots, n_k)=G^*$.  Consider the auxiliary graph $J$ with the vertex set $[k]$, where vertices $j_1, j_2$ are adjacent if and only if $G^*$ contains an edge connecting $T_{j_1}$ and $T_{j_2}$.  It is clear that $J$ has no isolated vertices (due to $G^*$ containing a perfect matching), so $J$ has some number $c \leq k/2$ of connected components.  The connected components of $J$ partition $[k]$.  For any connected component of $J$ with vertex set $U \subseteq [k]$, the corresponding edges of $G^*$ place linear constraints on $\{n_j: j \in U\}$, and specifying the value of $n_j$ for any single $j \in U$ determines the value of $n_j$ for every other $j \in U$; thus there are at most $X$ possible choices for the tuple $(n_j)_{j \in U}$.  In total, the number of tuples $(n_1, \ldots, n_j)$ with $G'=G^*$ is at most $X^c$.  There are $O_{s,k}(1)$ possibilities for graphs $G,G^*$, so the contribution to \eqref{eq:sign-moments-clean} of the $G^*$'s with $c<k/2$ is
$$\ll_{s,k}\left(\frac{2^{-s}}{X^{1/2}}\right)^k  X^{k/2-1} \ll_{s,k}(X^{-1}).$$

It remains to consider the contribution of the graphs $G^*$ such that the auxiliary graph $J$ has exactly $c=k/2$ connected components.  In this case, the graph $J$ must be a perfect matching, and for each each edge $(j_1,j_2)$ in $J$, the graph $G^*$ must contain a perfect matching between the vertex sets $T_{j_1}, T_{j_2}$.  This is possible exactly when the sets $T_{j_1}, T_{j_2}$ are translates of one another, in which case there are $X-O_s(1)$ choices for the pair $(n_{j_1}, n_{j_2})$.  (The corresponding edges of $G^*$ match the $\ell$-th smallest element of $T_{j_1}$ with the $\ell$-th smallest element of $T_{j_2}$ for all $\ell$.)  This gives $(X-O_s(1))^{k/s}$ choices for $(n_1, \ldots, n_s)$ such that $G'$ contains $G^*$, and by the considerations of the previous paragraph the contribution where $G'$ properly contains $G^*$ is negligible.   Thus,  the contribution to \eqref{eq:sign-moments-clean} of each such $G^*$ is
$$\left(\frac{2^{-s}}{X^{1/2}}\right)^k \left(\prod_{(i,j): i \in T_j} \epsilon_i \right) \cdot (X-O_s(1))^{k/2}+O_{s,k}(X^{-1})=2^{-sk} \left(\prod_{(i,j): i \in T_j} \epsilon_i \right)+O_{s,k}(X^{-1}).$$

The total number of possibilities for the graph $J$ is the number of perfect matchings of $[k]$, namely, $C_k=(k-1)!!$.  Consider the contribution to \eqref{eq:sign-moments-clean} of all of the choices of $T_1, \ldots T_k$ corresponding to a given perfect matching $J$; this is independent of $J$ (since the elements of $[k]$ can be relabeled), so we can estimate \eqref{eq:sign-moments-clean} as $C_k$ times the contribution from a single fixed $J$, say, the graph $J_0$ with the edges $(1,2), (3,4), \ldots, (k-1,k)$.  The sum over such tuples $(T_1, \ldots, T_k)$ splits as a product over the edges of $J_0$, and we can write \eqref{eq:sign-moments-clean} as 
$$C_k \cdot 2^{-sk} \cdot \left( \sum_{\substack{\emptyset \neq T_1,T_2 \subseteq [s]\\ \text{translates}}} \prod_{i \in T_1} \epsilon_i \cdot \prod_{i \in T_2} \epsilon_i  \right)^{k/2}+O_{s,k}(X^{-1}),\vspace{-.25cm}$$
as desired.
\end{proof}

\section{Moments for averaged Bateman--Horn}\label{sec:BH-moments}
In this section we establish Theorem \ref{thm: moments of BH}. 
 The methods are similar to those of the previous section, but the computation is considerably more involved due to the presence of a main term. Recall that $H = \exp(X^\delta)$ for some $0<\delta<1$.

 \begin{proof}[Proof of Theorem~\ref{thm: moments of BH}]
Set $w=w(X):=(\log X)/(\log\log X)$.  We want to bound the $k$-th moment
\begin{equation*}
    \E_{f \in \mathcal{P}(d,H)} \left(\frac{1}{X}\sum_{n\leq X} \Lambda(f(n))-\fS_f(w)\right)^k  = \E_{f \in \mathcal{P}(d,H)}\sum_{j=0}^k (-1)^j \binom{k}{j} \fS_f(w)^{k-j} X^{-j} \sum_{n_1,\hdots,n_j\leq X} \prod_{i=1}^j \Lambda(f(n_i)).
\end{equation*}
Switching the order of summation, we can rewrite this as
\begin{equation*}
 \sum_{j=0}^k (-1)^j \binom{k}{j} X^{-j} S_j(X;H),
\end{equation*}
where for each $0\leq j\leq k$ we have set
\begin{equation*}
    S_j(X;H) := \sum_{n_1,\hdots,n_j\leq X} \E_{f \in \mathcal{P}(d,H)} \fS_f(w)^{k-j} \prod_{i=1}^j \Lambda(f(n_i)).
\end{equation*}
Our intermediate goal is to show that for each $j\geq 1$, the sum $S_j(X;H)$ is dominated by the contribution where $n_1, \ldots, n_j$ are all distinct.  This distinctness will then facilitate the application of our main tool Theorem~\ref{thm:GT-input}.  The final step of the argument consists of exhibiting cancellation in the resulting sums of singular series.  The details are as follows.

For $1\leq \ell \leq j$, consider the contribution
\begin{equation*}
    S_{j,\ell}(X;H) := \sum_{\substack{n_1,\hdots,n_j\leq X \\ \#\{n_1,\hdots,n_j\}=\ell}} \E_{f \in \mathcal{P}(d,H)} \fS_f(w)^{k-j} \prod_{i=1}^j \Lambda(f(n_i))
\end{equation*}
from tuples $(n_1, \ldots, n_j)$ containing exactly $\ell$ distinct numbers.  There are at most $X^\ell$ choices for the \emph{set} $\{n_1, \ldots, n_j\}$, and each such set corresponds to $O_k(1)$ tuples $(n_1, \ldots, n_j)$, so in total we are summing over $O_k(X^\ell)$ tuples.  Lemma~\ref{lem: upper bound on truncated singular series} tells us that for any $\varepsilon>0$, we have $\fS_f(w) \ll_{d,k,\varepsilon}X^{\varepsilon/k}$ for all $f \in \mathcal{P}(d,H)$, so certainly $\fS_f(w)^{k-j} \ll_{d,k,\varepsilon}X^{\varepsilon}$.  Finally, we have
\begin{equation*}
    \Lambda(f(n)) \leq \log(2HX^d) \leq X^{\delta}+d\log X+\log(2)\ll_{d,\delta} X^\delta
\end{equation*}
for all $f \in \mathcal{P}(d,H)$ and $n \leq X$.  In total we have
$$S_{j,\ell}(X,H) \ll_{d,\delta,k,\varepsilon}X^{\ell+\varepsilon+j\delta}.$$
If $\ell<j$, then this exponent is at most $j-1+\varepsilon+k\delta$; since $k\delta<1$ by assumption, we can choose $\varepsilon>0$ sufficiently small (depending on $\delta,k$) that the exponent is strictly smaller than $j$, which gives
$$S_{j,\ell}(X;H) \ll_{d,\delta,k}X^{j-\gamma}$$
for some $\gamma=\gamma(\delta,k)>0$.

Our next goal is to evaluate $S_{j,j}(X;H)$ using Theorem \ref{thm:GT-input}. Fix a tuple $\Vec{n} = (n_1,\hdots,n_j)$ of distinct positive integers of size at most $X$, and consider the average
\begin{equation*}
    S_j(X,H;\Vec{n}) := \E_{f \in \mathcal{P}(d,H)} \fS_f(w)^{k-j} \prod_{i=1}^j \Lambda(f(n_i)).
\end{equation*}
Again splitting the expectation modulo $P=P(w)$ (see Section \ref{subsec: notation}), we obtain
\begin{equation*}
    S_j(X,H;\Vec{n}) = \E_{f_0 \bmod{P}}(1+O_d(P/H)) \cdot \fS_{f_0}(w)^{k-j} \E_{\substack{f \in \mathcal{P}(d,H)\\ f\equiv f_0 \bmod P}} \prod_{i=1}^j \Lambda(f(n_i)).
\end{equation*}
From $P\sim \exp(\log X /\log\log X)\ll X$ we see that $P,n_1,\hdots,n_j\ll_{\delta} \log(H)^{A'}$ for some $A'>0$, so we can use Theorem \ref{thm:GT-input} (with $M=P$) to evaluate the inner expectation.  We obtain
\begin{align*}
    S_{j}(X,H;\Vec{n}) &= \E_{f_0 \bmod{P}}(1+O_d(P/H)) \fS_{f_0}(w)^{k-j} (\fS_{\Vec{n},f_0\bmod P} + O_{d,k,A}((\log H)^{-2A}))\\
    &= \E_{f_0 \bmod{P}} \fS_{f_0}(w)^{k-j} \fS_{\Vec{n},f_0 \bmod P} + O_{d,k,A}((\log H)^{-A}),
\end{align*}
where we used Lemma \ref{lem: upper bound on truncated singular series} to pass to the second line. Putting everything together gives
\begin{align*}
    S_{j,j}(X;H) &= \sum_{\substack{n_1,\hdots,n_j\leq X \\ \textrm{distinct}}} S_j(X,H;\Vec{n}) \\
    &= \sum_{\substack{n_1,\hdots,n_j\leq X \\ \textrm{distinct}}} \E_{f_0 \bmod{P}}\fS_{f_0}(w)^{k-j}\fS_{\Vec{n},f_0\bmod P} + O_{d,k,A}(X^j\log(H)^{-A}).
\end{align*}
Since $\log(H)\gg X^{\delta}$, a suitable choice of $A$ lets us bound the error term by $O_{d,\delta,k}(X^{j-1})$. 

We now turn to the main term of $S_{j,j}(X;H)$.  We truncate the singular series $\fS_{\Vec{n},f_0\bmod P}$ and split the $n_i$'s into arithmetic progressions modulo $P$.  
The key to the calculation will be that the average over $\vec{n} \bmod{P}$ of the truncation of $\fS_{\Vec{n},f_0\bmod P}$ precisely matches $\fS_{f_0}(w)^j$.  Since this singular series $\fS_{\Vec{n},f_0\bmod P}$ converges absolutely, we may safely write 
\begin{equation*}
    \fS_{\Vec{n},f_0\bmod P} =(1+O(w^{-1})) \prod_{p\leq w} \prod_{i=1}^j \frac{\mathbf{1}_{f_0(n_i)\in \F_p^\times}}{1-1/p};
\end{equation*}
notice that the product depends only on $\vec{n} \bmod{P}$.  So our main term from above can be written as
\begin{equation*}
    (1+O(w^{-1})) \E_{f_0 \bmod P} \sum_{\Vec{m} \bmod P} \fS_{f_0}(w)^{k-j}  \prod_{p\leq w} \prod_{i=1}^j \frac{\mathbf{1}_{f_0(m_i)\in \F_p^\times}}{1-1/p} \sum_{\substack{n_1,\hdots,n_j\leq X \\ \textrm{distinct}\\ \Vec{n}\equiv \Vec{m}\bmod P}}1.
\end{equation*}
For each choice of $\vec{m}$, the number of $\vec n$'s in the sum is $$(X/P)^j+O_j(X^{j-1})=(1+O_k((\log X)^{-1}))(X/P)^j$$
(recall $P \sim \exp(\log X/\log\log X) \ll_\varepsilon X^\varepsilon$ for all $\varepsilon>0$), and we can simplify our main term as
\begin{equation*}
    (1+O_k(w^{-1}))X^j \E_{f_0 \bmod P}\E_{\Vec{m}\bmod P} \fS_{f_0}(w)^{k-j} \prod_{p\leq w} \prod_{i=1}^j \frac{\mathbf{1}_{f_0(m_i)\in \F_p^\times}}{1-1/p}. 
\end{equation*}
Here we have again used (the argument from) Lemma \ref{lem: upper bound on truncated singular series} to upper-bound $\fS_{f_0}(w)$ for the error term. 
Bringing the average over $\vec m$ inside the product over $p,i$, we find that
$$\E_{\Vec{m}\bmod P} \prod_{p\leq w} \prod_{i=1}^j \frac{\mathbf{1}_{f_0(m_i)\in \F_p^\times}}{1-1/p}=\prod_{p\leq w} \prod_{i=1}^j \frac{p^{-1}\#\{m_i\bmod p: f_0(m_i)\in \F_p^\times\}}{1-1/p}=\fS_{f_0}(w)^j.$$
Reinstating the average over $f_0$, we conclude that

\begin{equation*}
    S_{j,j}(X;H) = (1+O(w^{-1})) X^j \E_{f_0\bmod P} \fS_{f_0}(w)^{k} + O_{d,\delta,k}(X^{j-1/2})
\end{equation*}
for $j \geq 1$. Observe that $X^{j-1/2} = O(w^{-1})\cdot X^j \E_{f_0\bmod P}\fS_{f_0}(w)^k$, so we can drop the second error term here.

The only remaining term, corresponding to $j=0$, is
\begin{equation*}
    S_0(X;H) = \E_{f \in \mathcal{P}(d,H)}\fS_{f}(w)^{k}.
\end{equation*}
Since $P\ll_{\varepsilon} X^\varepsilon$ for all $\varepsilon>0$, we easily obtain
\begin{equation*}
    S_0(X;H) = \E_{f_0\bmod P} \fS_{f_0}(w)^{k} + O_\varepsilon(H^{-1}X^{\varepsilon}).
\end{equation*}
Recall that $H =\exp(X^{\delta})$, so this error is much smaller than $O(X^{-1/2}).$

Putting everything together, we have
\begin{align*}
\E_{f \in \mathcal{P}(d,H)} &\left(\frac{1}{X}\sum_{n\leq X}\Lambda(f(n))-\fS_f(w)\right)^k  = \sum_{j=0}^k (-1)^j \binom{k}{j} X^{-j} S_j(X;H)    \\
&= \sum_{j=0}^k (-1)^j \binom{k}{j} (1+O(w^{-1})) \E_{f_0\bmod P} \fS_{f_0}(w)^{k} + O_{d,\delta,k,A}(X^{-1/2}+(\log X)^{-A}).
\end{align*} 
The main terms cancel perfectly due to the identity $\sum_{j=0}^k (-1)^j \binom{k}{j}=0$, and the remaining error terms have size
\begin{equation*}
    \ll_{d,\delta,k} w^{-1} \E_{f_0\bmod P} \fS_{f_0}(w)^k+(\log X)^{-1} .
\end{equation*}
Lemma \ref{lem: upper bound on truncated singular series} gives $\fS_{f_0}(w)\ll \log w$ pointwise, and (with plenty of room to spare) we obtain the desired upper bound $O_{d,\delta,k}(w ^{-1+o(1)}) = O_{d,\delta,k}((\log X)^{-1+o(1)})$.
\end{proof}

\section{Prime tuples on average}
\label{sec:tuples}

Our goal is to prove the following refinement of Theorem \ref{thm: BH prime tuple hybrid}; the latter corresponds to the special case $w(X)=(\log X)/(\log \log X)$. 
\begin{thm}\label{thm: refined GOAL BH prime tuple hybrid}
Let $d \geq 1$ be an integer and let $0<\delta<1$ be a real.  Set $H=H(X):=\exp(X^\delta)$, and suppose that $w=w(X)\leq (\log X)/(\log\log X)$.  Let $1 \leq k<1/\delta$ be an integer, and let $\alpha>0$ be a real.  Then for any distinct integers $\ell_1,\ldots,\ell_k$ of size at most $X^{\alpha}$, we have
    $$\E_{f \in \mathcal{P}(d,H)}\left|\frac{1}{X}\sum_{n\leq X} \prod_{i=1}^k \Lambda(f(n+\ell_i)) - \fS_{f,\Vec{\ell}}\,(w)\right|^2 = O_{d,\delta,\alpha,k}(w^{-1+o_k(1)}).$$
\end{thm}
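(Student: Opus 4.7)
The plan is to adapt the strategy from the proof of Theorem~\ref{thm: moments of BH}, working with an $L^2$-expansion instead of a higher moment of a simpler quantity. First I would expand the square to obtain
$\E_{f \in \mathcal{P}(d,H)} \left|X^{-1}\sum_{n \leq X} \prod_{i=1}^k \Lambda(f(n+\ell_i)) - \fS_{f,\Vec{\ell}}(w)\right|^2 = T_1 - 2T_2 + T_3$,
where
$T_1 := X^{-2}\sum_{n_1, n_2 \leq X} \E_f \prod_{i=1}^k \Lambda(f(n_1+\ell_i))\Lambda(f(n_2+\ell_i))$,
$T_2 := X^{-1}\sum_{n \leq X} \E_f \fS_{f,\Vec{\ell}}(w) \prod_{i=1}^k \Lambda(f(n+\ell_i))$, and
$T_3 := \E_f \fS_{f,\Vec{\ell}}(w)^2$.
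Setting $P := \prod_{p \leq w} p$, the key observation is that $\fS_{f,\Vec{\ell}}(w)$ depends on $f$ only through $f \bmod P$, so I would condition on each residue class $f \equiv f_0 \bmod P$ and apply Theorem~\ref{thm:GT-input} to the inner averages. Note that $P \ll_\varepsilon X^\varepsilon$ for all $\varepsilon>0$, so $P$ comfortably satisfies the $M \leq (\log H)^A$ hypothesis of Theorem~\ref{thm:GT-input}.

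For $T_1$, split the pairs $(n_1, n_2)$ according to whether the $2k$ shifts $\{n_1+\ell_i, n_2+\ell_i\}$ are all distinct or contain a coincidence. A coincidence forces $n_2 - n_1 \in \{\ell_i - \ell_j : i \neq j\}$, giving $O_k(X)$ such pairs; at each, bounding the $r$ duplicated factors via $\Lambda(f(n))^2 \ll X^\delta \Lambda(f(n))$ and applying Theorem~\ref{thm:GT-input} to the remaining $2k - r$ distinct factors yields a contribution of size $\ll_k X^{-1 + k\delta + o(1)}$; this is negligible thanks to the hypothesis $k\delta < 1$, which is precisely where this condition enters the argument. For fully distinct shifts, Theorem~\ref{thm:GT-input} evaluates the inner expectation as the joint singular series $\fS_{\Vec{n},f_0 \bmod P}$ up to error $(\log H)^{-A}$. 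Crucially, the factor of this singular series at each prime $p \leq w$ splits as a product of an $n_1$-dependent factor and an $n_2$-dependent factor; averaging over $(n_1, n_2) \bmod P$ independently at each such $p$ reproduces exactly the two copies of the corresponding factor in $\fS_{f_0,\Vec{\ell}}(w)^2$. The tail product over $p > w$ contributes $1 + O_{d,k}(w^{-1})$ by absolute convergence of the Green--Tao singular series (each factor being $1 + O_{d,k}(p^{-2})$ once $p$ exceeds a constant depending only on $d,k$).

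For $T_2$, the same conditioning pulls $\fS_{f_0,\Vec{\ell}}(w)$ outside the inner expectation; applying Theorem~\ref{thm:GT-input} to the $k$ (automatically-distinct) shifts $n + \ell_i$ and then averaging over $n \bmod P$ gives $\E_{f_0 \bmod P}\fS_{f_0,\Vec{\ell}}(w)^2 \cdot (1 + O(w^{-1}))$ plus negligible errors. The term $T_3$ equals $\E_{f_0 \bmod P}\fS_{f_0,\Vec{\ell}}(w)^2$ exactly. Combining, the three main terms of $T_1, T_2, T_3$ appear with coefficients $1, -2, 1$ and cancel perfectly; what remains is $O(w^{-1}) \cdot \E_{f_0 \bmod P}\fS_{f_0,\Vec{\ell}}(w)^2$, which by Lemma~\ref{lem: upper bound on truncated singular series} is $\ll_k (\log w)^{2k}/w = w^{-1 + o_k(1)}$, giving the claimed bound.

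The main obstacle is the singular-series bookkeeping in the distinct-shift case: verifying that after conditioning on $f \bmod P$ and applying Theorem~\ref{thm:GT-input}, the resulting joint singular series factors across primes $p \leq w$ in the desired way upon averaging over $(n_1, n_2) \bmod P$, and that the tail product over $p > w$ yields a $1 + O(w^{-1})$ correction that is uniform over the parameters $f_0, \Vec{\ell}, n_1, n_2$ in play. Once this factorization is in hand, the cancellation between the three terms is purely formal and the upper bound reduces to Lemma~\ref{lem: upper bound on truncated singular series}.
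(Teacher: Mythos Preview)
Your proposal is correct and follows essentially the same route as the paper: expand the square into $T_1-2T_2+T_3$, separate diagonal from off-diagonal pairs in $T_1$ (invoking $k\delta<1$ for the diagonal), apply Theorem~\ref{thm:GT-input} to the off-diagonal and to $T_2$, truncate the resulting singular series at $w$, average over residues modulo $P$ to produce $\E_{f_0\bmod P}\fS_{f_0,\Vec{\ell}}(w)^2$ in each term, and observe the $1-2+1$ cancellation. The one organizational difference is that for the off-diagonal part of $T_1$ the paper applies Theorem~\ref{thm:GT-input} with $M=1$ (no conditioning) and only afterwards expands the resulting global singular series as an average over $f_0\bmod p$ at each $p\leq w$, whereas you condition on $f_0\bmod P$ from the outset; after re-averaging over $f_0$ these computations coincide, and you have correctly identified the tail estimate over $p>w$ as the point requiring care.
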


\begin{proof}
Unsurprisingly, we first expand the square as  \begin{multline*}
    \E_{f \in \mathcal{P}(d,H)} \left|\frac{1}{X}\sum_{n\leq X} \prod_{i=1}^k \Lambda(f(n+\ell_i)) - \fS_{f,\Vec{\ell}}\,(w)\right|^2  
    = \E_{f \in \mathcal{P}(d,H)} \frac{1}{X^2} \sum_{n_1,n_2\leq X} \prod_{i=1}^k \Lambda(f(n_1+\ell_i))\Lambda(f(n_2+\ell_i)) \\
    -\E_{f \in \mathcal{P}(d,H)} \frac{2}{X} \fS_{f,\Vec{\ell}}\,(w)  \sum_{n\leq X} \prod_{i=1}^k \Lambda(f(n+\ell_i)) + \E_{f \in \mathcal{P}(d,H)} \fS_{f,\Vec{\ell}}\,(w)^2.
\end{multline*}
We will see that the second term is approximately twice the size of the last term. Again, set $P:=P(w)$ and recall that $\fS_{f,\Vec{\ell}}\,(w)$ depends only on $f\bmod P$; with this in mind, we will split the expectation over $f$ into residue classes modulo $P$.  Recall that $P\ll \exp(\log X/\log\log X)=X^{o(1)}$. Hence an application of Theorem \ref{thm:GT-input} gives
\begin{multline*}
    \frac{2}{X}\E_{f \in \mathcal{P}(d,H)}  \fS_{f,\Vec{\ell}}\,(w) \sum_{n\leq X} \prod_{i=1}^k \Lambda(f(n+\ell_i)) \\ = \frac{2}{X}\E_{f_0\bmod P} \fS_{f_0,\Vec{\ell}}\,(w) \sum_{n\leq X} \E_{\substack{f\in \mathcal{P}(d,H) \\ f\equiv f_0 \bmod P}} \prod_{i=1}^\ell \Lambda(f(n+\ell_i)) + O_d(P/H)\\
    = \frac{2}{X} \E_{f_0\bmod P} \fS_{f_0,\Vec{\ell}}\,(w) \sum_{n\leq X} \fS_{n+\Vec{\ell},f_0\bmod P} + O_{d,\delta,\alpha,k,A}(\log(H)^{-A}).
\end{multline*}
Since $H = \exp(X^{\delta})$, we can bound the error term by $O_{d,\delta,\alpha,k}(X^{-1})$. 

The next task is evaluating the sum of singular series. Recall that we may truncate $\fS_{n+\Vec{\ell},f_0\bmod P}$ since the singular series converges absolutely. In particular, we have
\begin{equation*}
    \fS_{n+\Vec{\ell}, f_0 \bmod P} = (1+O_k(w^{-1})) \prod_{p\leq w} \prod_{i=1}^k \frac{\mathbf{1}_{f_0(n+\ell_i)\in \F_p^\times}}{1-1/p},
\end{equation*}
where (for fixed $f_0$) the product depends only on $n\bmod P$. Hence
\begin{multline*}
    \frac{2}{X} \E_{f_0\bmod P} \fS_{f_0,\Vec{\ell}}\,(w) \sum_{n\leq X} \fS_{n+\Vec{\ell},f_0\bmod P} \\
    = \frac{2(1+O_k(w^{-1}))}{P^{d+1}} \sum_{f_0,n_0\bmod P} \fS_{f_0,\Vec{\ell}}\,(w) \prod_{p\leq w}  \prod_{i=1}^k \frac{\mathbf{1}_{f_0(n+\ell_i)\in \F_p^\times}}{1-1/p} \cdot \frac{1}{X} \sum_{\substack{n\leq X \\ n\equiv n_0 \bmod P}}1 \\ 
    = \frac{2(1+O_k(w^{-1}))}{P^{d+1}} \sum_{f_0\bmod P} \fS_{f_0,\Vec{\ell}}\,(w) \prod_{p\leq w} \left(\frac{p^{-1}\#\{n\in \F_p: f_0(n+\ell_i)\in \F_p^\times ~\forall i\}}{(1-1/p)^k}\right) + O_k(X^{-9/10})\\
    = 2(1+O_k(w^{-1})) \E_{f_0\bmod P}\fS_{f_0,\Vec{\ell}}\,(w)^2 + O_k(X^{-9/10}).
\end{multline*}
In obtaining the error term we used the fact that $P \ll_\varepsilon
X^\varepsilon$ for all $\varepsilon>0$ and the pointwise upper bound on $\fS_{f_0,\Vec{\ell}}\,(w)$ from Lemma \ref{lem: upper bound on truncated singular series}.  At the same time, 
\begin{equation*}
    \E_{f \in \mathcal{P}(d,H)}\fS_{f,\Vec{\ell}}\,(w)^2 =(1+O_d(P/H)) \E_{f_0\bmod P} \fS_{f_0,\Vec{\ell}}\,(w)^2 \E_{\substack{f\in \mathcal{P}(d,H)\\ f\equiv f_0\bmod P}} 1 
    = \E_{f_0\bmod P} \fS_{f_0,\Vec{\ell}}\,(w)^2 + O(H^{-1/2}).
\end{equation*}
Thus, inserting the pointwise bound $\fS_{f_0,\Vec{\ell}}\,(w)\ll (\log w)^k =w^{o_k(1)}$ from Lemma \ref{lem: upper bound on truncated singular series}, we find that the quantity in the statement of Theorem \ref{thm: refined GOAL BH prime tuple hybrid} is equal to
\begin{equation*}
    \frac{1}{X^2}\E_{f \in \mathcal{P}(d,H)} \sum_{n_1,n_2\leq X} \prod_{i=1}^k \Lambda(f(n_1+\ell_i)) \Lambda(f(n_2+\ell_i)) - \E_{f_0\bmod P}\fS_{f_0,\Vec{\ell}}\,(w)^2 + O(w^{-1+o_k(1)});
\end{equation*}
it remains to bound this quantity

We separate the diagonal and off-diagonal contributions in the first sum.  The diagonal terms are the pairs $(n_1,n_2)$ satisfying
\begin{equation*}
    \{n_1+\ell_i: 1\leq i\leq k\} \cap \{n_2+\ell_i: 1\leq i\leq k\} \neq \emptyset.
\end{equation*}
We can bound the diagonal contribution by
\begin{equation*}
    \frac{1}{X^2}\sum_{j=1}^k \log(HX^d)^j \sum_{\substack{n_1,n_2\leq X \\ |(n_1+\Vec{\ell})\cap (n_2+\Vec{\ell})| = j}} \E_{f \in \mathcal{P}(d,H)} \prod_{i=1}^k \Lambda(f(n_1+\ell_i)) \prod_{n_2+\ell_i\not\in n_1+\Vec{\ell}} \Lambda(f(n_2+\ell_i)). 
\end{equation*}
Using either Theorem \ref{thm:GT-input} or an upper-bound sieve, we can bound the latter by 
\begin{equation}\label{eq: hybrid bound on diagonal}
    \ll_{k,\varepsilon} \frac{1}{X^2} \sum_{j=1}^k \log(HX^d)^j \sum_{\substack{n_1,n_2\leq X \\ |(n_1+\Vec{\ell})\cap (n_2+\Vec{\ell})| = j}}  X^{\varepsilon} \ll_{\delta,k} X^{\delta k - 1 + \varepsilon}.
\end{equation}
Since $\delta<1/k$, the diagonal contribution is $\ll_{k,\varepsilon'} X^{\varepsilon'}$.

The off-diagonal terms are those with $(n_1+\Vec{\ell})\cap (n_2+\Vec{\ell}) = \emptyset$. In this case, a direct application of Theorem \ref{thm:GT-input} gives
\begin{multline*}
    \frac{1}{X^2} \sum_{\substack{n_1,n_2\leq X \\(n_1+\Vec{\ell})\cap (n_2+\Vec{\ell}) = \emptyset }} \E_{f \in \mathcal{P}(d,H)}\prod_{i=1}^k \Lambda(f(n_1+\ell_i)) \Lambda(f(n_2+\ell_i)) \\
    = \frac{1}{X^2} \sum_{\substack{n_1,n_2\leq X \\(n_1+\Vec{\ell})\cap (n_2+\Vec{\ell}) = \emptyset }} \fS_{n_1+\Vec{\ell},n_2+\Vec{\ell}}\, + O_{d,k,A}((\log H)^{-A}),
\end{multline*}
where the singular series is given by
\begin{equation*}
    \fS_{n_1+\Vec{\ell},n_2+\Vec{\ell_2}}:= \prod_{p} \left(\frac{p^{-d-1}\#\{f_0\in \F_p[x]: \deg(f_0) \leq d; f_0(n_1+\ell_i),f_0(n_2+\ell_i)\in \F_p^\times ~\forall i\}\}}{(1-1/p)^{2k}}\right).
\end{equation*}
As $d\geq 1$, this singular series converges absolutely, and truncating it at $w$ yields the estimate
\begin{equation*}
    \fS_{n_1+\Vec{\ell},n_2+\Vec{\ell}} = (1+O(w^{-1})) \prod_{p\leq w} \frac{1}{p^{d+1}}\sum_{f_0\bmod p} ~\prod_{i=1}^k \frac{\mathbf{1}_{f_0(n_1+\ell_i)\in \F_p^\times}}{(1-1/p)}\cdot\frac{\mathbf{1}_{f_0(n_2+\ell_i)\in \F_p^\times}}{(1-1/p)}.
\end{equation*}
This product depends only on $n_1,n_2\bmod P$, so the main term of the off-diagonal contribution can be written as
\begin{multline*}
     (1+O(w^{-1})) \frac{1}{P^{d+1}} \sum_{f_0\bmod P} \sum_{n_1',n_2'\bmod P} \prod_{p\leq w} \prod_{i=1}^k \frac{\mathbf{1}_{f_0(n_1'+\ell_i)\in \F_p^\times}}{(1-1/p)}\frac{\mathbf{1}_{f_0(n_2'+\ell_i)\in \F_p^\times}}{(1-1/p)} \times \frac{1}{X^2}\sum_{\substack{n_1,n_2\leq X \\ n_i\equiv n_i'\bmod P \\ (n_1+\Vec{\ell})\cap (n_2+\Vec{\ell}) = \emptyset}}1.
\end{multline*}
Since $P\ll_\varepsilon X^\varepsilon$ for all $\varepsilon>0$, this inner sum is 
\begin{equation*}
    \sum_{\substack{n_1,n_2\leq X \\ n_i\equiv n_i'\bmod P \\ (n_1+\Vec{\ell})\cap (n_2+\Vec{\ell}) = \emptyset}}1 = \left(\frac{X}{P}\right)^2 + O_k(X),
\end{equation*}
and the off-diagonal main term is
\begin{multline*}
    (1+O(w^{-1})) \frac{1}{P^{d+1}}\sum_{f_0\bmod P} \prod_{p\leq w} \frac{1}{p^2}\sum_{n_1',n_2'\bmod p} \prod_{i=1}^k \frac{\mathbf{1}_{f_0(n_1'+\ell_i)\in \F_p^\times}}{(1-1/p)}\frac{\mathbf{1}_{f_0(n_2'+\ell_i)\in \F_p^\times}}{(1-1/p)}  + O_k(X^{-1/2}) \\ 
    = (1+O(w^{-1})) \frac{1}{P^{d+1}}\sum_{f_0\bmod P} \prod_{p\leq w} \left(\frac{p^{-1}\#\{x\in\F_p: f_0(x+\ell_i)\in \F_p^\times~ \forall i\}}{(1-1/p)^k}\right)^2 + O_k(X^{-1/2}) \\
    = (1+O(w^{-1})) \frac{1}{P^{d+1}}\sum_{f_0\bmod P} \fS_{f_0,\Vec{\ell}}\,(w)^2 + O_k(X^{-1/2}). 
\end{multline*}

Let us return to the main quantity in the statement of Theorem \ref{thm: refined GOAL BH prime tuple hybrid}. Combining the above calculation with \eqref{eq: hybrid bound on diagonal} and recalling that $\fS_{f_0,\Vec{\ell}}\,(w)\ll \log w$, we see that this main quantity is bounded by 
\begin{multline*}
    \left|\frac{1}{X^2}\E_{f \in \mathcal{P}(d,H)} \sum_{n_1,n_2\leq X} \prod_{i=1}^k \Lambda(f(n_1+\ell_i)) \Lambda(f(n_2+\ell_i)) - \E_{f_0\bmod P}\fS_{f_0,\Vec{\ell}}\,(w)^2\right| + O_k(w^{-1+o_k(1)}) \\ \ll_\varepsilon w^{-1+o_k(1)} + O(w^{-1})\left|\E_{f_0\bmod P} \fS_{f_0,\Vec{\ell}}\,(w)^2 \right| + O_{k,d,\varepsilon,A}( X^{\delta k - 1 +\varepsilon} + X^{-1/2}+ \log(H)^{-A})  \\
    \ll_{k,d,\varepsilon} X^{\delta k - 1 + \varepsilon} + w^{-1+o_k(1)}.
\end{multline*}
We conclude on noting that the first error term is negligible since $\delta< 1/k$.
\end{proof}

We remark that this argument allows us to bound not just the second moment but also all higher moments.  We presented only the second-moment argument here because it will suffice for our applications in the following section.

\section{Distributions for prime tails}\label{sec:tail}
We model our proof of our distributional results Theorems \ref{thm: Poisson distribution} and \ref{thm:primes-in-intervals-gaussian} on Gallagher's proof of Theorem \ref{thm: Gallagher}.  The main input is Theorem \ref{thm: refined GOAL BH prime tuple hybrid}, our averaged prime tuples result. 

\subsection{From prime tuples to moments}

We can use our prime tuples result Theorem~\ref{thm: BH prime tuple hybrid} to deduce information on the typical moments of $\rho_{f,X,L}$ for various ranges of $L$.  To this end, set
$$M_k(\rho_{f,X,L}):=\E_{Y \sim \rho_{f,X,L}}[Y^k]=X^{-1}\sum_{n \leq X}(\pi_f(n+L)-\pi_f(n))^k,$$
where $\pi_f(n)$ denotes the number of $1 \leq x \leq n$ such that $f(x)$ is prime.  Let $S(k,r)$ denote the Stirling number of the second kind (the number of partitions of $[k]$ into exactly $r$ parts).

\begin{thm}\label{thm:M}
Let $d \geq 1$, let $0<\alpha<2/3$, and let $\delta(X)=\delta_{d,\alpha}(X)$ be the function from Remark~\ref{rem: diagonalization}.  Set $H=H(X):=\exp(X^{\delta(X)})$.  Let $L=L(X) \in \mathbb{N}$ satisfy $0< L\leq X^\alpha$ and $L(X)\rightarrow\infty$ as $X\rightarrow\infty$. Let $w=w(X) \in \mathbb{N}$ satisfy $w(X)\leq \min(\log L(X)/\log\log L(X), \log X/\log \log X)$.  Then for every fixed $k \in \mathbb{N}$, we have
$$\E_{f \in \mathcal{P}(d,H)} \left|M_{k}(\rho_{f,X,L}) -  \sum_{r=1}^k S(k,r)\left(\frac{L\fS_f(w)}{\log(HX^d)}\right)^r\right|^2 \ll_{d,\alpha,k} \frac{1}{w^{1-o_k(1)}} \sum_{r=1}^k \left(\frac{L}{\log(HX^d)}\right)^{2r}.$$
\end{thm}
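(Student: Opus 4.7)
The plan is to realize the Poisson-style moment formula by expanding $(\pi_f(n+L)-\pi_f(n))^k$ combinatorially, invoking Theorem \ref{thm: refined GOAL BH prime tuple hybrid} on each resulting correlation sum, and matching the resulting sum of truncated singular series with $L^r \fS_f(w)^r$. The starting point is the Stirling identity
\begin{equation*}
(\pi_f(n+L)-\pi_f(n))^k = \sum_{r=1}^k S(k,r) \sum_{\substack{\ell_1,\ldots,\ell_r \in [0,L)\\ \text{distinct}}} \prod_{i=1}^r \mathbf{1}[f(n+\ell_i) \in \mathscr{P}],
\end{equation*}
obtained by partitioning ordered $k$-tuples of positions according to their equality patterns. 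Summing in $n$ and replacing each indicator by $\Lambda(f(n+\ell_i))/\log(HX^d)$ expresses $M_k(\rho_{f,X,L})$ as $\sum_r S(k,r)\log(HX^d)^{-r} A_r(f)$ plus a small error, where $A_r(f) := \sum_{\vec\ell\ \text{distinct}} X^{-1}\sum_n \prod_i \Lambda(f(n+\ell_i))$. The replacement error is controlled by (i) the log correction $\log|f(m)|/\log(HX^d) = 1+o(1)$ on the bulk of inputs $m \leq X+L$ for which $|f(m)|$ is not tiny, and (ii) the sparsity of higher prime powers up to $HX^d$; both corrections comfortably fall inside the target error.

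For each distinct tuple $\vec\ell \in [0,L)^r$, I would then apply Theorem \ref{thm: refined GOAL BH prime tuple hybrid}---legitimately, since the diagonalized choice of $\delta(X) = \delta_{d,\alpha}(X)$ ensures $r < 1/\delta$ for every fixed $r \leq k$, and since $L \leq X^\alpha$---to get
\begin{equation*}
\Bigl\|X^{-1}\sum_n \prod_{i=1}^r \Lambda(f(n+\ell_i)) - \fS_{f,\vec\ell}(w)\Bigr\|_{L^2(f)} \ll_{d,\alpha,k} w^{-1/2+o_k(1)},
\end{equation*}
uniformly in $\vec\ell$. Summing over the at most $L^r$ distinct tuples via Minkowski's inequality in $L^2(f)$ gives
\begin{equation*}
\|A_r(f) - B_r(f)\|_{L^2(f)} \ll L^r w^{-1/2+o_k(1)}, \qquad B_r(f) := \sum_{\vec\ell\ \text{distinct}} \fS_{f,\vec\ell}(w).
\end{equation*}

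Finally, I would replace $B_r(f)$ by $L^r \fS_f(w)^r$ pointwise in $f$. Since $\fS_{f,\vec\ell}(w)$ depends on $\vec\ell$ only modulo $P := \prod_{p \leq w} p$, and since $P \ll L^{o(1)}$ under the hypothesis $w \leq (\log L)/\log\log L$ (using Chebyshev's bound $\theta(w) \ll w$), splitting $\vec\ell \in [0,L)^r$ into residue classes modulo $P$ produces the leading term $(L/P)^r \sum_{\vec\ell_0 \bmod P} \fS_{f,\vec\ell_0}(w)$ plus a relative error $O(P/L) = o(1)$. The inner average factors over primes $p \leq w$, and for each such $p$ the independence of the events $f(x+\ell_{0,i}) \in \F_p^\times$ under a uniform $\vec\ell_0 \bmod p$---exactly as in the singular-series computation of Section \ref{sec:BH-moments}---produces the $r$-th power of the local factor of $\fS_f(w)$, giving $P^{-r}\sum_{\vec\ell_0 \bmod P} \fS_{f,\vec\ell_0}(w) = \fS_f(w)^r$. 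The distinctness restriction loses only $O(r^2 L^{r-1}(\log w)^r)$, again of lower order (via Lemma \ref{lem: upper bound on truncated singular series}). Plugging this pointwise estimate back, applying Minkowski to combine with the bound on $A_r-B_r$, and then Cauchy--Schwarz on the outer sum over $r$ (using $S(k,r) = O_k(1)$) produces exactly the claimed bound. The main technical obstacle will be the bookkeeping of the several lower-order corrections---prime powers, log ratio, distinctness, and incomplete residues---each of which must be verified to lie strictly below the $w^{-1+o_k(1)}$ level in the specified parameter regime.
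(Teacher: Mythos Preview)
Your proposal is correct and follows essentially the same approach as the paper: the same Stirling expansion, the same replacement of prime indicators by $\Lambda/\log(HX^d)$ (the paper packages this as Lemma~\ref{lem:von-mangoldt-to-primes}, carrying out the partial summation and the split on $|a_d|$ and on small $n$ that you allude to), the same application of Theorem~\ref{thm: refined GOAL BH prime tuple hybrid} to each tuple $\vec\ell$, and the same mod-$P$ averaging to convert $\sum_{\vec\ell}\fS_{f,\vec\ell}(w)$ into $L^r\fS_f(w)^r$ (packaged as Lemma~\ref{lem:singular-series-for-M-moments}). The only cosmetic difference is the order of operations: the paper first passes from $\Lambda$ to prime indicators inside a lemma and then assembles the $r$-sum, whereas you reverse these; also, where you use Minkowski to sum the $L^2(f)$-errors over $\vec\ell$, the paper uses Cauchy--Schwarz on the outer sum, but these give the same bound.
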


We remark that our argument would work with any $\alpha<1$, and we have presented the proof with $\alpha<2/3$ for notational simplicity; this makes no difference in our applications, where $L(X)$ will in fact grow more slowly than any positive power of $X$.

Theorem~\ref{thm:M} shows that for most $f$'s, the quantity $M_{k}(\rho_{f,X,L})$ is very close to the $k$-th moment of a Poisson random variable of expectation $\mathcal{L}:=\frac{L\fS_f(w)}{\log(HX^d)}$.  This underpins Theorem~\ref{thm: Poisson distribution} in the regime of constant $\mathcal{L}$.  In the regime where $\mathcal{L}$ tends to infinity, we deduce Theorem~\ref{thm:primes-in-intervals-gaussian} from the standard fact that a Poisson distribution with growing expectation limits to a Gaussian after suitable centering and rescaling.

We isolate two steps of the proof in advance.  The first lemma records a singular series identity; this is an important component of the argument.  The second lemma lets us replace $\Lambda$ with the indicator function of the primes in the conclusion of Theorem~\ref{thm: BH prime tuple hybrid}; this maneuver is standard.  
\begin{lemma}\label{lem:singular-series-for-M-moments}
Let $d \geq 1$ and let $\alpha>0$. Let $L=L(X) \in \mathbb{N}$ satisfy $0< L\leq X^\alpha$ and $L(X)\to\infty$ as $X\to\infty$. Let $w=w(X) \in \mathbb{N}$ satisfy $w(X)\leq \min(\log L(X)/\log\log L(X), \log X/\log \log X)$. Then for every $f\in \Z[x]$, irreducible and of degree $d$, and every $r \in \mathbb{N}$, we have
$$ (L\fS_f(w))^r = \sum_{\substack{1\leq \ell_1,...,\ell_r\leq L \\ \text{distinct}}} \fS_{f,\vec \ell}\,(w) + O_r(L^{r-1+o_{r,\alpha}(1)}).$$
\end{lemma}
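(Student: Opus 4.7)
The plan is to prove the identity by a direct computation, exploiting the periodicity of $\fS_{f,\vec\ell}\,(w)$ in $\vec\ell$ modulo $P:=\prod_{p\leq w}p$, and then applying the Chinese Remainder Theorem to reduce the main-term computation to a per-prime calculation.

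First I would observe that $\fS_{f,\vec\ell}\,(w)$ depends on $\vec\ell$ only through $\vec\ell\bmod P$, and that the hypothesis $w\leq \log L/\log\log L$ together with the Prime Number Theorem give $P = \exp((1+o(1))w) \leq L^{o(1)}$. Partitioning the distinct tuples $\vec\ell\in [1,L]^r$ by residue class modulo $P$ then gives
\begin{equation*}
\sum_{\substack{1\leq \ell_1,\ldots,\ell_r\leq L\\ \text{distinct}}}\fS_{f,\vec\ell}\,(w) \;=\; \sum_{\vec m \bmod P}\fS_{f,\vec m}\,(w)\cdot \#\{\vec\ell\in[1,L]^r \text{ distinct}: \vec\ell\equiv \vec m\bmod P\}.
\end{equation*}
A routine count shows that the last cardinality equals $(L/P)^r + O_r((L/P)^{r-1})$, where the secondary term absorbs both the $\lfloor L/P\rfloor$ versus $L/P$ rounding issues and the removal of collision tuples. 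Using the pointwise bound $\fS_{f,\vec m}\,(w)\ll (\log w)^r = w^{o_r(1)}$ from Lemma~\ref{lem: upper bound on truncated singular series}, together with $P\leq L^{o(1)}$, the contribution of the $O_r((L/P)^{r-1})$ terms is $P^r\cdot O_r((L/P)^{r-1}) \cdot w^{o_r(1)} = O_r(L^{r-1+o_r(1)})$, matching the claimed error.

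It remains to evaluate the main term, i.e., to show that
\begin{equation*}
\sum_{\vec m\bmod P}\fS_{f,\vec m}\,(w) \;=\; P^r\, \fS_f(w)^r,
\end{equation*}
so that multiplying by $(L/P)^r$ produces $(L\fS_f(w))^r$. By the Chinese Remainder Theorem, the sum on the left factors as $\prod_{p\leq w}\Sigma_p$, where
\begin{equation*}
\Sigma_p \;:=\; \sum_{\vec m \in \F_p^r} \frac{p^{-1}\#\{x\in \F_p: f(x+m_i)\in \F_p^\times~\forall i\}}{(1-1/p)^r}.
\end{equation*}
Swapping the order of summation (fix $x$, sum over $\vec m$), for each $x\in\F_p$ the number of $\vec m\in\F_p^r$ with $f(x+m_i)\in\F_p^\times$ for all $i$ is exactly $\#\{y\in\F_p: f(y)\in\F_p^\times\}^r$. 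Hence $\Sigma_p = \#\{y\in\F_p:f(y)\in\F_p^\times\}^r/(1-1/p)^r = p^r\cdot\bigl(p^{-1}\#\{y:f(y)\in\F_p^\times\}/(1-1/p)\bigr)^r$, and taking the product over $p\leq w$ yields $P^r\fS_f(w)^r$, as required.

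The entire argument is routine bookkeeping; the only substantive step is the per-prime swap-of-summation identity, which is a clean combinatorial observation. Note that the irreducibility of $f$ is not used anywhere in the identity itself—it is carried in the hypotheses only because the lemma is invoked in contexts where one restricts to irreducible $f$ for other reasons.
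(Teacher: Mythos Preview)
Your proposal is correct and follows essentially the same route as the paper: both arguments split the sum by residue class modulo $P=\prod_{p\leq w}p$, count the distinct tuples in each class as $(L/P)^r$ plus a lower-order correction, and then use the Chinese Remainder Theorem together with the swap-of-summation identity to show that $\sum_{\vec m\bmod P}\fS_{f,\vec m}(w)=P^r\fS_f(w)^r$. Your observation that irreducibility of $f$ is never actually used is also correct.
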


\begin{proof}
We start with the sum on the right-hand side. Again, set $P:= P(w)$.  We have
\begin{multline*}
   \sum_{\substack{1\leq \ell_1,...,\ell_r\leq L \\\text{distinct}}}\fS_{f,\Vec{\ell}}\,(w) = \sum_{\substack{1\leq \ell_1,...,\ell_r\leq L \\\text{distinct}}} \prod_{p\leq w} \frac{p^{-1} \#\{x\in \F_p: f(x+\ell_i)\in \F_p^\times ~\forall i\}}{(1-1/p)^r}  \\
   = \sum_{\Vec{l}\in (\Z/P\Z)^r} \prod_{p\leq w} \frac{p^{-1} \#\{x\in \F_p: f(x+l_i)\in \F_p^\times~ \forall i\}}{(1-1/p)^r} \\ \times \#\{1\leq \ell_1,...,\ell_r\leq L: \ell_i \equiv l_i \bmod P,~ \text{distinct}\}.
\end{multline*}
For each $0 
\leq j<P$, let $e_j$ denote the number of indices $i$ such that $l_i=j$. Then we know that 
\begin{multline*}
    \#\{1\leq \ell_1,...,\ell_r\leq L: \ell_i \equiv l_i \bmod P,~ \text{distinct}\} = \prod_{j} \frac{\lfloor (L-j)/P \rfloor ! }{\lfloor (L-j)/P - e_j\rfloor!} =\left(\frac{L}{P}\right)^r + O_r(L^{r-1}).
\end{multline*}
Hence, we get that 
\begin{multline}\label{eq: summing singular series over tuples}
    \sum_{\substack{1\leq \ell_1,...,\ell_r\leq L \\\text{distinct}}}\fS_{f,\Vec{\ell}}\,(w) = \left(\frac{L}{P}\right)^r\sum_{\Vec{l}\in (\Z/P\Z)^r} \prod_{p\leq w} \frac{p^{-1} \#\{x\in \F_p: f(x+l_i)\in \F_p^\times ~\forall i\}}{(1-1/p)^r} \\ + O_r\left(L^{r-1} \sum_{\Vec{l}\in (\Z/P\Z)^r} \fS_{f,\Vec{l}}\,(w)\right).  
\end{multline}
Lemma \ref{lem: upper bound on truncated singular series} tells us that $\fS_{f,\Vec{l}}\,(w) \ll (\log w)^r$, so the error term is $O_r(L^{r-1}P^r (\log w)^r)$. From $w \leq  (\log L)/(\log\log L)$ we get $P=L^{o_\alpha(1)}$, and the error can be bounded by $O(L^{r-1+o_{r,\alpha}(1)})$. 

Next, we manipulate the singular series
\begin{equation*}
    \sum_{\Vec{l}\in (\Z/P\Z)^r} \prod_{p\leq w} \frac{p^{-1} \#\{x\in \F_p: f(x+l_i)\in \F_p^\times ~\forall i\}}{(1-1/p)^r} = \prod_{p\leq w} \sum_{\Vec{l}\in \F_p^r} \frac{p^{-1}\#\{x\in \F_p: f(x+l_i)\in \F_p^\times ~\forall i\}}{(1-1/p)^r}.
\end{equation*}
Switching the order of summation, we see that 
\begin{align*}
    \sum_{\Vec{l}\in \F_p^r} \#\{x\in \F_p: f(x+l_i)\in \F_p^\times~ \forall i\} &= \sum_{x\in \F_p}~ \prod_{i=1}^r \#\{l_i\in \F_p: f(x+\ell_i) \in \F_p^\times\} \\
    &= p\cdot \#\{y\in \F_p: f(y)\in \F_p^\times\}^r.
\end{align*}
Thus we can write
\begin{equation*}
    \prod_{p\leq w} \sum_{\Vec{l}\in \F_p^r} \frac{p^{-1}\#\{x\in \F_p: f(x+l_i)\in \F_p^\times ~\forall i\}}{(1-1/p)^r} = \prod_{p\leq w} \left(\frac{\#\{y\in \F_p: f(y)\in \F_p^\times\}}{1-1/p}\right)^r.
\end{equation*}
The main term in \eqref{eq: summing singular series over tuples} now becomes 
\begin{equation*}
    L^r \prod_{p\leq w}\left(\frac{p^{-1}\#\{y\in \F_p: f(y) \in \F_p^\times\}}{1-1/p}\right)^r = L^r \fS_{f}(w)^r,
\end{equation*}
and we conclude that 
\begin{equation*}
    \sum_{\substack{1\leq \ell_1,...,\ell_r\leq L \\\text{distinct}}}\fS_{f,\Vec{\ell}}\,(w) = (L\fS_{f}(w))^r + O_r(L^{r-1+o_{r,\alpha}(1)}). \qedhere
\end{equation*}
\end{proof}

\begin{lemma}\label{lem:von-mangoldt-to-primes}
Let $d\geq 1$ and let $0<\delta<1$ be a real. Set $H=H(X) := \exp(X^\delta)$. Let $1\leq k< 1/3\delta$ be an integer and let $0<\alpha<2/3$ be a real. Then for any distinct integers $\ell_1, \ldots, \ell_k$ of size at most $X^\alpha$, we have
$$\E_{f \in \mathcal{P}(d,H)} \left|\#\{n\leq X: f(n+\ell_i)\in \mathscr{P} ~\forall i\} - X\frac{ \fS_{f,\vec \ell}\,(w)}{\log(HX^d)^k }\right|^2 \ll_{d,\delta, \alpha,k} \frac{X^2 w^{-1+o_k(1)}}{\log(HX^d)^{2k}}.$$
\end{lemma}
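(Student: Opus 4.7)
The plan is to reduce this lemma to the averaged prime-tuples result, Theorem \ref{thm: refined GOAL BH prime tuple hybrid}, by comparing the prime indicator to the normalized von Mangoldt function. Write $Y := HX^d$, $\pi_f := \#\{n \leq X : f(n+\ell_i) \in \mathscr{P}~\forall i\}$, and $C_f := \log(Y)^{-k} \sum_{n \leq X} \prod_{i=1}^k \Lambda(f(n+\ell_i))$. Dividing the conclusion of Theorem \ref{thm: refined GOAL BH prime tuple hybrid} through by $\log(Y)^{2k}$ yields
\[
\E_f \left| C_f - \frac{X\,\fS_{f,\vec\ell}(w)}{\log(Y)^k} \right|^2 \ll_{d,\delta,\alpha,k} \frac{X^2 w^{-1+o_k(1)}}{\log(Y)^{2k}},
\]
with $w \leq (\log X)/(\log\log X)$ as in that theorem. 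By the triangle inequality in $L^2$, it then suffices to establish the matching bound $\E_f |\pi_f - C_f|^2 \ll X^2 w^{-1+o_k(1)}/\log(Y)^{2k}$.

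To compare $\pi_f$ with $C_f$, set $\eta := w^{-1/2}$ and analyze, pointwise in $n$, the integrand $\prod_i \mathbf{1}_{\mathscr{P}}(f(n+\ell_i)) - \log(Y)^{-k} \prod_i \Lambda(f(n+\ell_i))$ by cases. If all $|f(n+\ell_i)|$ lie in $[Y^{1-\eta}, Y]$ and all $f(n+\ell_i)$ are primes, the integrand equals $1 - \prod_i \log|f(n+\ell_i)|/\log Y \in [0, 1-(1-\eta)^k] \subseteq [0, k\eta]$. If some $|f(n+\ell_i)| < Y^{1-\eta}$ or some $f(n+\ell_i)$ is a proper prime power, the integrand has absolute value at most $1$. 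Otherwise both terms vanish. Setting $M_f := \#\{n \leq X : \exists i,\,|f(n+\ell_i)| < Y^{1-\eta}\}$ and $N_f := \#\{n \leq X : \exists i,\,f(n+\ell_i) \text{ is a proper prime power}\}$, this yields the pointwise bound $|\pi_f - C_f| \leq k\eta\,\pi_f + M_f + N_f$, whence
\[
\E|\pi_f - C_f|^2 \leq 3k^2\eta^2\,\E\pi_f^2 + 3\E M_f^2 + 3\E N_f^2.
\]

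The three expectations are bounded separately. For $\E\pi_f^2$, start from the upper-bound sieve inequality $\pi_f \leq \log(Y^{1-\eta})^{-k} \sum_n \prod_i \Lambda(f(n+\ell_i)) + M_f$; expanding the square and applying Theorem \ref{thm:GT-input} to the $2k$-tuple $\{n_j+\ell_i\}_{j=1,2;\,i=1,\dots,k}$ (pairwise distinct when $n_1 \neq n_2$), together with handling the diagonal $n_1=n_2$ via $\Lambda^2(m) \leq \Lambda(m)\log(2Y)$, produces $\E\pi_f^2 \ll_{d,k} X^2 (\log w)^{O_k(1)}/\log(Y)^{2k}$; the hypothesis $k < 1/(3\delta)$ gives ample slack for the off-diagonal and diagonal bookkeeping. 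For $\E M_f^2$, the uniform distribution of the coefficient $a_0$ gives $\Pr_f(|f(n+\ell_i)| < Y^{1-\eta}) \leq Y^{1-\eta}/H = \exp(-\eta X^\delta)\,X^{d(1-\eta)}$, and since $\eta X^\delta = X^\delta/\sqrt{w} \to \infty$ (using $w \leq (\log X)/\log\log X$) this probability is superpolynomially small in $X$, so $\E M_f^2 \leq X\,\E M_f$ is negligible. For $\E N_f^2$, the set of proper prime powers up to $Y$ has size $O(Y^{1/2})$, and each fixed value is attained by $f(n+\ell_i)$ with probability $\leq 1/H$, giving $\E N_f \ll X \cdot Y^{1/2}/H = X^{1+d/2}/H^{1/2}$, exponentially small; hence $\E N_f^2 \leq X\,\E N_f$ is also negligible.

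Assembling the pieces, $3k^2\eta^2\,\E\pi_f^2 \ll w^{-1} \cdot X^2 (\log w)^{O_k(1)}/\log(Y)^{2k} = X^2 w^{-1+o_k(1)}/\log(Y)^{2k}$, while the $M_f$ and $N_f$ contributions are absorbed. Combined with the initial reduction step via $L^2$ triangle inequality, this proves the lemma. The main obstacle is balancing the choice of $\eta$: it must be small enough that $k^2\eta^2\,\E\pi_f^2$ hits the target $X^2 w^{-1+o_k(1)}/\log(Y)^{2k}$ (requiring $\eta \lesssim w^{-1/2}$), yet large enough that $\eta X^\delta \to \infty$ so that $\exp(-\eta X^\delta)$ beats the polynomial-in-$X$ factors in the $\E M_f^2$ bound (requiring $\eta \gg (\log X)/X^\delta$). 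The gap between $w^{-1/2} \geq (\log X)^{-1/2}$ and $(\log X)/X^\delta$ is comfortable for $X$ large, and the choice $\eta = w^{-1/2}$ works.
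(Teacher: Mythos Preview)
Your proof is correct and takes a genuinely different route from the paper's. The paper first replaces $\Lambda$ by the Chebyshev-type weight $\theta(n)=\log|n|\cdot\mathbf{1}_{n\in\mathscr{P}}$ (disposing of proper prime powers much as you do), and then passes from $\theta$ to the prime indicator by partial summation in $n$; controlling the derivative of $\prod_i \log f(t+\ell_i)^{-1}$ forces a case split on the size of the leading coefficient $a_d$ (to bound $|f(t)|$ from below) and on the range of $n$. Your threshold argument with $\eta=w^{-1/2}$ bypasses partial summation entirely: the discrepancy between $\log|f(n+\ell_i)|$ and $\log Y$ is absorbed into the term $k\eta\,\pi_f$, and everything else is pushed into the rare events $M_f,N_f$, whose probabilities you control directly via the free coefficient $a_0$. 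This is cleaner, and in fact your diagonal bookkeeping for $\E\pi_f^2$ needs only $2k\delta<1$; it is the paper's $X^{2/3}$-saving from the $|a_d|$ split that forces the stated hypothesis $k<1/(3\delta)$. Two small imprecisions worth tightening: your case analysis tacitly assumes $|f(n+\ell_i)|\leq Y$, whereas in fact $|f(n+\ell_i)|\ll_d Y$, so the integrand in your first case is really $O_{d,k}(\eta+1/\log Y)=O_{d,k}(\eta)$ rather than lying in $[0,k\eta]$; and your ``diagonal $n_1=n_2$'' should be the $O_k(X)$ pairs $(n_1,n_2)$ for which the sets $\{n_j+\ell_i\}_i$ overlap, handled by the same crude bound.
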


\begin{proof}
Theorem~\ref{thm: refined GOAL BH prime tuple hybrid} tells us that
\begin{equation*}
    \E_{f \in \mathcal{P}(d,H)}\left|\frac{1}{X}\sum_{n\leq X}\prod_{i=1}^k \Lambda(f(n+\ell_i)) - \fS_{f,\Vec{\ell}}\,(w)\right|^2 \ll_{d,\delta,\alpha,k}w^{-1+o_k(1)}.
\end{equation*}
This will give us the desired control once we replace $\Lambda$ with the function
\begin{equation*}
    \theta(n) := \begin{cases}
        \log(|n|), & n\text{ prime}; \\ 
        0, & \text{otherwise}
    \end{cases}
\end{equation*}
and apply partial summation.

First, we claim that 
\begin{equation}\label{eq: hybrid with thetas}
    \E_{f \in \mathcal{P}(d,H)}\left|\frac{1}{X}\sum_{n\leq X}\prod_{i=1}^k \theta(f(n+\ell_i)) -  \fS_{f,\Vec{\ell}}\,(w)\right|^2 \ll_{d,\delta,\alpha,k} w^{-1+o_k(1)}.
\end{equation}
Observe that 
\begin{multline*}
    \E_{f\in \calP(d,H)} \left|\frac{1}{X}\sum_{n\leq X} \prod_{i=1}^k \theta(f(n+\ell_i)) - \fS_{f,\Vec{\ell}}\,(w)\right|^2 \\ \ll \E_{f\in \calP(d,H)}\left|\frac{1}{X}\sum_{n\leq X} \prod_{i=1}^k \theta(f(n+\ell_i)) - \Lambda(f(n+\ell_i))\right|^2 + \left|\frac{1}{X}\sum_{n\leq X} \prod_{i=1}^k \Lambda(f(n+\ell_i)) - \fS_{f,\Vec{\ell}}\,(w)\right|^2.
\end{multline*}
By Theorem ~\ref{thm: refined GOAL BH prime tuple hybrid}, the second term is $O_{d,\delta,\alpha,k}(w^{-1+o(1)})$. For the first term, we crudely bound
\begin{multline*}
    \E_{f\in \calP(d,H)}\left|\frac{1}{X}\sum_{n\leq X} \prod_{i=1}^k \theta(f(n+\ell_i)) - \Lambda(f(n+\ell_i))\right|^2 \\ \ll \log(HX^d)^k \E_{f\in \calP(d,H)} \left|\frac{1}{X}\sum_{n\leq X} \prod_{i=1}^k \theta(f(n+\ell_i)) - \Lambda(f(n+\ell_i))\right|.
\end{multline*}
Telescoping the product and applying the Triangle Inequality gives 
\begin{multline*}
    \E_{f\in \calP(d,H)} \left|\frac{1}{X}\sum_{n\leq X} \left(\prod_{i=1}^k \theta(f(n+\ell_i)) - \prod_{i=1}^k \Lambda(f(n+\ell_i)\right)\right|\\ \leq \sum_{j=1}^k \E_{f\in \calP(d,H)} \frac{1}{X} \sum_{n\leq X} \left|\prod_{i=1}^{j-1} \theta(f(n+\ell_i)) \times (\theta(f(n+\ell_j))-\Lambda(f(n+\ell_j)) \times \prod_{i=j+1}^k \Lambda(f(n+\ell_i))\right|. 
\end{multline*}
The functions $\theta,\Lambda$ differ only on proper prime powers (that is, numbers of the form $p^r$ with $p$ prime and $r >1$), so the quantity inside the absolute value is nonzero only when $f(n+\ell_j)$ is a proper prime power; when this does occur it can be bounded by $\ll_{d,\delta,k} (\log H)^k$.  For each fixed $n$, the quantity $f(n+\ell_j)$ is a perfect $r$-th power for at most a $O(H^{-1+1/r})$-proportion of $f$'s; summing over $r \geq 2$, we conclude that $f(n+\ell_j)$ is a proper prime power for at most a $O(H^{-1/2})$-proportion of $f$'s. Thus the previous centered equation is at most $\ll_{d,\delta,k} (\log H)^{k}H^{-1/2}$, which is much smaller than $w^{-1+o_k(1)}$.
This completes the proof of \eqref{eq: hybrid with thetas}.

With \eqref{eq: hybrid with thetas} in hand, we return to our main quantity of interest, namely, 
\begin{equation*}
    \E_{f\in \calP(d,H)} \left|\#\{n\leq X: f(n+\ell_i)\in \scrP ~\forall i\} - X \frac{\fS_{f,\Vec{\ell}}\,(w)}{\log(HX^d)^k}\right|^2.
\end{equation*}
We can bound this by
\begin{multline*}
\ll\E_{f\in \calP(d,H)} \left|\#\{n\leq X: f(n+\ell_i)\in \mathscr{P}~\forall i\} - \frac{1}{\log(HX^d)^k} \sum_{n\leq X} \prod_{i=1}^k \theta(f(n+\ell_i))\right|^2 \\ + \E_{f\in \calP(d,H)} \left|\frac{1}{\log(HX^d)^k} \left(\sum_{n\leq X} \prod_{i=1}^k \theta(f(n+\ell_i)) - X\fS_{f,\Vec{\ell}}\,(w)\right)\right|^2,
\end{multline*}
where \eqref{eq: hybrid with thetas} tells us that the second term is $\ll_{d,\delta,\alpha,k} \frac{w^{-1+o_k(1)}X^2}{\log(HX^d)^{2k}}$.
We will handle the first term using partial summation. Let us split the expectation over $f\in \calP(d,H)$ according to whether $|a_d|\geq H/X^{2/3}$ or $|a_d|\leq H/X^{2/3}$. When $|a_d|\leq H/X^{2/3}$, we bound the sum over $n$ trivially as
\begin{multline*}
    \frac{1}{(2H+1)^{d+1}}\sum_{\substack{f \in \mathcal{P}(d,H)\\ |a_d|\leq H/X^{2/3}}}\left|\#\{n\leq X: f(n+\ell_i)\in \mathscr{P}~\forall i\} - \frac{1}{\log(HX^d)^k} \sum_{n\leq X} \prod_{i=1}^k \theta(f(n+\ell_i))\right|^2\\
     \ll \frac{1}{(2H+1)^{d+1}}\sum_{\substack{f \in \mathcal{P}(d,H)\\ |a_d|\leq H/X^{2/3}}} O(X^2) \ll X^{2-2/3}.
\end{multline*}
The assumption $k<1/3\delta$ ensures that the power of $X$ in $(\log (HX^d))^{2k} \ll_d X^{2k\delta}$ is strictly smaller than $2/3$, so our $X^{2/3}$ saving is acceptable for the conclusion of the lemma.

We now turn to the contribution of $f\in \calP(d,H)$ with $|a_d|\geq H/X^{2/3}$. Observe that if $t\geq 2X^{2/3}$ then $|f(t)| \asymp |a_d|t^d$, i.e., $|f(t)|$ can be small only when $t\leq 2 X^{2/3}$.  We thus split our sum 
\begin{multline*}
    \frac{1}{(2H+1)^{d+1}}\sum_{\substack{f \in \mathcal{P}(d,H)\\ |a_d|> H/X^{2/3}}}\left|\#\{n\leq X: f(n+\ell_i)\in \mathscr{P}~\forall i\} - \frac{1}{\log(HX^d)^k} \sum_{n\leq X} \prod_{i=1}^k \theta(f(n+\ell_i))\right|^2
\end{multline*}
according to the size of $n$. This quantity is $\ll S_1+S_2$, where $S_1$ is the small-$n$ contribution
$$\frac{1}{(2H+1)^{d+1}}\sum_{\substack{f \in \mathcal{P}(d,H)\\ |a_d|> H/X^{2/3}}}\left|\#\{n\leq 2X^{2/3}: f(n+\ell_i)\in \mathscr{P}~\forall i\} - \frac{1}{\log(HX^d)^k} \sum_{n\leq 2X^{2/3}} \prod_{i=1}^k \theta(f(n+\ell_i))\right|^2$$
and $S_2$ is the large-$n$ contribution
$$\frac{1}{(2H+1)^{d+1}}\sum_{\substack{f \in \mathcal{P}(d,H)\\ |a_d|> H/X^{2/3}}}\left|\#\{n\in [2X^{2/3},X]: f(n+\ell_i)\in \mathscr{P}~\forall i\} - \frac{1}{\log(HX^d)^k} \sum_{n\in [2X^{2/3},X]} \prod_{i=1}^k \theta(f(n+\ell_i))\right|^2.$$
We can trivially dispose of the first term as
\begin{equation*}
    S_1 \ll \frac{1}{(2H+1)^{d+1}}\sum_{\substack{f \in \mathcal{P}(d,H)\\ |a_d|> H/X^{2/3}}} X^{4/3} \ll X^{2-2/3},
\end{equation*}
which is acceptable (as described at the end of the previous paragraph).  We use partial summation to handle $S_2$.  Writing
$$\psi_{f,\Vec{\ell}}\,(t):= \sum_{n\leq X}\prod_{i=1}^k \theta(f(n+\ell_i)),$$
we find by partial summation that
\begin{multline*}
    \#\{n\in [2X^{2/3},X]: f(n+\ell_i)\in \scrP~\forall i\} = \frac{1}{\log(HX^d)^k} \sum_{n\in [2X^{2/3},X]} \prod_{i=1}^k \theta(f(n+\ell_i)) \\
    + O\left(\int_{2X^{2/3}}^X \psi_{f,\Vec{\ell}}\,(t) \cdot \frac{d}{dt}\left(\frac{1}{\prod_{i=1}^k \log(f(t+\ell_i))}\right)\,dt\right).
\end{multline*}
The lower bound on $t$ (together with the upper bound on the $\ell_i$'s) ensures that
\begin{equation*}
    \frac{d}{dt}\left(\frac{1}{\prod_{i=1}^k\log(f(t+\ell_i))}\right) \ll_k \frac{1}{t (\log |f(t)|)^{k+1}}.
\end{equation*}
Putting everything together, we have
\begin{multline*}
    S_2\ll_k \frac{1}{(2H+1)^{d+1}}\sum_{\substack{f \in \mathcal{P}(d,H)\\ |a_d|> H/X^\beta}} \left|\int_{2X^{2/3}}^X \frac{\psi_{f,\Vec{\ell}}\,(t)}{t(\log|f(t)|)^{k+1}} \,dt\right|^2\\
    \ll_k \frac{1}{\log(HX^d)^{2(k+1)}} \int_{2X^{2/3}}^X \int_{2X^{2/3}}^X \E_{f\in \calP(d,H)} \frac{\psi_{f,\Vec{\ell}}\,(t)}{t}\cdot \frac{\psi_{f,\Vec{\ell}}\,(t')}{t'}\,dt \,dt'.
\end{multline*}
Since $\delta<1/2k$, it is straightforward to check (by Cauchy--Schwarz) that the integrand is uniformly $O_k(1)$, so again this error term is acceptable (with room to spare).
\end{proof}

With these two ingredients in hand, we can prove Theorem~\ref{thm:M}.

\begin{proof}[Proof of Theorem~\ref{thm:M}]
Since $\pi_f(n+L)-\pi_f(n)=\sum_{\ell=1}^L {\bf 1}_{f(n+\ell) \in \mathscr{P}}$, we can expand
\begin{multline*}
    M_{k}(\rho_{f,X,L}) = \sum_{n\leq X} (\pi_{f}(n+L)-\pi_f(n))^k 
    = \sum_{r=1}^k S(k,r) \sum_{\substack{1\leq \ell_1,...,\ell_r\leq L\\ \text{distinct}}} \#\{n\leq X: f(n+\ell_i)\in \mathscr{P} ~ \forall i\}.
\end{multline*}
The main idea is that the sums over the $\ell_i$'s are of the form treated in Theorem~\ref{thm: BH prime tuple hybrid} (via Lemma~\ref{lem:von-mangoldt-to-primes}).  Applying the Cauchy--Schwarz Inequality, inserting Lemma~\ref{lem:singular-series-for-M-moments}, and applying the Cauchy--Schwar Inequality again, we get 
\begin{multline*}
\left|M_{k}(\rho_{f,X,L}) -  X\sum_{r=1}^k S(k,r) \left(\frac{L\fS_{f}(w)}{\log(HX^d)}\right)^r\right|^2 \\ \leq \sum_{r=1}^kS(k,r)^2\cdot\sum_{r=1}^k \left|\sum_{\substack{1\leq \ell_1,...,\ell_r\leq L\\ \text{distinct}}} \#\{n\leq X: f(n+\ell_i)\in \mathscr{P} ~\forall i\} -X \left(\frac{L\fS_f(w)}{\log(HX^d)}\right)^r\right|^2\\
    \ll_k \sum_{r=1}^k \left|\sum_{\substack{1\leq \ell_1,...,\ell_r\leq L \\ \text{distinct}}} \left(\#\{n\leq X: f(n+\ell_i)\in \mathscr{P} ~\forall i\} -X\frac{ \fS_{f,\vec \ell}\,(w)}{\log(HX^d)^r}+O\left(X\frac{L^{-1+o_{r,\alpha}(1)}}{\log (HX^d)^r}\right)\right)\right|^2 \\
    \leq  \sum_{r=1}^k L^r\sum_{\substack{1\leq \ell_1,...,\ell_r\leq L \\\text{distinct}}} \left|\#\{n\leq X: f(n+\ell_i)\in \mathscr{P} ~\forall i\} -X\frac{ \fS_{f,\vec \ell}\,(w)}{\log(HX^d)^r}+O\left(X\frac{L^{-1+o_{r,\alpha}(1)}}{\log (HX^d)^r}\right)\right|^2\\
    \ll \sum_{r=1}^k L^r\sum_{\substack{1\leq \ell_1,...,\ell_r\leq L \\\text{distinct}}} \left[\left|\#\{n\leq X: f(n+\ell_i)\in \mathscr{P} ~\forall i\} -X\frac{ \fS_{f,\vec \ell}\,(w)}{\log(HX^d)^r}\right|^2+O\left(X\frac{L^{-1+o_{r,\alpha}(1)}}{\log (HX^d)^r}\right)^2 \right].
\end{multline*}
We now average over $f \in \mathcal{P}(d,H)$.  Lemma~\ref{lem:von-mangoldt-to-primes} lets us bound the the first term's contribution by
$$\ll_{d,\delta,\alpha,k} \sum_{r=1}^k L^r\sum_{\substack{1\leq \ell_1,...,\ell_r\leq L \\\text{distinct}}} \frac{X^2w^{-1+o_k(1)}}{\log(HX^d)^{2r}} \ll_{d,\alpha,k} \frac{X^2}{w^{1-o_k(1)}} \sum_{r=1}^k \left(\frac{L}{\log(HX^d)}\right)^{2r},$$
as desired.
The second term contributes 
$$\ll_k \sum_{r=1}^k L^{2r} \left(X \frac{L^{-1+o_{r,\alpha}(1)}}{\log(HX^d)^r} \right)^2,$$
which is certainly acceptable.
\end{proof}

\subsection{Poisson tail}
Theorem~\ref{thm:M} tells us that, for a suitable range of $L$, the moments $M_k$ typically match the moments of a Poisson distribution.  The Poisson distribution, like the Gaussian, is determined by its moments.  However, before we can deduce Theorem~\ref{thm: Poisson distribution}, we must address three further points: We need to filter out $f$'s with $\fS_f=0$, we need a flexible version of Theorem~\ref{thm:M} that allows $L$ to depend on $f$, and we need to obtain convergence for all moments simultaneously.  The most important of these is the second: Since the value of $L_i$ depends on $f_i$ in the statement of Theorem~\ref{thm: Poisson distribution}, we need a statement like Theorem~\ref{thm:M} for many different values of $L$.  Taking a union bound over all of the possible values of $L$ (corresponding to the possible polynomials $f$) is much too expensive, so instead we will apply Theorem~\ref{thm:M} with a suitable ``net'' of $L$-values.

\begin{proof}[Proof of Theorem~\ref{thm: Poisson distribution}]
For now, fix some $k \in \mathbb{N}$. Lemmas~\ref{lem: lower bound truncated singular series} and~\ref{lem: upper bound on truncated singular series} provide a constant $C=C(d)>0$ such that
$$C^{-1}(\log w)^{1-d}\leq \fS_f(w) \leq C \log w$$
whenever the singular series is nonzero.  These upper and lower bounds differ by a multiplicative factor of $D=D(X):=C^2(\log w)^d$.  Now let $\tau=\tau(X)$ be another slowly-growing function; we will later take $\tau$ to grow slightly faster than $\log D$.  For $0 \leq j \leq \tau$, consider the parameter
$$L(j):=\frac{\mathcal{L}\log(HX^d)}{D^{j/\tau}C^{-1}(\log w)^{1-d}}.$$
The $L(j)$'s are decreasing from $\frac{\mathcal{L}\log(HX^d)}{C^{-1} (\log w)^{1-d}}$ to $\frac{\mathcal{L}\log(HX^d)}{C\log w}$, and consecutive $L(j)$'s have quotient $L(j)/L(j+1)=D^{1/\tau}=1+O((\log D)/\tau)$.

Theorem~\ref{thm:M} with $$w=w(X):=\frac{\delta(X)\log X}{(\log\log X)^2} \leq \frac{\log L(\tau)}{\log\log L(\tau)} \leq \frac{\log L(j)}{\log\log L(j)}$$ gives
\begin{multline*}
\E_{f \in \mathcal{P}(d,H)} \left|M_{k}(\rho_{f,X,L(j)}) -  \sum_{r=1}^k S(k,r)\left(\frac{L(j)\fS_f(w)}{\log(HX^d)}\right)^r\right|^2 \ll_k \frac{1}{w^{1-o(1)}} \sum_{r=1}^k \left(\frac{L(j)}{\log(HX^d)}\right)^{2r}\\
 \ll_k \frac{1}{w^{1-o_k(1)}} \left(1+ \left(\mathcal{L} \cdot C (\log w)^{d-1}\right)^{2k} \right)
 \ll_{k,\mathcal{L}} \frac{1}{w^{1-o_k(1)}} (\log w)^{2k(d-1)}.
\end{multline*}
Markov's Inequality and a union bound over $j$ give
\begin{multline*}
\mathbb{P}_{f \in \mathcal{P}(d,H)}\left[\left|M_{k}(\rho_{f,X,L(j)}) -  \sum_{r=1}^k S(k,r)\left(\frac{L(j)\fS_f(w)}{\log(HX^d)}\right)^r\right| \geq \varepsilon \text{ for some $0 \leq j \leq \tau$}\right]\\
\ll_{\mathcal{L},k} \frac{\tau\varepsilon^{-2}(\log w)^{2k(d-1)}}{w^{1-o_k(1)}},
\end{multline*}
uniformly in $\varepsilon>0$.

Now suppose we pick $f \in \mathcal{P}(d,H)$ uniformly at random subject to the constraint $\fS_f(w) \neq 0$.  Take the $0 \leq j \leq \tau-1$ satisfying
$$\frac{L(j)\fS_f(w)}{\log(HX^d)} \geq \mathcal{L}>\frac{L(j+1)\fS_f(w)}{\log(HX^d)}.$$
Since these upper and lower bounds differ by a multiplicative factor of $(1+O((\log D)/\tau))$, we conclude that they are both $(1+O((\log D)/\tau))\mathcal{L}$.  In particular, we have
$$\sum_{r=1}^k S(k,r)\left(\frac{L(j)\fS_f(w)}{\log(HX^d)}\right)^r, \quad \sum_{r=1}^k S(k,r)\left(\frac{L(j+1)\fS_f(w)}{\log(HX^d)}\right)^r=\sum_{r=1}^k S(k,r)\mathcal{L}^r+O_{\mathcal{L},k}((\log D)/\tau).$$
It follows from the inequality at the end of the previous paragraph that with probability at least
$$1-O_{d,\mathcal{L},k} \left( \frac{\tau\varepsilon^{-2}(\log w)^{2k(d-1)}}{w^{1-o_k(1)}} \right),$$
we have
$$M_{k}(\rho_{f,X,L(j)}), ~ M_{k}(\rho_{f,X,L(j+1)})=\sum_{r=1}^k S(k,r)\mathcal{L}^r+O_{\mathcal{L},k}(\varepsilon+(\log D)/\tau).$$
(For the bound on the error probability, note that $\fS_f(w) \neq 0$ for a positive proportion (depending on $d$) of polynomials $f \in \mathcal{P}(d,H)$, so our conditioning changes the error term by only a constant factor.)
Since
$$L(j+1)< \frac{\mathcal{L}\log(HX^d)}{\fS_f(w)} \leq L(j)$$
and $M_{k}(\rho_{f,X,L})$ is monotone in $L$, this implies that
\begin{align}\label{eq:poisson-moment-single-f}
M_{k}\left(\rho_{f,X;\frac{\mathcal{L}\log(HX^d)}{\fS_f(w)}}\right)=\sum_{r=1}^k S(k,r)\mathcal{L}^r+O_{\mathcal{L},k}(\varepsilon+(\log D)/\tau).
\end{align}
This is nearly the desired conclusion.  It remains to take a sequence of $f$'s and specify $\varepsilon, \tau$.

Let $X_i,H_i,w_i,f_i,L_i$ be as in the statement of Theorem~\ref{thm: Poisson distribution}.  Fix $\varepsilon>0$.  Let $E_i$ denote the event that \eqref{eq:poisson-moment-single-f} fails for $f_i$.  Then
$$\sum_{i=1}^\infty \Pr[E_i] \ll_{d,\mathcal{L},k} \varepsilon^{-2} \sum_{i=1}^\infty \frac{\tau(X_i)(\log w_i)^{2k(d-1)}}{w_i^{1-o_k(1)}}.$$
Notice that $w_i=(\log X_i)^{1-o(1)}$ since $\delta(X)$ grows slowly (in fact we can take it to grow arbitrarily slowly).   Set $\tau(X_i):=\varepsilon^{-1}\log D$, so that the error on the right-hand side of \eqref{eq:poisson-moment-single-f} is $O_{\mathcal{L},k}(\varepsilon)$ and $$\tau(X_i) \ll \varepsilon^{-1} \log\log w_i \ll \varepsilon^{-1} \log\log \log X_i.$$  Now we can bound
$$\sum_{i=1}^\infty \Pr[E_i] \ll_{d,\mathcal{L},k} \varepsilon^{-3} \sum_{i=1}^\infty \frac{(\log\log\log X_i)(\log \log X_i)^{2k(d-1)}}{(\log X_i)^{1-o_k(1)}} \ll_k \varepsilon^{-3} \sum_{i=1}^\infty \frac{1}{(\log X_i)^{1-o_k(1)}},$$
and this sum converges since the sequence $X_i$ grows sufficiently quickly.  The Borel--Canteli Lemma then tells us that with probability $1$, the event $E_i$ occurs for only finitely many $i$'s; in particular, with probability $1$ it will fail for all sufficiently large $i$.  This means that with probability $1$, we have
$$M_{k}\left(\rho_{f_i,X_i,L_i}\right)=\sum_{r=1}^k S(k,r)\mathcal{L}^r+O_{\mathcal{L},k}(\varepsilon)$$
for all sufficiently large $i$.  Since this holds for all $\varepsilon>0$, we have
$$\lim_{i \to \infty}M_{k}\left(\rho_{f_i, X_i,L_i}\right)=\sum_{r=1}^k S(k,r)\mathcal{L}^r$$
with probability $1$.  By a union bound, with probability $1$ this is the case for all $k \in \mathbb{N}$ simultaneously.

Finally, since the Poisson distribution is determined by its moments, we conclude that with probability $1$, the distribution $\rho_{f_i,X_i,L_i}$ converges to a Poisson distribution with expectation $\mathcal{L}$.  The statement of the theorem follows.
\end{proof}

\subsection{Gaussian counts}
The deduction of Theorem~\ref{thm:primes-in-intervals-gaussian} goes similarly to the deduction of Theorem~\ref{thm: Poisson distribution}.  The main difference is that now $\mathcal{L}$ is growing and we must rescale our distributions before taking a limit.  We need the following (standard) identity  relating rescaled central moments to raw moments for the Poisson distribution.  Recall that we write $C_k$ for the $k$-th moment of the Gaussian of mean $0$ and variance $1$.  We use the convention $S(0,0)=1$ and $S(0,r)=0$ for $r>0$.

\begin{lemma}\label{lem:poisson-to-gaussian-conversion}
Set $m_\ell(\lambda):=\sum_{r=0}^\ell S(\ell,r)\lambda^r$ for $\ell \in \Z_{\geq 0}$.  Then for every $k \in \N$, we have the identity
$$\frac{\sum_{\ell=0}^k \binom{k}{\ell} m_\ell(\lambda) (-\lambda)^{k-\ell}}{\lambda^{k/2}}=C_k+O_k(\lambda^{-1/2})$$
as $\lambda \to \infty$.
\end{lemma}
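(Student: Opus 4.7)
The plan is to recognize the expression on the left as the $k$-th standardized central moment of a Poisson$(\lambda)$ random variable, and then derive its asymptotics from the Poisson moment generating function.

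First, I would invoke the classical Touchard (Bell polynomial) identity
\[
m_\ell(\lambda) = \sum_{r=0}^\ell S(\ell,r)\lambda^r = \E[X^\ell] \quad \text{for } X \sim \text{Poisson}(\lambda),
\]
which follows by writing $X^\ell = \sum_r S(\ell,r) X^{\underline r}$ in falling factorials and using the elementary identity $\E[X^{\underline r}] = \lambda^r$ for Poisson. Consequently the numerator in the statement is precisely the binomial expansion of $\E[(X-\lambda)^k]$, and the left-hand side equals $\lambda^{-k/2}\E[(X-\lambda)^k] = \E[Y^k]$, the $k$-th raw moment of the standardized variable $Y := (X-\lambda)/\sqrt{\lambda}$. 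The claim therefore reduces to $\E[Y^k] = C_k + O_k(\lambda^{-1/2})$.

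Next, starting from the Poisson MGF $\E[e^{tX}] = e^{\lambda(e^t-1)}$ and Taylor-expanding $e^{t/\sqrt{\lambda}}$, one computes
\[
\E[e^{tY}] = \exp\!\left(-t\sqrt{\lambda} + \lambda(e^{t/\sqrt{\lambda}}-1)\right) = \exp\!\left(\frac{t^2}{2} + \sum_{j\geq 3}\frac{t^j}{j!\,\lambda^{(j-2)/2}}\right).
\]
Every term in the tail carries a positive power of $\lambda^{-1/2}$, so the exponent equals $t^2/2 + \lambda^{-1/2} R(t,\lambda)$, where $R$ has coefficients uniformly bounded on $|t|\leq 1$, $\lambda \geq 1$. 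Hence $\E[e^{tY}] = e^{t^2/2}(1 + O_k(\lambda^{-1/2}))$ uniformly on the unit disk.

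Finally, since $\E[Y^k]$ (respectively $C_k$) is $k!$ times the $t^k$-coefficient of $\E[e^{tY}]$ (respectively of $e^{t^2/2}$), Cauchy's integral formula on the circle $|t|=1$ converts the uniform bound into $\E[Y^k] - C_k = O_k(\lambda^{-1/2})$, exactly as claimed. The only mild bookkeeping step is verifying uniformity of the MGF expansion in $\lambda$; there is no genuine obstacle here, as the whole argument is really just a Poisson CLT with explicit error tracking.
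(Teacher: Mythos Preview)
Your proof is correct and takes a genuinely different route from the paper's. You recognize the left-hand side as the $k$-th standardized central moment $\E[Y^k]$ of $Y=(X-\lambda)/\sqrt{\lambda}$ with $X\sim\mathrm{Poisson}(\lambda)$ via the Touchard identity, and then read off the asymptotics from the moment generating function, using Cauchy's estimate on the unit circle to pass from a uniform bound on $\E[e^{tY}]-e^{t^2/2}$ to a bound on the Taylor coefficients. The paper instead works purely combinatorially: it sets $\mu_k:=\sum_\ell \binom{k}{\ell}m_\ell(-\lambda)^{k-\ell}$, checks the base cases $\mu_1=0$, $\mu_2=\lambda$, proves the recurrence $\mu_k=\lambda\sum_{t=0}^{k-2}\binom{k-1}{t}\mu_t$ by way of the Stein--Chen-type identity $m_{\ell+1}=\lambda\sum_{s}\binom{\ell}{s}m_s$, and then inducts, matching against the Gaussian recurrence $C_k=(k-1)C_{k-2}$. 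Your argument is more conceptual---it makes the underlying Poisson CLT content explicit---at the cost of invoking a small amount of complex analysis for the coefficient extraction; the paper's argument is entirely elementary and self-contained but leaves the probabilistic interpretation implicit.
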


\begin{proof}
We proceed by induction on $k$.  For convenience, set
$$\mu_k:=\sum_{\ell=0}^k \binom{k}{\ell} m_\ell \cdot (-\lambda)^{k-\ell}$$
for $k \in \N$.  For the base cases, is easy to check that $$\mu_1=m_0 \cdot (-\lambda)+m_1=-\lambda+\lambda=0$$
and
$$\mu_2=m_0 \cdot \lambda^2+2m_1 \cdot (-\lambda)+m_2=\lambda^2-2\lambda^2+(\lambda+\lambda^2)=\lambda,$$
as desired.  We will show inductively that $\mu_k$ satisfies the recurrence
\begin{equation}\label{eq:moments-recurrence}
\mu_k=\lambda \sum_{t=0}^{k-2} \binom{k-1}{t} \mu_t.
\end{equation}
From this recurrence it follows that each $\mu_k$ is a polynomial in $\lambda$, and that the degree of this polynomial is at most $\lfloor k/2 \rfloor$.  Isolating the $t=k-2$ contribution and using the induction hypothesis $\lambda^{-(k-2)/2}\mu_{k-2}=C_{k-2}+O_k(\lambda^{-1/2})$, we find that
\begin{align*}
\lambda^{-k/2}\mu_k &=(k-1)\lambda^{1-k/2} \mu_{k-2}+O_k(\lambda^{1-k/2+\lfloor (k-3)/2 \rfloor})\\
 &=(k-1)(C_{k-2}+O_k(\lambda^{-1/2}))+O_k(\lambda^{-1/2})\\
 &=C_k+O_k(\lambda^{-1/2})
\end{align*}
(since $C_k=(k-1)C_{k-2}$ by definition).  It remains only to establish \eqref{eq:moments-recurrence}.

We need the combinatorial identity
\begin{equation}\label{eq:stein-chen}
m_{\ell+1}=\lambda \sum_{s=0}^{\ell} \binom{\ell}{s}m_s,
\end{equation}
which is just a restatement of the Stein--Chen identity; we include the short proof for the reader's convenience.  Indeed, inserting the definition of $m_s$, we see that we need to show that
$$\sum_{r=0}^{\ell+1} S(\ell+1,r)\lambda^r=\lambda \sum_{s=0}^{\ell} \binom{\ell}{s}\sum_{u=0}^{s} S(s,u)\lambda^u.$$
Gathering like powers of $\lambda$ on the right-hand side, we need to show that
$$S(\ell+1,r)=\sum_{s=r-1}^{\ell} \binom{\ell}{s}S(s,r-1)$$
for all $1 \leq r \leq \ell+1$ (we can ignore $r=0$ since neither side of the previous centered equation has any constant term).  The left-hand side counts the number of ways to partition $[\ell+1]$ into exactly $r$ parts.  The data of such a partition consists of the set $X \subseteq [\ell]$ of elements contained in the same part as the element $\ell+1$, together with a partition of $[\ell] \setminus X$ into exactly $r-1$ parts; this is precisely what the right-hand side counts.  We have now established \eqref{eq:stein-chen}.

Returning to \eqref{eq:moments-recurrence}, we can use Pascal's Identity and \eqref{eq:stein-chen} to expand
\begin{align*}
\mu_k=\sum_{\ell=0}^k \binom{k}{\ell} m_\ell \cdot (-\lambda)^{k-\ell} &=\sum_{\ell=0}^k \left[\binom{k-1}{\ell}+\binom{k-1}{\ell-1} \right] m_\ell \cdot (-\lambda)^{k-\ell}\\
 &=-\lambda \sum_{\ell=0}^{k-1}\binom{k-1}{\ell}m_\ell \cdot (-\lambda)^{k-1-\ell}+\sum_{\ell=0}^{k-1}\binom{k-1}{\ell}m_{\ell+1} \cdot (-\lambda)^{k-\ell-1}\\
 &=-\lambda \mu_{k-1}+\sum_{\ell=0}^{k-1}\binom{k-1}{\ell}\cdot (-\lambda)^{k-1-\ell} \cdot \lambda \sum_{s=0}^{\ell} \binom{\ell}{s}m_s.
\end{align*}
Setting $t:=k-1-\ell+s$ and noting that $$\binom{k-1}{\ell}\binom{\ell}{s}=\binom{k-1}{t}\binom{t}{s},$$
we can eliminate $\ell$ to obtain
\begin{align*}
\mu_k &=-\lambda \mu_{k-1}+\lambda\sum_{t=0}^{k-1} \sum_{s=0}^{t} \binom{k-1}{t}\binom{t}{s} (-\lambda)^{t-s} m_s\\
 &=-\lambda \mu_{k-1}+\lambda \sum_{t=0}^{k-1} \binom{k-1}{t} \mu_t\\
 &=\lambda \sum_{t=0}^{k-2} \binom{k-1}{t} \mu_t,
\end{align*}
as desired.
\end{proof}

\begin{proof}[Proof of Theorem~\ref{thm:primes-in-intervals-gaussian}]
Fix $k \in \mathbb{N}$.  Take $C=C(d), D=D(X), \tau=\tau(X), L(j), w=w(X)$ as in the proof of Theorem~\ref{thm: Poisson distribution}.  Theorem~\ref{thm:M} (with a union bound and keeping track of the dependence on $\mathcal{L}$) again gives
\begin{multline*}
\mathbb{P}_{f \in \mathcal{P}(d,H)}\left[\left|M_{\ell}(\rho_{f,X,L(j)}) -  \sum_{r=1}^\ell S(\ell,r)\left(\frac{L(j)\fS_f(w)}{\log(HX^d)}\right)^r\right| \geq \varepsilon \text{ for some $1 \leq \ell \leq k$ and $0 \leq j \leq \tau$}\right]\\
\ll_{k} \frac{\tau\varepsilon^{-2}(\mathcal{L}\log w)^{2k(d-1)}}{w^{1-o(1)}},
\end{multline*}
uniformly in $\varepsilon>0$.

Now pick $f \in \mathcal{P}(d,H)$ uniformly at random subject to the constraint $\fS_f(w) \neq 0$.  Arguing as in the proof of Theorem~\ref{thm: Poisson distribution} we conclude that with probability at least
$$1-O_{d,k} \left( \frac{\tau\varepsilon^{-2}(\mathcal{L}\log w)^{2k(d-1)}}{w^{1-o(1)}} \right),$$
we have
\begin{align}\label{eq:poisson-moment-single-f-next}
M_{\ell}\left(\rho_{f,X;\frac{\mathcal{L}\log(HX^d)}{\fS_f(w)}}\right)=\sum_{r=1}^\ell S(\ell,r)\mathcal{L}^r+O_{k}(\varepsilon+(\log D)/\tau)
\end{align}
for all $1 \leq \ell \leq k$.  Set $\tau=\tau(X):=\varepsilon^{-1}\log D$.  Consider the centered and rescaled distribution $\mathcal{L}^{-1/2}\left(\rho_{f,X;\frac{\mathcal{L}\log(HX^d)}{\fS_f(w)}}-\mathcal{L}\right)$.  By Lemma~\ref{lem:poisson-to-gaussian-conversion}, its $k$-th moment is
\begin{align*}
\frac{\sum_{\ell=0}^k \binom{k}{\ell} M_\ell\left(\rho_{f,X;\frac{\mathcal{L}\log(HX^d)}{\fS_f(w)}}\right) (-\mathcal{L})^{k-\ell}}{\mathcal{L}^{k/2}} &=\frac{\sum_{\ell=0}^k \binom{k}{\ell} m_\ell(\mathcal{L}) (-\mathcal{L})^{k-\ell}}{\mathcal{L}^{k/2}}+O_{k}\left(\sum_{\ell=1}^k \varepsilon \mathcal{L}^{k/2-\ell} \right)\\
&=C_k+O_k\left(\mathcal{L}^{-1/2}+\varepsilon \mathcal{L}^{k/2-1}\right).
\end{align*}
This error term is comfortably $o_k(1)$ if (say) $\varepsilon=\varepsilon(X):=\mathcal{L}^{-k/2}$.

Take a sequence of $X_i,H_i,w_i,f_i$ as in the statement of Theorem~\ref{thm:primes-in-intervals-gaussian}, and let $E_i$ denote the event that the last centered equation fails for $f_i$.  Then, by the assumption $\log \mathcal{L}(X)=o(\log w(X))$ and the growth rate of $X_i$, we have
\begin{align*}
\sum_{i=1}^\infty \Pr[E_i] &\ll_{d,k} \sum_{i=1}^\infty\frac{\tau(X_i)\varepsilon(X_i)^{-2}(\mathcal{L}(X_i)\log w_i)^{2k(d-1)}}{w_i^{1-o_k(1)}}\\
 &\ll_k \sum_{i=1}^\infty\frac{\varepsilon(X_i)^{-3}\mathcal{L}(X_i)^{2k(d-1)}}{w_i^{1-o_k(1)}}\\
&=\sum_{i=1}^\infty\frac{\mathcal{L}(X_i)^{3k/2+2k(d-1)}}{w_i^{1-o_k(1)}}\\
 &=\sum_{i=1}^\infty\frac{1}{w_i^{1-o_k(1)}}<\infty.
\end{align*}
Apply the Borel--Cantelli Lemma and take a union bound over $k$ as in the proof of Theorem~\ref{thm: Poisson distribution}.  This concludes the proof because the Gaussian distribution is determined by its moments.
\end{proof}

\section*{Acknowledgments}

We would like to thank Kevin Ford, Vivian Kuperberg, James Leng, James Maynard, Efthymios Sofos, and Joni Ter\"av\"ainen for helpful conversations and comments. We are grateful to Efthymios Sofos and Joni Ter\"av\"ainen for showing us the argument for higher moments in the polynomial range mentioned in \S\ref{subsec: related questions}.

The first author was supported in part by the National Science Foundation under grants DGE-203965 and DMS-2501336.  
The second author is partially supported by the NSF under grants DGE-2039656 and DMS-2502864. Any opinions, findings, and conclusions or recommendations expressed in this material are those of the author(s) and do not necessarily reflect the views of the National Science Foundation. The third author is supported by a Simons Junior Fellowship from the Simons Foundation.

\bibliography{biblio}

@book{tao2012higher,
  AUTHOR = {Tao, Terence},
     TITLE = {Higher order {F}ourier analysis},
    SERIES = {Graduate Studies in Mathematics},
    VOLUME = {142},
 PUBLISHER = {American Mathematical Society, Providence, RI},
      YEAR = {2012},
     PAGES = {x+187},
      ISBN = {978-0-8218-8986-2},
   MRCLASS = {11B30 (11L07 11U07 37A45)},
  MRNUMBER = {2931680},
MRREVIEWER = {David Conlon},
       DOI = {10.1090/gsm/142},
       URL = {https://doi-org.stanford.idm.oclc.org/10.1090/gsm/142},
}

@article{kravitz2024quantitative,
  title={Quantitative concatenation for polynomial box norms},
  author={Kravitz, Noah and Kuca, Borys and Leng, James},
  journal={Preprint arXiv:2407.08636},
  year={2024}
}

@incollection{balog1990prime,
  AUTHOR = {Balog, Antal},
     TITLE = {The prime {$k$}-tuplets conjecture on average},
 BOOKTITLE = {Analytic number theory ({A}llerton {P}ark, {IL}, 1989)},
    SERIES = {Progr. Math.},
    VOLUME = {85},
     PAGES = {47--75},
 PUBLISHER = {Birkh\"{a}user Boston, Boston, MA},
      YEAR = {1990},
   MRCLASS = {11P32 (11N25 11P55)},
  MRNUMBER = {1084173},
MRREVIEWER = {D. R. Heath-Brown},
       DOI = {10.1007/978-1-4612-3464-7\_5},
       URL = {https://doi-org.stanford.idm.oclc.org/10.1007/978-1-4612-3464-7_5},
}

@article{gowers2001new,
  AUTHOR = {Gowers, W. T. },
     TITLE = {A new proof of {S}zemer\'{e}di's theorem},
   JOURNAL = {Geom. Funct. Anal.},
  FJOURNAL = {Geometric and Functional Analysis},
    VOLUME = {11},
      YEAR = {2001},
    NUMBER = {3},
     PAGES = {465--588},
      ISSN = {1016-443X},
   MRCLASS = {11B25 (11K38 11K45)},
  MRNUMBER = {1844079},
MRREVIEWER = {Hillel Furstenberg},
       DOI = {10.1007/s00039-001-0332-9},
       URL = {https://doi-org.stanford.idm.oclc.org/10.1007/s00039-001-0332-9},
}

@article{gowers1998new,
  AUTHOR = {Gowers, W. T.},
     TITLE = {A new proof of {S}zemer\'{e}di's theorem for arithmetic
              progressions of length four},
   JOURNAL = {Geom. Funct. Anal.},
  FJOURNAL = {Geometric and Functional Analysis},
    VOLUME = {8},
      YEAR = {1998},
    NUMBER = {3},
     PAGES = {529--551},
      ISSN = {1016-443X},
   MRCLASS = {11B25 (11N13)},
  MRNUMBER = {1631259},
MRREVIEWER = {D. R. Heath-Brown},
       DOI = {10.1007/s000390050065},
       URL = {https://doi-org.stanford.idm.oclc.org/10.1007/s000390050065},
}

@article{kawada1993prime,
  AUTHOR = {Kawada, Koichi},
     TITLE = {The prime {$k$}-tuplets in arithmetic progressions},
   JOURNAL = {Tsukuba J. Math.},
  FJOURNAL = {Tsukuba Journal of Mathematics},
    VOLUME = {17},
      YEAR = {1993},
    NUMBER = {1},
     PAGES = {43--57},
      ISSN = {0387-4982},
   MRCLASS = {11N13 (11N32 11P55)},
  MRNUMBER = {1233111},
MRREVIEWER = {Antal Balog},
       DOI = {10.21099/tkbjm/1496162128},
       URL = {https://doi-org.stanford.idm.oclc.org/10.21099/tkbjm/1496162128},
}

@book{bunjakovskij1854diviseurs,
  title={Sur les diviseurs num{\'e}riques invariables des fonctions rationnelles entieres},
  author={Bunjakovskij, V.},
  year={1854},
  publisher={Imprim{\'e}rie de l'Acad{\'e}mie Imp{\'e}riale des Sciences}
}

@article{helfgott2021expansion,
  title={Expansion, divisibility and parity},
  author={Helfgott, Harald Andres and Radziwi{\l}{\l}, Maksym},
  journal={Preprint arXiv:2103.06853},
  year={2021}
}

@article{green2012inverse,
  title={An inverse theorem for the {G}owers {$U^{s+ 1} [N]$}-norm},
  author={Green, Ben and Tao, Terence and Ziegler, Tamar},
  journal={Annals of Mathematics},
  pages={1231--1372},
  year={2012},
  publisher={JSTOR}
}

@article{hoeffding1994central,
  AUTHOR = {Hoeffding, Wassily and Robbins, Herbert},
     TITLE = {The central limit theorem for dependent random variables},
   JOURNAL = {Duke Math. J.},
  FJOURNAL = {Duke Mathematical Journal},
    VOLUME = {15},
      YEAR = {1948},
     PAGES = {773--780},
      ISSN = {0012-7094},
   MRCLASS = {60.0X},
  MRNUMBER = {26771},
MRREVIEWER = {M. Lo\`eve},
       URL = {http://projecteuclid.org.stanford.idm.oclc.org/euclid.dmj/1077475030},
}

@article {SkorobogatovSofos,
    AUTHOR = {Skorobogatov, Alexei N. and Sofos, Efthymios},
     TITLE = {Schinzel hypothesis on average and rational points},
   JOURNAL = {Invent. Math.},
  FJOURNAL = {Inventiones Mathematicae},
    VOLUME = {231},
      YEAR = {2023},
    NUMBER = {2},
     PAGES = {673--739},
      ISSN = {0020-9910},
   MRCLASS = {11N32 (14G05)},
  MRNUMBER = {4542704},
MRREVIEWER = {James Maynard},
       DOI = {10.1007/s00222-022-01153-6},
       URL = {https://doi.org/10.1007/s00222-022-01153-6},
}

@article {Wilson,
    AUTHOR = {Wilson, Cameron},
     TITLE = {Higher moments for polynomial {C}howla},
   JOURNAL = {Int. Math. Res. Not.},
  FJOURNAL = {International Mathematics Research Notices. IMRN},
      YEAR = {2025},
    NUMBER = {8},
     PAGES = {Paper No. rnaf097, 11},
      ISSN = {1073-7928},
   MRCLASS = {11N37},
  MRNUMBER = {4894179},
MRREVIEWER = {Joni Ter\"{a}v\"{a}inen},
       DOI = {10.1093/imrn/rnaf097},
       URL = {https://doi-org.stanford.idm.oclc.org/10.1093/imrn/rnaf097},
}

@article {BalestrieriRome,
    AUTHOR = {Balestrieri, Francesca and Rome, Nick},
     TITLE = {Average {B}ateman-{H}orn for {K}ummer polynomials},
   JOURNAL = {Acta Arith.},
  FJOURNAL = {Acta Arithmetica},
    VOLUME = {207},
      YEAR = {2023},
    NUMBER = {4},
     PAGES = {315--350},
      ISSN = {0065-1036},
   MRCLASS = {11N32 (11N36 11N37 11P55 14G05)},
  MRNUMBER = {4591254},
MRREVIEWER = {Liangyi Zhao},
       DOI = {10.4064/aa220921-20-2},
       URL = {https://doi.org/10.4064/aa220921-20-2},
}

@article{BrowningSofosTeravainen,
      title={{Bateman-Horn, polynomial Chowla and the Hasse principle with probability 1}}, 
      author={Tim Browning and Efthymios Sofos and Joni Teräväinen},
      year={2022},
        journal = {Preprint arXiv:2212.10373},
      eprint={2212.10373},
      archivePrefix={arXiv},
      primaryClass={math.NT},
      url={https://arxiv.org/abs/2212.10373}, 
}

@article {BatemanHorn,
    AUTHOR = {Bateman, Paul T. and Horn, Roger A.},
     TITLE = {A heuristic asymptotic formula concerning the distribution of
              prime numbers},
   JOURNAL = {Math. Comp.},
  FJOURNAL = {Mathematics of Computation},
    VOLUME = {16},
      YEAR = {1962},
     PAGES = {363--367},
      ISSN = {0025-5718},
   MRCLASS = {10.42},
  MRNUMBER = {148632},
MRREVIEWER = {A. Schinzel},
       DOI = {10.2307/2004056},
       URL = {https://doi.org/10.2307/2004056},
}

@article{Leng,
      title={Efficient Equidistribution of Nilsequences}, 
      author={James Leng},
      year={2024},
        journal = {Preprint arXiv:2312.10772},
      eprint={2312.10772},
      archivePrefix={arXiv},
      primaryClass={math.NT},
      url={https://arxiv.org/abs/2312.10772}, 
}

@article {GT2,
    AUTHOR = {Green, Ben and Tao, Terence},
     TITLE = {Linear equations in primes},
   JOURNAL = {Ann. of Math. (2)},
  FJOURNAL = {Annals of Mathematics. Second Series},
    VOLUME = {171},
      YEAR = {2010},
    NUMBER = {3},
     PAGES = {1753--1850},
      ISSN = {0003-486X},
   MRCLASS = {11N13 (11B30 11P32)},
  MRNUMBER = {2680398},
MRREVIEWER = {Tamar Ziegler},
       DOI = {10.4007/annals.2010.171.1753},
       URL = {https://doi.org/10.4007/annals.2010.171.1753},
}

@article {GT3,
    AUTHOR = {Green, Ben and Tao, Terence},
     TITLE = {The quantitative behaviour of polynomial orbits on
              nilmanifolds},
   JOURNAL = {Ann. of Math. (2)},
  FJOURNAL = {Annals of Mathematics. Second Series},
    VOLUME = {175},
      YEAR = {2012},
    NUMBER = {2},
     PAGES = {465--540},
      ISSN = {0003-486X},
   MRCLASS = {37A15},
  MRNUMBER = {2877065},
MRREVIEWER = {Tamar Ziegler},
       DOI = {10.4007/annals.2012.175.2.2},
       URL = {https://doi.org/10.4007/annals.2012.175.2.2},
}

@article {GT4,
    AUTHOR = {Green, Ben and Tao, Terence},
     TITLE = {The {M}\"{o}bius function is strongly orthogonal to nilsequences},
   JOURNAL = {Ann. of Math. (2)},
  FJOURNAL = {Annals of Mathematics. Second Series},
    VOLUME = {175},
      YEAR = {2012},
    NUMBER = {2},
     PAGES = {541--566},
      ISSN = {0003-486X},
   MRCLASS = {37A45 (11A25)},
  MRNUMBER = {2877066},
MRREVIEWER = {Tamar Ziegler},
       DOI = {10.4007/annals.2012.175.2.3},
       URL = {https://doi.org/10.4007/annals.2012.175.2.3},
}

@article {Gallagher,
    AUTHOR = {Gallagher, P. X.},
     TITLE = {On the distribution of primes in short intervals},
   JOURNAL = {Mathematika},
  FJOURNAL = {Mathematika. A Journal of Pure and Applied Mathematics},
    VOLUME = {23},
      YEAR = {1976},
    NUMBER = {1},
     PAGES = {4--9},
      ISSN = {0025-5793},
   MRCLASS = {10H15 (10H25 10K20)},
  MRNUMBER = {409385},
MRREVIEWER = {W. Schwarz},
       DOI = {10.1112/S0025579300016442},
       URL = {https://doi-org.stanford.idm.oclc.org/10.1112/S0025579300016442},
}

@article {MontgomerySound,
    AUTHOR = {Montgomery, Hugh L. and Soundararajan, Kannan},
     TITLE = {Primes in short intervals},
   JOURNAL = {Comm. Math. Phys.},
  FJOURNAL = {Communications in Mathematical Physics},
    VOLUME = {252},
      YEAR = {2004},
    NUMBER = {1-3},
     PAGES = {589--617},
      ISSN = {0010-3616},
   MRCLASS = {11N05},
  MRNUMBER = {2104891},
MRREVIEWER = {Tsz Ho Chan},
       DOI = {10.1007/s00220-004-1222-4},
       URL = {https://doi-org.stanford.idm.oclc.org/10.1007/s00220-004-1222-4},
}

@incollection {FunkhouserGoldstonLedoan,
    AUTHOR = {Funkhouser, Scott and Goldston, Daniel A. and Ledoan, Andrew
              H.},
     TITLE = {Distribution of large gaps between primes},
 BOOKTITLE = {Irregularities in the distribution of prime numbers},
     PAGES = {45--67},
 PUBLISHER = {Springer, Cham},
      YEAR = {2018},
   MRCLASS = {11N05},
  MRNUMBER = {3822615},
MRREVIEWER = {Tristan Freiberg},
}

@article{leng2023efficient,
  title={Efficient equidistribution of periodic nilsequences and applications},
  author={Leng, James},
  journal={Preprint arXiv:2306.13820},
  year={2023}
}

@article{leng2024quasipolynomial,
  title={{Quasipolynomial bounds on the inverse theorem for the Gowers $ U^{s+ 1}[N]$-norm}},
  author={Leng, James and Sah, Ashwin and Sawhney, Mehtaab},
  journal={Preprint arXiv:2402.17994},
  year={2024}
}

@article {Maynard__Gaps,
    AUTHOR = {Maynard, James},
     TITLE = {Small gaps between primes},
   JOURNAL = {Ann. of Math. (2)},
  FJOURNAL = {Annals of Mathematics. Second Series},
    VOLUME = {181},
      YEAR = {2015},
    NUMBER = {1},
     PAGES = {383--413},
      ISSN = {0003-486X},
   MRCLASS = {11N05 (11N36)},
  MRNUMBER = {3272929},
MRREVIEWER = {Ya Ming Lu},
       DOI = {10.4007/annals.2015.181.1.7},
       URL = {https://doi.org/10.4007/annals.2015.181.1.7},
}

@article {Zhang__gaps,
    AUTHOR = {Zhang, Yitang},
     TITLE = {Bounded gaps between primes},
   JOURNAL = {Ann. of Math. (2)},
  FJOURNAL = {Annals of Mathematics. Second Series},
    VOLUME = {179},
      YEAR = {2014},
    NUMBER = {3},
     PAGES = {1121--1174},
      ISSN = {0003-486X},
   MRCLASS = {11N05 (11N36)},
  MRNUMBER = {3171761},
MRREVIEWER = {S. W. Graham},
       DOI = {10.4007/annals.2014.179.3.7},
       URL = {https://doi.org/10.4007/annals.2014.179.3.7},
}

@article{polymath__gaps,
      title={The ``bounded gaps between primes'' {P}olymath project - a retrospective}, 
      author={D. H. J. Polymath},
      year={2014},
    journal = {Preprint arXiv:1409.8361},
      eprint={1409.8361},
      archivePrefix={arXiv},
      primaryClass={math.HO},
      url={https://arxiv.org/abs/1409.8361}, 
}

@article {TaoTeravainen,
    AUTHOR = {Tao, Terence and Ter\"{a}v\"{a}inen, Joni},
     TITLE = {Quantitative bounds for {G}owers uniformity of the {M}\"{o}bius
              and von {M}angoldt functions},
   JOURNAL = {J. Eur. Math. Soc. (JEMS)},
  FJOURNAL = {Journal of the European Mathematical Society (JEMS)},
    VOLUME = {27},
      YEAR = {2025},
    NUMBER = {4},
     PAGES = {1321--1384},
      ISSN = {1435-9855},
   MRCLASS = {11B30 (11N37 37A44)},
  MRNUMBER = {4875606},
       DOI = {10.4171/jems/1404},
       URL = {https://doi.org/10.4171/jems/1404},
}

@book{Ch65,
  author    = {Chowla, S.},
  title     = {The Riemann hypothesis and Hilbert's tenth problem},
  series    = {Mathematics and its Applications},
  volume    = {4},
  publisher = {Gordon and Breach Science Publishers},
  address   = {New York-London-Paris},
  year      = {1965},
}

@article{MRT16,
  AUTHOR = {Matom\"{a}ki, Kaisa and Radziwi\l\l , Maksym and Tao, Terence},
     TITLE = {Sign patterns of the {L}iouville and {M}\"{o}bius functions},
   JOURNAL = {Forum Math. Sigma},
  FJOURNAL = {Forum of Mathematics. Sigma},
    VOLUME = {4},
      YEAR = {2016},
     PAGES = {Paper No. e14, 44},
   MRCLASS = {11N25 (11N37 11N64)},
  MRNUMBER = {3513734},
MRREVIEWER = {Greg Martin},
       DOI = {10.1017/fms.2016.6},
       URL = {https://doi-org.stanford.idm.oclc.org/10.1017/fms.2016.6},
}

@article{Hil86,
AUTHOR = {Hildebrand, Adolf},
     TITLE = {On consecutive values of the {L}iouville function},
   JOURNAL = {Enseign. Math. (2)},
  FJOURNAL = {L'Enseignement Math\'{e}matique. Revue Internationale. 2e S\'{e}rie},
    VOLUME = {32},
      YEAR = {1986},
    NUMBER = {3-4},
     PAGES = {219--226},
      ISSN = {0013-8584},
   MRCLASS = {11N37 (11K65)},
  MRNUMBER = {874689},
MRREVIEWER = {Ming Gao Lu},
}

@article{MRTTZ23,
  AUTHOR = {Matom\"{a}ki, Kaisa and Radziwi\l\l, Maksym and Tao, Terence and
              Ter\"{a}v\"{a}inen, Joni and Ziegler, Tamar},
     TITLE = {Higher uniformity of bounded multiplicative functions in short
              intervals on average},
   JOURNAL = {Ann. of Math. (2)},
  FJOURNAL = {Annals of Mathematics. Second Series},
    VOLUME = {197},
      YEAR = {2023},
    NUMBER = {2},
     PAGES = {739--857},
      ISSN = {0003-486X},
   MRCLASS = {11N37 (11B30 37A44)},
  MRNUMBER = {4543441},
MRREVIEWER = {Peter Shiu},
       DOI = {10.4007/annals.2023.197.2.3},
       URL = {https://doi-org.stanford.idm.oclc.org/10.4007/annals.2023.197.2.3},
}

@article{KSX23,
    AUTHOR = {Klurman, Oleksiy and Shkredov, Ilya D. and Xu, Max Wenqiang},
     TITLE = {On the random {C}howla conjecture},
   JOURNAL = {Geom. Funct. Anal.},
  FJOURNAL = {Geometric and Functional Analysis},
    VOLUME = {33},
      YEAR = {2023},
    NUMBER = {3},
     PAGES = {749--777},
      ISSN = {1016-443X},
   MRCLASS = {11K65 (11N37 60F05 60F15)},
  MRNUMBER = {4597641},
MRREVIEWER = {Joni Ter\"{a}v\"{a}inen},
       DOI = {10.1007/s00039-023-00641-y},
       URL = {https://doi-org.stanford.idm.oclc.org/10.1007/s00039-023-00641-y},
}

@article {FGKT:large_gaps,
    AUTHOR = {Ford, Kevin and Green, Ben and Konyagin, Sergei and Tao,
              Terence},
     TITLE = {Large gaps between consecutive prime numbers},
   JOURNAL = {Ann. of Math. (2)},
  FJOURNAL = {Annals of Mathematics. Second Series},
    VOLUME = {183},
      YEAR = {2016},
    NUMBER = {3},
     PAGES = {935--974},
      ISSN = {0003-486X},
   MRCLASS = {11N05 (11N36)},
  MRNUMBER = {3488740},
MRREVIEWER = {A. Perelli},
       DOI = {10.4007/annals.2016.183.3.4},
       URL = {https://doi-org.stanford.idm.oclc.org/10.4007/annals.2016.183.3.4},
}

@article {FGKMT:large_gaps,
    AUTHOR = {Ford, Kevin and Green, Ben and Konyagin, Sergei and Maynard,
              James and Tao, Terence},
     TITLE = {Long gaps between primes},
   JOURNAL = {J. Amer. Math. Soc.},
  FJOURNAL = {Journal of the American Mathematical Society},
    VOLUME = {31},
      YEAR = {2018},
    NUMBER = {1},
     PAGES = {65--105},
      ISSN = {0894-0347},
   MRCLASS = {11N05 (05C65 05C70 11N36)},
  MRNUMBER = {3718451},
MRREVIEWER = {Karin Halupczok},
       DOI = {10.1090/jams/876},
       URL = {https://doi-org.stanford.idm.oclc.org/10.1090/jams/876},
}

@article {Teravainen:Chowla,
    AUTHOR = {Ter\"{a}v\"{a}inen, Joni},
     TITLE = {On the {L}iouville function at polynomial arguments},
   JOURNAL = {Amer. J. Math.},
  FJOURNAL = {American Journal of Mathematics},
    VOLUME = {146},
      YEAR = {2024},
    NUMBER = {4},
     PAGES = {1115--1167},
      ISSN = {0002-9327},
   MRCLASS = {11N37 (11B30)},
  MRNUMBER = {4775032},
MRREVIEWER = {Peter Shiu},
}

@article {MRT:averaged_Chowla,
    AUTHOR = {Matom\"{a}ki, Kaisa and Radziwi\l\l , Maksym and Tao, Terence},
     TITLE = {An averaged form of {C}howla's conjecture},
   JOURNAL = {Algebra Number Theory},
  FJOURNAL = {Algebra \& Number Theory},
    VOLUME = {9},
      YEAR = {2015},
    NUMBER = {9},
     PAGES = {2167--2196},
      ISSN = {1937-0652},
   MRCLASS = {11P32},
  MRNUMBER = {3435814},
MRREVIEWER = {Martin Mereb},
       DOI = {10.2140/ant.2015.9.2167},
       URL = {https://doi.org/10.2140/ant.2015.9.2167},
}

@article {Tao:log_chowla,
    AUTHOR = {Tao, Terence},
     TITLE = {The logarithmically averaged {C}howla and {E}lliott
              conjectures for two-point correlations},
   JOURNAL = {Forum Math. Pi},
  FJOURNAL = {Forum of Mathematics. Pi},
    VOLUME = {4},
      YEAR = {2016},
     PAGES = {e8, 36},
   MRCLASS = {11N37},
  MRNUMBER = {3569059},
MRREVIEWER = {Y.-F. S. P\'{e}termann},
       DOI = {10.1017/fmp.2016.6},
       URL = {https://doi.org/10.1017/fmp.2016.6},
}

@article {TT:log_Chowla,
    AUTHOR = {Tao, Terence and Ter\"{a}v\"{a}inen, Joni},
     TITLE = {The structure of logarithmically averaged correlations of
              multiplicative functions, with applications to the {C}howla
              and {E}lliott conjectures},
   JOURNAL = {Duke Math. J.},
  FJOURNAL = {Duke Mathematical Journal},
    VOLUME = {168},
      YEAR = {2019},
    NUMBER = {11},
     PAGES = {1977--2027},
      ISSN = {0012-7094},
   MRCLASS = {11N37 (11A25 37A45)},
  MRNUMBER = {3992031},
MRREVIEWER = {Olivier Bordell\`es},
       DOI = {10.1215/00127094-2019-0002},
       URL = {https://doi.org/10.1215/00127094-2019-0002},
}

@article {Sawin:sign_patterns,
    AUTHOR = {Sawin, Will},
     TITLE = {Dynamical models for {L}iouville and obstructions to further
              progress on sign patterns},
   JOURNAL = {J. Number Theory},
  FJOURNAL = {Journal of Number Theory},
    VOLUME = {213},
      YEAR = {2020},
     PAGES = {1--15},
      ISSN = {0022-314X},
   MRCLASS = {37A44 (11N37 11N64)},
  MRNUMBER = {4091952},
MRREVIEWER = {Anh N. Le},
       DOI = {10.1016/j.jnt.2020.01.012},
       URL = {https://doi.org/10.1016/j.jnt.2020.01.012},
}

@article{MRSTT:higher_uniformity,
      title={Higher uniformity of arithmetic functions in short intervals {II}. {A}lmost all intervals}, 
      author={Kaisa Matomäki and Maksym Radziwiłł and Xuancheng Shao and Terence Tao and Joni Teräväinen},
      year={2024},
    journal = {Preprint arXiv:2411.05770},
      eprint={2411.05770},
      archivePrefix={arXiv},
      primaryClass={math.NT},
      url={https://arxiv.org/abs/2411.05770}, 
}

@article {Walsh:local_phases,
    AUTHOR = {Walsh, Miguel N.},
     TITLE = {Stability under scaling in the local phases of multiplicative
              functions},
   JOURNAL = {Invent. Math.},
  FJOURNAL = {Inventiones Mathematicae},
    VOLUME = {241},
      YEAR = {2025},
    NUMBER = {1},
     PAGES = {325--362},
      ISSN = {0020-9910},
   MRCLASS = {11N37 (11K65 11M26)},
  MRNUMBER = {4921576},
       DOI = {10.1007/s00222-025-01343-y},
       URL = {https://doi.org/10.1007/s00222-025-01343-y},
}

@article{Walsh:phase_pyramids,
      title={Phase relations and pyramids}, 
      author={Miguel N. Walsh},
      year={2023},
    journal = {Preprint arXiv:2304.09792},
      eprint={2304.09792},
      archivePrefix={arXiv},
      primaryClass={math.NT},
      url={https://arxiv.org/abs/2304.09792}, 
}

@article {SawinShusterman:polynomial_chowla,
    AUTHOR = {Sawin, Will and Shusterman, Mark},
     TITLE = {M\"{o}bius cancellation on polynomial sequences and the quadratic
              {B}ateman-{H}orn conjecture over function fields},
   JOURNAL = {Invent. Math.},
  FJOURNAL = {Inventiones Mathematicae},
    VOLUME = {229},
      YEAR = {2022},
    NUMBER = {2},
     PAGES = {751--927},
      ISSN = {0020-9910},
   MRCLASS = {11N37 (11R58)},
  MRNUMBER = {4448995},
MRREVIEWER = {Antonio Rojas Le\'{o}n},
       DOI = {10.1007/s00222-022-01115-y},
       URL = {https://doi-org.stanford.idm.oclc.org/10.1007/s00222-022-01115-y},
}

@article{MTW:Siegel_correction,
      title={Quantitative asymptotics for polynomial patterns in the primes}, 
      author={Lilian Matthiesen and Joni Teräväinen and Mengdi Wang},
      year={2024},
    journal = {Preprint arXiv:2405.12190},
      eprint={2405.12190},
      archivePrefix={arXiv},
      primaryClass={math.NT},
      url={https://arxiv.org/abs/2405.12190}, 
}

@article{van36,
  author = {van der Waerden, Bartel Leendert},
  title = {Die Seltenheit der reduziblen Gleichungen und der Gleichungen mit Affekt},
  journal = {Monatshefte für Mathematik und Physik},
  volume = {43},
  pages = {133--147},
  year = {1936},
  language = {German}
}

@ARTICLE{Pilatte,
       author = {{Pilatte}, C{\'e}dric},
        title = "{Improved bounds for the two-point logarithmic Chowla conjecture}",
      journal = {Preprint arXiv:2310.19357},
     keywords = {Mathematics - Number Theory, Mathematics - Combinatorics},
         year = 2023,
          eid = {arXiv:2310.19357},
          doi = {10.48550/arXiv.2310.19357},
archivePrefix = {arXiv},
       eprint = {2310.19357},
 primaryClass = {math.NT},
       adsurl = {https://ui.adsabs.harvard.edu/abs/2023arXiv231019357P},
      adsnote = {Provided by the SAO/NASA Astrophysics Data System}
}

@article {MSTT:higher_uniformity,
    AUTHOR = {Matom\"{a}ki, Kaisa and Shao, Xuancheng and Tao, Terence and
              Ter\"{a}v\"{a}inen, Joni},
     TITLE = {Higher uniformity of arithmetic functions in short intervals
              {I}. {A}ll intervals},
   JOURNAL = {Forum Math. Pi},
  FJOURNAL = {Forum of Mathematics. Pi},
    VOLUME = {11},
      YEAR = {2023},
     PAGES = {Paper No. e29, 97},
   MRCLASS = {11N37 (11B30)},
  MRNUMBER = {4658200},
MRREVIEWER = {Peter Shiu},
       DOI = {10.1017/fmp.2023.28},
       URL = {https://doi-org.stanford.idm.oclc.org/10.1017/fmp.2023.28},
}
\bibliographystyle{plain}
\end{document}